\newtheorem{theorem}{Theorem}[section]
\newtheorem{lemma}[theorem]{Lemma}
\newtheorem{proposition}[theorem]{Proposition}
\newtheorem{corollary}[theorem]{Corollary}
\theoremstyle{definition}
\newtheorem{remark}[theorem]{Remark}
\newtheorem{remarks}[theorem]{Remarks}
\numberwithin{equation}{section}
\renewcommand{\div}{\mathrm{div}\,}    
\newcommand{\lpso}{L^p_{\overline{\sigma}}(\Omega)}
\providecommand{\norm}[1]{\lVert#1\rVert} 
\providecommand{\abs}[1]{\lvert#1\rvert} 
\newcommand{\R}{\mathbb{R}}
\newcommand{\Z}{\mathbb{Z}}
\newcommand{\N}{\mathbb{N}}
\newcommand{\cT}{{\mathcal T}}
\newcommand\restr[2]{{
  \left.\kern-\nulldelimiterspace 
  #1 
  \vphantom{\big|} 
  \right|_{#2} 
  }}
\title{Analyticity of solutions to the primitive equations}
\subjclass[2010]{Primary: 35Q35; Secondary: 76D03, 47D06, 86A05.}
\keywords{Maximal $L^q$ regularity, Primitive Equations, Global strong well-posedness, regularity of solutions\\
This work was partly supported by the DFG International Research Training Group IRTG 1529 and the JSPS Japanese-German Graduate Externship on Mathematical Fluid Dynamics. 
The first author is partly supported by JSPS through grant Kiban S (No. 26220702), Kiban A (No. 17H01091), Kiban B (No. 16H03948) and the second and fourth author are supported by IRTG 1529 at TU Darmstadt.}
\author[Giga]{Yoshikazu Giga} 
\address{Graduate School of Mathematical Sciences, University of Tokyo, Komaba 3-8-1, Meguro-ku, Tokyo, 153-8914, Japan }
\email{labgiga@ms.u-tokyo.ac.jp}
\author[Gries]{Mathis Gries} 
\address{Departement of Mathematics,
	TU Darmstadt, Schlossgartenstr. 7, 64289 Darmstadt, Germany}
\email{gries@mathematik.tu-darmstadt.de}
\author[Hieber]{Matthias Hieber} 
\address{Departement of Mathematics,
	TU Darmstadt, Schlossgartenstr. 7, 64289 Darmstadt, Germany}
\email{hieber@mathematik.tu-darmstadt.de}
\author[Hussein]{Amru Hussein} 
\address{Departement of Mathematics,
	TU Darmstadt, Schlossgartenstr. 7, 64289 Darmstadt, Germany}
\email{hussein@mathematik.tu-darmstadt.de}
\author[Kashiwabara]{Takahito Kashiwabara}
\address{Graduate School of Mathematical Sciences, The University of Tokyo, 3-8-1 Komaba, Meguro, Tokyo 153-8914, Japan}
\email{tkashiwa@ms.u-tokyo.ac.jp}
\begin{document}

\begin{abstract}
\noindent
This article presents the maximal regularity approach to the primitive equations. It is proved that the $3D$ primitive equations on cylindrical domains 
admit a unique, global strong solution for initial data lying in  the critical solonoidal Besov space $B^{2/p}_{pq}$ for $p,q\in (1,\infty)$ with $1/p+1/q \leq 1$. This solution
regularize instantaneously and  becomes even real analytic for $t>0$. 
\end{abstract}

\maketitle

\section{Introduction}

The primitive equations for the ocean and atmosphere are considered to be a fundamental model for geophysical flows which is derived from Navier-Stokes equations 
assuming a hydrostatic balance for the pressure term in the vertical direction.  
The mathematical analysis of the primitive equations commenced by Lions, Teman and Wang in the series of articles \cite{Lionsetall1992, Lionsetall1992_b, Lionsetall1993}; for a survey of 
known results and further references to the literature, we refer to the recent article by  Li and Titi \cite{LiTiti2016}. 

In contrast to Navier-Stokes equations, the $3D$ primitive equations admit a unique, global, strong solution for arbitrary large data in $H^1$. This breakthrough result was proved 
by Cao and Titi \cite{CaoTiti2007} in 2007 using energy methods.  A different approach to the  primitive equations, based on methods of evolution equations, has been presented  
in \cite{HieberKashiwabara2015}. There a Fujita-Kato type iteration scheme was developed in addition to  $H^2$-{a priori} bounds for the solution. 

It is the aim of this article to present a third approach to the primitive equations, this time based on techniques from the theory of maximal $L^q$-regularity. This approach has 
several advantages compared to the two other approaches.  Let us note first that combining this approach with the so-called parameter-trick due to Angenent \cite{Angenent1990_2, Angenent1990}  
we are able to rigorously prove the immediate smoothing effect of solution. In particular, the solution regularizes instantaneously to become 
{\em real analytic in time and space}, a property,  which is interesting for its own sake. 

Real analyticity of solutions of certain classes of partial differential equations is 
usually difficult to prove directly, however, our approach allows to employ the implicit function theorem and thus yields an elegant strategy  for solving this problem.    
For first results in this direction concerning  the Navier-Stokes equations, we refer  to the work of Masuda \cite{Masuda1980}; for the general theory  concerning quasilinear systems 
and refinements, we refer to \cite{PruessSimonett2016}. Let us remark that $C^\infty$-smoothness properties  of the solutions to the primitive equations have also been obtained by 
Li and Titi in \cite{LiTiti2015} by very different methods. 

The above  regularizing effect plays an important role when extending local solutions to global ones by means of certain {a priori} bounds. So far, in order to control the 
existence time in $L^p$-spaces, $H^2$-{a priori} bounds have been used in \cite{HieberKashiwabara2015, HieberKashiwabaraHussein2016}. In the following, we show that {a priori} bounds 
in the maximal regularity space $L^2(0,T;H^2)\cap H^1(0,T;L^2)$ are already sufficient to prove the global existence of a solution in  $L^q$-$L^p$-spaces. Smoothing properties of the solution 
play also a very  important  role in the proof of the recent results on the existence of global, strong  solutions to the primitive equations for rough initial data lying in the 
anisotropic and scaling invariant spaces $L^{\infty}(L^1)$ and $L^{\infty}(L^p)$; see \cite{GigaGriesHusseinHieberKashiwabara2017NN}.

Second, our approach allows to prove the existence and uniqueness of a global, strong  solution for initial values lying in critical spaces, which in the given situation are the  Besov spaces 
\begin{align*}
B^{\mu}_{pq} \quad \hbox{for} \quad p,q\in (1,\infty)\quad \hbox{with} \quad 1/p+1/q\leq \mu \leq 1. 
\end{align*}
Here, we use in an essential way the concept of time weights for maximal $L^p$-regularity, see \cite{PruessSimonett2016} and  \cite{PruessWilke2016}. The above spaces seem to be the 
largest spaces of initial data for which one obtains the existence of a unique, global strong solution to the primitive equations when considering the problem within the 
$L^q-L^p$-framework with $1<p,q<\infty$.      

Note that the above  spaces are critical function spaces, where by critical is understood in the sense discussed e.g. in \cite{PruessSimonettWilke2017}. These spaces correspond in the situation 
of the Navier-Stokes equations to the critical function spaces $B^{n/p-1}_{pq}$ introduced by Cannone \cite{Can97} for the full space  case $\R^n$, and by Pr\"uss and Wilke 
\cite{PruessWilke2016} for bounded domains. Also, for other solution classes for the $3$-d Navier-Stokes equations initial conditions in Besov spaces occur such as in the works by Farwig, Giga and Hsu \cite{Farwigetall2016, Farwigetall2017, Farwigetall2017b} which investigate solutions which are continuous in time and taking values in the class of Besov spaces $B_{p,q}^{3/p-1}$ for suitable coefficients $p,q$ including the case $q=\infty$.



Choosing in particular $p=q=2$ and $\mu=1$ and noting that $B^{1}_{22}=H^1$, we rediscover in particular the celebrated result by Cao and Titi \cite{CaoTiti2007}.
Furthermore, choosing  $p,q>2$ allows us to enlarge the space of admissible initial values $H^{2/p,p}$ as constructed in \cite{HieberKashiwabara2015, HieberKashiwabaraHussein2016}
to the above more general Besov space setting since $H^{2/p,p} \subset B^{2/p}_{pq}$. This class of initial values is also used in our works 
\cite{GigaGriesHusseinHieberKashiwabara2017DN} on rough initial data lying in $L^{\infty}(L^p)$ for the case of mixed Dirichlet and Neumann boundary conditions. There, the solutions obtained in this article  serve as reference solutions belonging to initial values in 
$B^{\mu}_{pq}$ and where parameters are chosen in such a way  that $B^{\mu}_{pq}\hookrightarrow C^1$.

Third, our approach allows us to treat various types boundary conditions, such as  Dirichlet, Neumann, and mixed Dirichlet and Neumann boundary conditions in a unified way. The corresponding 
$L^2(0,T;H^2)\cap H^1(0,T;L^2)$ a priori bounds  for the case of mixed Dirichlet and Neumann boundary conditions rely on results obtained in 
\cite{HieberKashiwabara2015, GaldiHieberKashiwabara2015} and for Dirichlet boundary conditions these bounds can be obtained similarly.  
For the remaining case of pure Neumann boundary conditions, we present a proof of these bounds in Section~\ref{sec:proofs}.  We hence obtain the existence of a unique, global, strong solution to the 
primitive equations for all of these boundary conditions.

This article is organized as follows: In Section~\ref{sec:pre} we describe the setting in detail and the main results are presented in Section~\ref{sec:main}. 
Some information to the linear theory is recapped and supplemented in Section~\ref{sec:lin}. We collect the  relevant results on maximal $L^q$-regularity in Section~\ref{sec:maxreg}; they will be then applied in the  proofs of our  main results given in Section~\ref{sec:proofs}. Finally, the various approaches are compared  in Section~\ref{sec:concludingremark}.

{\bf Acknowledgment}. The authors would like to thank Jan Pr\"uss and Mathias Wilke for making their recent article \cite{PruessWilke2016} available to us prior to publication.

\section{Preliminaries}\label{sec:pre}

Consider a cylindrical domain $\Omega =  G \times (-h,0) \subset \R^3$ with $ G=(0,1)\times(0,1)$, $h>0$. For simplicity, we investigate  the primitive equations in the isothermal setting 
and denote by $v\colon\Omega \rightarrow \R^2$ the vertical velocity of the fluid and $\pi_s\colon G \rightarrow \R$ its surface pressure. There exist  several equivalent 
formulations of the primitive equations, depending on whether the horizontal velocity $w=w(v)$ is completely substituted by   the vertical velocity $v$ and the full pressure 
by the surface pressure, respectively, compare e.g. \cite{HieberKashiwabara2015}. For the purpose of this article the following representation of the primitive equations is the most 
convenient, 
\begin{align}\label{eq:primequiv}
\left\{
\begin{array}{rll}
\partial_t v + v \cdot \nabla_H v + w(v) \cdot \partial_z v - \Delta v + \nabla_H \pi_s  & = f, &\text{ in } \Omega \times (0,T),  \\
\mathrm{div}_H \overline{v} & = 0, &\text{ in } \Omega \times (0,T), \\
v(0) & = v_0, &\text{ in } \Omega, 
\end{array}\right.
\end{align}
where denoting by $x,y\in G$ horizontal coordinates and by $z\in (-h,0)$ the vertical one, we use the notations
\begin{align*}
\Delta = \partial_x^2 + \partial_y^2+ \partial_z^2, \quad \nabla_H  = (\partial_x, \partial_y)^T, \quad \div_H v= \partial_x v_1+ \partial_y v_2 \quad \hbox{and} \quad \overline{v}:=\frac{1}{h}\int_{-h}^0 v(\cdot,\cdot, \xi)d\xi.
\end{align*}
Here  the horizontal velocity $w=w(v)$ is given by
\begin{eqnarray*}
w(v)(x,y,z) = -\int_{-h}^z \mathrm{div}_H v(x,y, \xi)d\xi, \quad \hbox{where} \quad w(x,y,-h)=w(x,y,0)=0.
\end{eqnarray*}
The equations \eqref{eq:primequiv} are supplemented by the mixed boundary conditions on
\begin{align*}
\Gamma_{u} = G \times \{0\}, \quad \Gamma_b = G \times \{-h\} \quad \hbox{and} \quad \Gamma_l = \partial G \times (-h,0),
\end{align*}
i.e. the upper, bottom and lateral parts of the boundary $\partial\Omega$, respectively,
given by
\begin{align}\label{eq:bc}
v, \pi_s  \hbox{ are periodic } \hbox{on } \Gamma_l \times (0,\infty), \nonumber \\
v = 0 \hbox{ on } \Gamma_D \times (0,\infty) \quad \hbox{and} \quad \partial_z v = 0  \hbox{ on } \Gamma_N \times (0,\infty).
\end{align}
where Dirichlet, Neumann and mixed boundary conditions are comprised by the notation
\begin{align*}
\Gamma_D \in \{\emptyset, \Gamma_{u}, \Gamma_{b}, \Gamma_{u}\cup \Gamma_{b}\}\quad  \hbox{and} \quad \Gamma_N = (\Gamma_{u}\cup \Gamma_{b})\setminus \Gamma_D.
\end{align*}
In the literature several sets of boundary conditions are considered. So, in \cite[Equation (1.37) and (1.37)']{Lionsetall1992} Dirichlet and mixed Dirichlet Neumann boundary conditions 
are considered, respectively, while in \cite{CaoTiti2007} Neumann boundary conditions are assumed. 

Similarly to the Navier-Stokes equations, one may consider \textit{hydrostatically  solenoidal vector fields} as a subspace of $L^p(\Omega)^2$ for $p\in (1,\infty)$ which, following the approach developed 
in \cite[Sections 3 and 4]{HieberKashiwabara2015} is defined by 
\begin{align}\label{eq:lpso}
L^p_{\overline{\sigma}}(\Omega) &= \overline{\{v\in C^{\infty}_{per}(\overline{\Omega})^2 : \div_H \overline{v} = 0\}}^{\norm{\cdot}_{L^{p}(\Omega)^2}},
\end{align}
where horizontal periodicity is modeled by the function spaces $C^{\infty}_{per}(\overline{\Omega})$ and $C^{\infty}_{per}(\overline{G})$ is defined as in 
\cite[Section 2]{HieberKashiwabara2015}, where smooth functions are periodic only with respect to $x,y$ coordinates and not necessarily in the $z$ coordinate.

Furthermore, there exists a continuous projection $P_p$, called the \textit{hydrostatic Helmholtz projection}, from $L^p(\Omega)^2$ onto $\lpso$, see \cite{GigaGriesHusseinHieberKashiwabara2016} and \cite{HieberKashiwabara2015} for details. In particular, $P_p$ annihilates the pressure term $\nabla_H\pi_s$.

For $p\in(1,\infty)$ and $s\in [0,\infty)$ define the spaces
\begin{align*}
H^{s,p}_{per}(\Omega) := \overline{C^{\infty}_{per}(\overline{\Omega})}^{\norm{\cdot}_{H^{s,p}(\Omega)}} \quad \hbox{and} \quad
H^{s,p}_{per}(G) := \overline{C^{\infty}_{per}(\overline{G})}^{\norm{\cdot}_{H^{s,p}(G)}},
\end{align*}
where $H^{0,p}_{per}:= L^p$. Here $H^{s,p}(\Omega)$ denotes the Bessel potential spaces, which are defined as restrictions of Bessel potential spaces on the  whole space to $\Omega$, 
compare e.g \cite[Definition 3.2.2.]{Triebel}. It is well known that the space $H^{s,p}(\Omega)$ coincides with the  classical Sobolev space $W^{m,p}(\Omega)$ provided $s=m \in \N$.

Also, we define for $p,q\in(1,\infty)$ and $s\in [0,\infty)$ the Besov spaces
\begin{align*}
B^{s}_{pq,per}(\Omega) := \overline{C^{\infty}_{per}(\overline{\Omega})}^{\norm{\cdot}_{B^{s}_{pq}(\Omega)}} \quad \hbox{and} \quad
B^{s}_{pq,per}(G) := \overline{C^{\infty}_{per}(\overline{G})}^{\norm{\cdot}_{B^{s}_{pq}(G)}},
\end{align*}
where $B^{s}_{pq}$ denotes Besov spaces, which are defined as restrictions of Besov spaces on the whole space $B^{s}_{p,q}(\R^3)$, compare e.g. \cite[Definitions 3.2.2]{Triebel}.

Following \cite{HieberKashiwabara2015}, we define the \textit{hydrostatic Stokes operator} $A_p$ in $\lpso$ as
\begin{align*}
A_p v := P_p \Delta v, \quad D(A_p) :=  \{v\in H_{per}^{2,p}(\Omega)^2 : \restr{\partial_z v}{\Gamma_N} = 0, \,\restr{v}{\Gamma_D} = 0 \} \cap L^p_{\overline{\sigma}}(\Omega).
\end{align*}

In \cite{GigaGriesHusseinHieberKashiwabara2016} it has been shown that $A_p$ has the property of maximal $L^q$-regularity. Following \cite[Theorem 3.2]{PruessSimonett2004} this is 
equivalent to maximal $L^q$-regularity of $A_p$ in time-weighted spaces, which are defined for $\mu \in (1/q,1]$ and for $k\in \N$ recursively by 
\begin{align*}
L^{q}_{\mu}(J; D(A_p)) &= \{ v\in L^{1}_{loc}(J; D(A_p)) \colon t^{1-\mu} v \in L^q(J; D(A_p))\},\\
H^{1,q}_{\mu}(J; \lpso) &= \{ v\in L^{q}_{\mu}(J; \lpso) \cap H^{1,1}(J; \lpso) \colon t^{1-\mu} v_t \in L^q(J; \lpso)\},\\
\vdots
\\
H^{k+1,q}_{\mu}(J; \lpso) &= \{ v\in H^{k,q}_{\mu}(J; \lpso) \colon v_t \in H^{k,q}(J; \lpso)\}.
\end{align*}
Here $v_t$ stands for the time derivative of $v$ 
and $J=(0,T)$ denotes for  $0<T\leq \infty$ a time interval.

The natural trace spaces of these spaces are determined  by real interpolation $(\cdot, \cdot)_{\theta,q}$ for $\theta \in (0,1)$ and $p,q\in (1,\infty)$. They can be  computed explicitly 
in terms of Besov spaces, as we will prove  in Section~\ref{sec:lin}.

\begin{lemma}\label{lem:real}
	Let $\theta \in (0,1)$ and $p,q\in (1,\infty)$. Then for $X_{\theta,q}:= (\lpso, D(A_p))_{\theta,q}$ it holds that
	\begin{align*}
	X_{\theta,q} =\begin{cases}
	\{ v\in  B^{2\theta}_{pq, per}(\Omega) \cap \lpso \colon \restr{\partial_z v}{\Gamma_N} = 0, \,\restr{v}{\Gamma_D} = 0 \}, & \tfrac{1}{2} + \tfrac{1}{2p} < \theta < 1, \\
	\{ v\in  B^{2\theta}_{pq, per}(\Omega) \cap \lpso \colon  \restr{v}{\Gamma_D} = 0 \}, & \tfrac{1}{2p} < \theta < \tfrac{1}{2} + \tfrac{1}{2p}, \\ 
	B^{2\theta}_{pq, per}(\Omega) \cap \lpso, & 0 < \theta < \tfrac{1}{2p}. 
	\end{cases}
	\end{align*}
\end{lemma}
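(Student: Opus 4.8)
The plan is to reduce the claim to the known real interpolation spaces of the scalar Laplacian with the prescribed boundary conditions, and then to transport the result through the hydrostatic Helmholtz projection $P_p$. Let $\Delta_\Gamma$ denote the Laplacian on $L^p(\Omega)$ with $D(\Delta_\Gamma)=\{u\in H^{2,p}_{per}(\Omega):\restr{\partial_z u}{\Gamma_N}=0,\ \restr{u}{\Gamma_D}=0\}$, and let $B_p:=\diag(\Delta_\Gamma,\Delta_\Gamma)$ act componentwise on $L^p(\Omega)^2$, so that $D(A_p)=D(B_p)\cap\lpso$ by definition. By the classical theory of interpolation of Sobolev spaces subject to boundary conditions (see, e.g., \cite{Triebel}), applied componentwise,
\begin{align*}
(L^p(\Omega)^2,D(B_p))_{\theta,q}=
\begin{cases}
\{v\in B^{2\theta}_{pq,per}(\Omega)^2:\restr{\partial_z v}{\Gamma_N}=0,\ \restr{v}{\Gamma_D}=0\}, & \tfrac12+\tfrac1{2p}<\theta<1,\\[2pt]
\{v\in B^{2\theta}_{pq,per}(\Omega)^2:\restr{v}{\Gamma_D}=0\}, & \tfrac1{2p}<\theta<\tfrac12+\tfrac1{2p},\\[2pt]
B^{2\theta}_{pq,per}(\Omega)^2, & 0<\theta<\tfrac1{2p},
\end{cases}
\end{align*}
where the excluded thresholds $\theta=\tfrac1{2p}$ and $\theta=\tfrac12+\tfrac1{2p}$ are precisely the exponents at which the Dirichlet, respectively the Neumann, trace ceases to be well defined -- hence their omission in the statement. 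Recall next from \cite{HieberKashiwabara2015,GigaGriesHusseinHieberKashiwabara2016} that $P_p$ is a bounded projection of $L^p(\Omega)^2$ onto $\lpso$ which also acts boundedly on $H^{s,p}_{per}(\Omega)^2$ and on $B^{s}_{pq,per}(\Omega)^2$ for $s\ge 0$, and whose corrector $P_pv-v=-\nabla_H\phi$ is independent of the vertical variable, with $\Delta_H\phi=\div_H\overline v$ on $G$ under periodic boundary conditions; in particular $\partial_z(P_pv)=\partial_z v$, and $\overline{P_pv}$ is the two-dimensional Helmholtz projection of $\overline v$. Since $P_p$ acts as the identity on $D(A_p)\subset\lpso$, it is a retraction for the couple $(L^p(\Omega)^2,D(A_p))$, so that $X_{\theta,q}=(\lpso,D(A_p))_{\theta,q}=\lpso\cap(L^p(\Omega)^2,D(A_p))_{\theta,q}$; combining this with the display above, the lemma reduces to the identity
\begin{align*}
\lpso\cap(L^p(\Omega)^2,D(A_p))_{\theta,q}=\lpso\cap(L^p(\Omega)^2,D(B_p))_{\theta,q}.
\end{align*}

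The inclusion $\subseteq$ is immediate from $D(A_p)\subseteq D(B_p)$. The reverse inclusion is the heart of the matter, and the difficulty is that $P_p$ does \emph{not} map $D(B_p)$ into itself: although its corrector is $z$-independent -- so the Neumann condition is automatically preserved -- the function $\nabla_H\phi$ need not vanish on $\Gamma_D$, so one cannot simply transport the couple $(L^p(\Omega)^2,D(B_p))$ by $P_p$. I would instead argue at the level of the $K$-functional. Given $v\in\lpso\cap(L^p(\Omega)^2,D(B_p))_{\theta,q}$ -- so $\div_H\overline v=0$, hence $P_pv=v$ and $\phi(\overline v)=0$ -- and a near-optimal $w\in D(B_p)$ for $K(t,v;L^p(\Omega)^2,D(B_p))$, put $\tilde w:=P_pw-\ell$, where $\ell$ is a linear lifting of the boundary defect $g:=\restr{(P_pw)}{\Gamma_D}=-\restr{\nabla_H\phi(\overline w)}{\Gamma_D}$ chosen with $\restr{\ell}{\Gamma_D}=g$, $\restr{\partial_z\ell}{\Gamma_N}=0$ and $\overline\ell=0$, by means of a single extension operator bounded both as $B^{-1/p}_{pp}(\Gamma_D)\to L^p(\Omega)^2$ and as $B^{2-1/p}_{pp}(\Gamma_D)\to H^{2,p}(\Omega)^2$. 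Then $\tilde w\in D(A_p)$: the Neumann condition is inherited because $\partial_z(P_pw)=\partial_z w$ while $\partial_z\ell$ vanishes on $\Gamma_N$, the Dirichlet condition holds by construction, and $\div_H\overline{\tilde w}=0$ because $\overline{P_pw}$ is two-dimensionally solenoidal and $\overline\ell=0$. Using the $L^p$-boundedness of $P_p$ and of $w\mapsto\nabla_H\phi(\overline w)$, the elliptic estimate $\|\nabla_H\phi(\overline w)\|_{H^{2,p}}\lesssim\|\overline w\|_{H^{2,p}}\lesssim\|w\|_{D(B_p)}$, and the identity $\phi(\overline w)=\phi(\overline{w-v})$ (valid since $\phi(\overline v)=0$) together with the $z$-independence of $\nabla_H\phi$, one obtains $\|g\|_{B^{2-1/p}_{pp}}\lesssim\|w\|_{D(B_p)}$ and $\|g\|_{B^{-1/p}_{pp}}\lesssim\|v-w\|_{L^p}$, whence $\|v-\tilde w\|_{L^p}\lesssim\|v-w\|_{L^p}$ and $t\|\tilde w\|_{D(A_p)}\lesssim t\|w\|_{D(B_p)}$. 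Passing to the infimum over $w$ yields $K(t,v;L^p(\Omega)^2,D(A_p))\lesssim K(t,v;L^p(\Omega)^2,D(B_p))$, hence $v\in(L^p(\Omega)^2,D(A_p))_{\theta,q}$, as desired.

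Two routine facts remain to be recorded for the above: that $\lpso=\{v\in L^p(\Omega)^2:\div_H\overline v=0\}$ -- no normal trace on $\partial G$ intervening, because of the horizontal periodicity -- and that the periodic closures $H^{s,p}_{per}$ and $B^{s}_{pq,per}$ are compatible with the trace operators onto $\Gamma_D$ and $\Gamma_N$; both are standard within the function-space framework of \cite{HieberKashiwabara2015}. I expect the genuinely delicate point to be the $K$-functional step, that is, the interplay between the Dirichlet condition and the vertical-average constraint. If one prefers to build more directly on the companion paper, this step may instead be carried out by invoking the bounded imaginary powers of $A_p$ together with the description of the fractional power domains $D(A_p^{\alpha})$ from \cite{GigaGriesHusseinHieberKashiwabara2016}, and then using reiteration.
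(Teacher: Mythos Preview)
Your argument is correct and follows a genuinely different route from the paper. The paper's proof extends functions in $D(A_p)$ to the full three-torus by odd and even reflection across $\Gamma_D$ and $\Gamma_N$, invokes the known real interpolation spaces on the torus (Triebel), and restricts back; the compatibility of the solenoidal constraint with this reflection is not spelled out but is deferred to the retraction/co-retraction machinery of \cite[Section~4]{HieberKashiwabaraHussein2016}. Your approach instead first identifies $(L^p(\Omega)^2,D(B_p))_{\theta,q}$ classically and then shows that intersecting with $\lpso$ reproduces $X_{\theta,q}$, handling the failure of $P_p$ to preserve the Dirichlet condition by a $K$-functional correction. What the paper's method buys is brevity, exploiting the flat cylindrical geometry; what yours buys is an explicit treatment of the interaction between the solenoidal constraint and the Dirichlet boundary, and it would transfer more readily to geometries where reflection is unavailable.

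One technical point: your appeal to an extension operator bounded as $B^{-1/p}_{pp}(\Gamma_D)\to L^p(\Omega)^2$ is not standard for $p>2$, since such extensions typically land in $B^0_{pp}(\Omega)$ rather than in $L^p(\Omega)$. This is easily bypassed by exploiting the $z$-independence of the defect. Take the explicit lifting $\ell(x,y,z):=-\chi(z)\,\nabla_H\phi(\overline{w-v})(x,y)$ with $\chi\in C^\infty[-h,0]$ chosen so that $\chi=1$ at the $z$-levels of $\Gamma_D$, $\chi'=0$ at the $z$-levels of $\Gamma_N$, and $\int_{-h}^0\chi\,dz=0$ (such $\chi$ exists for each admissible choice of $\Gamma_D,\Gamma_N$). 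Then $\overline{\ell}=0$, the boundary conditions are met, and the tensor structure gives directly
\[
\|\ell\|_{L^p(\Omega)}\lesssim\|\nabla_H\phi(\overline{w-v})\|_{L^p(G)}\lesssim\|v-w\|_{L^p(\Omega)},\qquad
\|\ell\|_{H^{2,p}(\Omega)}\lesssim\|\nabla_H\phi(\overline{w})\|_{H^{2,p}(G)}\lesssim\|w\|_{D(B_p)},
\]
which is exactly what your $K$-functional comparison needs. With this modification the argument is complete.
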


\section{Main Results}\label{sec:main} 

Our first main result is the global, strong well-posedness of the primitive equations for arbitrarily large data in the critical Besov spaces defined above in Lemma \ref{lem:real}.   

\begin{theorem}[Global well-posedness]\label{thm:glob}
Let $p,q\in (1,\infty)$ such that $1/p + 1/q\leq 1$. For $0<T<\infty$ let $ \mu \in  [1/p + 1/q,1]$,
    \begin{align*}
    v_0\in X_{\mu-1/q,q} \qquad \hbox{and} \qquad P_p f \in H^{1,q}_{\mu}(0,T; \lpso) \cap  H^{1,2}(\delta,T; L^{2}_{\overline{\sigma}}(\Omega)) 
    \end{align*}	
	for some $\delta>0$ sufficiently small. 
	
	Then there exists a unique, strong solution $v$ to the primitive equations~\eqref{eq:primequiv} satisfying 
	\begin{align*}
	v\in H^{1,q}_{\mu}(0,T;\lpso) \cap L^{q}_{\mu}(0,T;D(A_p)). 
	\end{align*}
\end{theorem}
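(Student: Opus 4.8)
The plan is to recast the projected form of \eqref{eq:primequiv} as a semilinear evolution equation driven by the hydrostatic Stokes operator, solve it locally by the time‑weighted maximal $L^q$‑regularity machinery, and then use parabolic smoothing together with an $L^2$--$H^2$ a priori bound to upgrade the local solution to the global one. Applying the hydrostatic Helmholtz projection $P_p$ to \eqref{eq:primequiv} and using that $P_p$ annihilates $\nabla_H\pi_s$ while $A_pv=P_p\Delta v$, the system becomes
\begin{align*}
v_t - A_p v = F(v) + P_p f, \qquad v(0)=v_0, \qquad F(v):=-P_p\bigl(v\cdot\nabla_H v + w(v)\,\partial_z v\bigr).
\end{align*}
By \cite{GigaGriesHusseinHieberKashiwabara2016} the operator $A_p$ has maximal $L^q$‑regularity on $\lpso$, hence by \cite[Theorem 3.2]{PruessSimonett2004} also maximal $L^q_\mu$‑regularity on the time‑weighted spaces recalled above for $\mu$ in the admissible range $(1/q,1]$, and the corresponding trace space at $t=0$ is precisely $X_{\mu-1/q,q}$, to which $v_0$ belongs (note $\mu\ge 1/p+1/q>1/q$).

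For local well‑posedness I would fix a short interval $(0,T_0)$ and run a contraction argument in $\mathbb{E}_\mu(0,T_0):=H^{1,q}_\mu(0,T_0;\lpso)\cap L^q_\mu(0,T_0;D(A_p))$. The decisive ingredient is a bilinear estimate of the form
\begin{align*}
\bigl\|F(v)-F(\tilde v)\bigr\|_{L^q_\mu(0,T_0;\lpso)} \le C\,\bigl(\|v\|_{\mathbb{E}_\mu}+\|\tilde v\|_{\mathbb{E}_\mu}\bigr)\,\|v-\tilde v\|_{\mathbb{E}_\mu},
\end{align*}
obtained from the mixed‑derivative theorem (embedding $\mathbb{E}_\mu$ into spaces of the type $H^{\theta,q}_\mu(0,T_0;H^{2(1-\theta),p})$), Sobolev embeddings of $B^{2(\mu-1/q)}_{pq,per}(\Omega)$, the special structure of $w(v)$ (only horizontal derivatives of $v$ enter, paired with $\partial_z v$), and Hölder's inequality in time tracking the weight $t^{1-\mu}$. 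The hypotheses $1/p+1/q\le 1$ and $\mu\ge 1/p+1/q$ are exactly what make this estimate hold in the critical case $2(\mu-1/q)=2/p$, with no slack. Combined with maximal $L^q_\mu$‑regularity, a standard fixed‑point argument (a contraction on a small ball around the solution of the linearized problem, valid for $T_0$ small) yields a unique maximal solution $v$ on some interval $(0,T_{\max})$ with $T_{\max}\in(0,T]$.

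Next I would exploit parabolic smoothing. Since the weight $t^{1-\mu}$ is harmless away from $t=0$, on every $(s,T_{\max})$ with $s>0$ the solution lies in the unweighted class $H^{1,q}(s,T_{\max};\lpso)\cap L^q(s,T_{\max};D(A_p))$; iterating maximal regularity (the nonlinearity $F$ gains regularity once $v$ does) together with the improved integrability of the hydrostatic Stokes semigroup shows that $v$ is smooth on $(0,T_{\max})$, so in particular $v(\delta')\in D(A_2)\cap L^2_{\overline{\sigma}}(\Omega)$ for any $\delta'\in(\delta,T_{\max})$. On $(\delta',T_{\max})$ one then invokes the a priori bound in $L^2(\delta',T_{\max};H^2)\cap H^1(\delta',T_{\max};L^2)$: for the Dirichlet and mixed Dirichlet--Neumann cases this follows from \cite{HieberKashiwabara2015,GaldiHieberKashiwabara2015}, and for the pure Neumann case it is proved in Section~\ref{sec:proofs}, the hypothesis $P_pf\in H^{1,2}(\delta,T;L^2_{\overline{\sigma}}(\Omega))$ being precisely what the energy method needs. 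This bound depends only on $\|v(\delta')\|_{H^1}$, the norm of $P_pf$, and $T$, and is in particular uniform in $T_{\max}$; by the trace theorem in the $L^2$‑setting (Lemma~\ref{lem:real} with $p=2$, $\theta=1/2$) it gives $\sup_{\delta'<t<T_{\max}}\|v(t)\|_{H^1}<\infty$.

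Finally, global existence follows by contradiction: if $T_{\max}<T$, then, using that the energy‑method local existence in the $L^2$--$H^2$ framework has an existence time depending only on the $H^1$‑norm of the datum, the uniform bound just obtained lets us solve on $(t_1,t_1+\tau)$ for some $t_1<T_{\max}$ with $t_1+\tau>T_{\max}$ and $\tau>0$ fixed; uniqueness of strong solutions (an $L^2$‑energy argument at the $H^1$ level) glues this to $v$, extending it beyond $T_{\max}$ and contradicting maximality. Hence $T_{\max}=T$; a further application of parabolic smoothing on $(\delta',T)$ upgrades the regularity there to $H^{1,q}$--$D(A_p)$, so that combining with the original class on $(0,2\delta')$ one obtains $v\in H^{1,q}_\mu(0,T;\lpso)\cap L^q_\mu(0,T;D(A_p))$ as claimed. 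I expect the main obstacle to be the critical bilinear estimate for $F$ in the time‑weighted space — in particular controlling the term $w(v)\,\partial_z v$, which is borderline and must be handled through the anisotropic structure — together, secondarily, with establishing the $L^2$--$H^2$ a priori bound under pure Neumann boundary conditions.
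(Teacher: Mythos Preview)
Your overall strategy --- local existence via time-weighted maximal regularity, parabolic smoothing into the $L^2$ setting, invoking the $L^2$--$H^2$ a priori bounds, then deducing global existence --- is the same as the paper's. The execution, however, diverges at the global step, and there your contradiction argument has a real gap: extending the $L^2$ solution beyond $T_{\max}$ does not by itself contradict maximality of $T_{\max}$ in the $\mathbb{E}_\mu$-class. You would still need to upgrade the $L^2$ extension to $H^{1,q}_\mu\cap L^q_\mu(D(A_p))$ near $T_{\max}$, and the ``further application of parabolic smoothing'' you invoke for this is precisely the direction (from $L^2$ to $L^p$, and for $p$ large $H^1$ does not embed into $X_{\gamma,\mu}$) that the tools you cite do not cover. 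The paper avoids this entirely: it applies the abstract criterion Theorem~\ref{thm:globex}, which only asks for a bound $v\in C_b([\tau,t_+(v_0));X_{\gamma,\overline{\mu}})$ in some \emph{strictly finer} trace space together with compactness of $X_{\gamma,\overline{\mu}}\hookrightarrow X_{\gamma,\mu}$, and then concludes $t_+(v_0)=T$ directly in the $L^p$--$L^q$ class.

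To feed Theorem~\ref{thm:globex} the paper needs more than the $H^1$ bound you extract from the a priori estimate; it needs $v\in C_b((\delta,T);D(A_2))$, and this is where both extra regularity hypotheses on $f$ are actually used. The assumption $P_pf\in H^{1,q}_\mu(0,T;\lpso)$ enters via the parameter trick (Theorem~\ref{thm:time}(a)) to get $v\in C((\delta,T');D(A_p))$ on the local interval, so that $v(\delta')$ is an admissible initial value for the $L^2$ theory (your ``iterating maximal regularity'' is too vague here and never calls on this hypothesis). The assumption $P_pf\in H^{1,2}(\delta,T;L^2_{\overline{\sigma}})$ then enters not in the energy method --- Theorem~\ref{thm: a priori estimate} needs only $P_2f\in L^2(0,T;L^2)$ --- but in Proposition~\ref{thm:globl2}(b), which lifts the global $L^2$ solution from $C_b(H^1)$ to $C_b(D(A_2))$ via the time regularity $t\,v_t\in H^1(L^2)\cap L^2(D(A_2))$. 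With $D(A_2)\hookrightarrow X_{\gamma,\overline{\mu}}$ for suitable $\overline{\mu}>\mu$, Theorem~\ref{thm:globex} then closes the argument.
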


Considering in particular the case $p=q=2$, we are not only reproducing the known global existence result \cite{CaoTiti2007}, but state furthermore that additional time regularity of the 
forcing term improves the regularity of these solutions.

\begin{proposition}\label{thm:globl2}
Let $0<T<\infty$ and $v_0\in \{H^{1} \cap L^{2}_{\overline{\sigma}}(\Omega) \colon \restr{v}{\Gamma_D} = 0\}$.
\begin{itemize}
	\item[(a)] If $P_2 f \in L^2(0,T; L^{2}_{\overline{\sigma}}(\Omega))$, then there exists a  unique, strong solution $v$ to the primitive equations~\eqref{eq:primequiv} in
	\begin{align*}
	v\in H^{1}(0,T;L^{2}_{\overline{\sigma}}(\Omega))) \cap L^{2}(0,T;D(A_2)). 
	\end{align*}
	\item[(b)] If in addition $t\mapsto t\cdot P_2 f_t(t) \in L^2(0,T; L^{2}_{\overline{\sigma}}(\Omega))$, then 
	\begin{align*}
	t\cdot v_t\in H^{1}(0,T;L^{2}_{\overline{\sigma}}(\Omega))) \cap L^{2}(0,T;D(A_2)). 
	\end{align*}
\end{itemize}
\end{proposition}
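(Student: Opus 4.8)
The proof of~(a) is the by-now standard maximal-regularity argument, and for~(b) the plan is to apply Angenent's parameter (scaling) trick \cite{Angenent1990,Angenent1990_2}. Throughout, write the primitive equations~\eqref{eq:primequiv} in the abstract form
\begin{equation}\label{eq:abstract}
\partial_t v + A_2 v = F(v) + P_2 f, \qquad v(0) = v_0,
\end{equation}
with the bilinear nonlinearity $F(v) := -P_2\big(v\cdot\nabla_H v + w(v)\,\partial_z v\big)$, and set $\EE_1 := H^{1}(0,T;L^{2}_{\overline{\sigma}}(\Omega)) \cap L^{2}(0,T;D(A_2))$ and $\EE_0 := L^{2}(0,T;L^{2}_{\overline{\sigma}}(\Omega))$. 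By Lemma~\ref{lem:real} with $p=q=2$ and $\theta=\tfrac12$, the trace space of $\EE_1$ is $X_{1/2,2}=\{v\in H^{1}\cap L^{2}_{\overline{\sigma}}(\Omega):\restr{v}{\Gamma_D}=0\}$, which is exactly the space in which the initial value is assumed to lie, and $\EE_1\hookrightarrow C([0,T];X_{1/2,2})$. For~(a) one combines the maximal $L^2$-regularity of $A_2$ on $L^{2}_{\overline{\sigma}}(\Omega)$ \cite{GigaGriesHusseinHieberKashiwabara2016} with the anisotropic Sobolev and interpolation estimates underlying the local theory of the primitive equations (\cf \cite{CaoTiti2007,HieberKashiwabara2015}), which give for the quadratic map $F$ a bound of the form
\[
\|F(v)-F(\tilde v)\|_{\EE_0(0,\tau)}\le C\big(\|v\|_{\EE_1(0,\tau)},\|\tilde v\|_{\EE_1(0,\tau)}\big)\,\eta(\tau)\,\|v-\tilde v\|_{\EE_1(0,\tau)},\qquad \eta(\tau)\to 0\ \text{as}\ \tau\to 0,
\]
once the initial trace is fixed; a contraction argument then yields a unique local solution in $\EE_1$. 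To reach $[0,T]$ for arbitrary $T<\infty$ one invokes the a priori bound $\|v\|_{L^2(0,T;H^2)\cap H^1(0,T;L^2)}\le C(T,v_0,f)$ — which is precisely the $\EE_1$-norm — obtained by Cao--Titi type energy estimates for Dirichlet, Neumann and mixed conditions (\cite{CaoTiti2007,HieberKashiwabara2015,GaldiHieberKashiwabara2015}; the pure Neumann case is treated in Section~\ref{sec:proofs}); since the solution then stays in a bounded subset of $X_{1/2,2}$, a standard continuation argument gives existence on $[0,T]$, and uniqueness follows from the Lipschitz estimate above together with Gronwall's inequality.

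For~(b), let $v$ be the solution from~(a) and $g:=P_2 f$. For $\lambda$ in a neighbourhood $U$ of $1$ set $v_\lambda(t):=v(\lambda t)$; by the chain rule, $v_\lambda$ solves
\begin{equation}\label{eq:scaled}
\partial_t v_\lambda + \lambda A_2 v_\lambda = \lambda F(v_\lambda) + \lambda\, g(\lambda\,\cdot),\qquad v_\lambda(0)=v_0 .
\end{equation}
I would then consider $\Phi:U\times\EE_1\to\EE_0\times X_{1/2,2}$ given by $\Phi(\lambda,u):=\big(\partial_t u+\lambda A_2 u-\lambda F(u)-\lambda\,g(\lambda\,\cdot),\ u(0)-v_0\big)$, so that $\Phi(1,v)=0$, and apply the implicit function theorem at $(1,v)$. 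The map $\Phi$ is $C^1$ near $(1,v)$: the $u$-dependence is affine plus the smooth quadratic $F$, while for the $\lambda$-dependence the only nontrivial point is the differentiability of $\lambda\mapsto g(\lambda\,\cdot)\in\EE_0$ at $\lambda=1$, with derivative $t\mapsto t\,g_t(t)$ — and this derivative lies in $\EE_0$ exactly by the extra hypothesis $t\mapsto t\,P_2f_t(t)\in L^{2}(0,T;L^{2}_{\overline{\sigma}}(\Omega))$ of~(b).

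The remaining point is that $D_u\Phi(1,v)\,w=\big(\partial_t w+A_2 w-F'(v)w,\ w(0)\big)$ is an isomorphism of $\EE_1$ onto $\EE_0\times X_{1/2,2}$. The unperturbed map $w\mapsto(\partial_t w+A_2 w,\,w(0))$ is an isomorphism by maximal $L^2$-regularity, and $w\mapsto F'(v)w$ is a lower-order perturbation obeying $\|F'(v)w\|_{\EE_0(0,\tau)}\le C(\|v\|_{\EE_1})\,\eta(\tau)\,\|w\|_{\EE_1(0,\tau)}$ whenever $w(0)=0$, with $\eta(\tau)\to 0$. Since the frozen-coefficient problem is linear, invertibility on all of $[0,T]$ follows by solving on short subintervals of uniform length (on each one reducing, via the unperturbed solution, to zero initial trace so that the smallness of $\eta(\tau)$ applies) and patching, uniqueness being again a Gronwall argument. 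The implicit function theorem then provides a $C^1$ curve $\lambda\mapsto\hat v_\lambda\in\EE_1$ with $\hat v_1=v$ and $\Phi(\lambda,\hat v_\lambda)=0$; by uniqueness for~\eqref{eq:scaled} (which reduces to~(a) after rescaling time), $\hat v_\lambda=v_\lambda=v(\lambda\,\cdot)$. Differentiating $\lambda\mapsto\hat v_\lambda\in\EE_1$ at $\lambda=1$ yields
\[
\EE_1\ni\frac{d}{d\lambda}\Big|_{\lambda=1}\hat v_\lambda(t)=\frac{d}{d\lambda}\Big|_{\lambda=1}v(\lambda t)=t\,v_t(t),
\]
that is, $t\,v_t\in H^{1}(0,T;L^{2}_{\overline{\sigma}}(\Omega))\cap L^{2}(0,T;D(A_2))$, which is the claim of~(b).

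The main obstacle is the quantitative control of the primitive-equations nonlinearity — the anisotropic Sobolev and interpolation estimates for $v\cdot\nabla_H v+w(v)\,\partial_z v$ that bound $F$ and $F'(v)$ into $\EE_0$ — together with the fact that the perturbation $F'(v)$ is not small on the whole interval $[0,T]$, so the isomorphism property of $D_u\Phi(1,v)$ has to be obtained by a continuation/Gronwall argument exploiting the linearity of the frozen-coefficient problem rather than by a single contraction. A secondary technical point, handled by a slight extension of $v$ and of the data beyond $T$ (possible by~(a)), is to make rigorous the differentiability of $\lambda\mapsto g(\lambda\,\cdot)$ in $\EE_0$ at $\lambda=1$ using only $t\,g_t\in\EE_0$.
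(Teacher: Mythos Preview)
Your proposal is correct and follows essentially the same route as the paper: part~(a) is obtained by combining the local well-posedness from maximal $L^2$-regularity with the $L^2(0,T;H^2)\cap H^1(0,T;L^2)$ a~priori bounds and a continuation argument, and part~(b) is exactly the Angenent parameter trick applied via the implicit function theorem, with the isomorphism property of the linearization established by solving the linear problem on short subintervals and patching. The paper organizes the latter as a separate lemma (the solvability of $\partial_t h - DF(v)h = g$, $h(0)=0$, in $\mathds{E}_{1,\mu}$) and handles the endpoint $T$ by working on $(0,T')$ with a bound uniform in $T'<T$ rather than by extending the data beyond $T$, but these are minor technical variants of the same argument.
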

%


The following second main theorem deals with the parabolic smoothing effect and the real analyticity of the solution. Note that the additional regularity assumption on $f$ is
needed, together with Proposition~\ref{thm:globl2}, to prove the global existence of the solution in Theorem~\ref{thm:glob} above. We set $v^{(j)}:= \partial_t^{j} v$ and denote by 
$C^{\omega}$ the space of real analytic functions.


\begin{theorem}[Regularity]\label{thm:time}
	Let $v\in H^{1,q}_{\mu}(0,T;\lpso) \cap L^{q}_{\mu}(0,T;D(A_p))$ be the solution to the primitive equations for $v_0\in X_{\mu-1/q,q}$  and $P_pf\in L^{q}_{\mu}(0,T;\lpso)$ 
for $p,q,\mu$ as in Theorem~\ref{thm:glob}. 
	\begin{itemize}
		\item[(a)] If $P_p f \in H^{k,q}_{\mu}(0,T; \lpso)$ for $k\in \N_0= \N \cup \{0\}$,
		then  
		 for any $0<T'<T$ 
		\begin{align*}
		t^j \cdot v^{(j)} &\in H^{1,q}_{\mu}(0,T';\lpso) \cap L^{q}_{\mu}(0,T';D(A_p)), \quad j= 0, \ldots, k, \\
		v &\in  H_{loc}^{k+1,p}(0,T;\lpso) \cap H_{loc}^{k,q}(0,T;D(A_p)) \cap C^{k}((0,T);X_{1-1/q,q});
		\end{align*}
		\item[(b)] If $P_p f \in C^{\infty}((0,T); \lpso)$ or $P_p f \in C^{\omega}((0,T); \lpso)$, then 
		\begin{align*}
		v \in C^{\infty}((0,T); D(A_p)) \qquad \hbox{or} \qquad v \in C^{\omega}((0,T); D(A_p)),
		\end{align*}
		respectively;
		\item[(c)] If
		$P_p f \in C^{\infty}((0,T); C ^{\infty}_{per}(\Omega)^2)$ or $P_p f \in C^{\omega}((0,T); C ^{\omega}_{per}(\Omega)^2)$, then 
		\begin{align*}
		v \in C^{\infty}((0,T); C ^{\infty}_{per}(\Omega)^2) \qquad \hbox{or} \qquad v \in C^{\omega}((0,T); C ^{\omega}_{per}(\Omega)^2),
		\end{align*}
		respectively.
	\end{itemize}
	 
\end{theorem}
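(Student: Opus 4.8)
The plan is to recast \eqref{eq:primequiv} as an abstract quasilinear evolution equation and then run the \emph{parameter trick} of Angenent, in the form of \cite{PruessSimonett2016}, fed by a bootstrap based on maximal $L^q_\mu$-regularity of $A_p$. Applying the hydrostatic Helmholtz projection $P_p$, which annihilates $\nabla_H\pi_s$, turns \eqref{eq:primequiv} into
\[
\partial_t v + A_p v = F(v) + P_p f, \qquad v(0)=v_0,
\]
where $F(v):=-P_p\big(v\cdot\nabla_H v + w(v)\cdot\partial_z v\big)$ is a \emph{quadratic}, hence real-analytic, map between the function spaces used in Theorem~\ref{thm:glob}, with $w(v)$ the vertical velocity. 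Write $\mathbb{E}_1(a):=H^{1,q}_\mu(0,a;\lpso)\cap L^q_\mu(0,a;D(A_p))$, $\mathbb{E}_0(a):=L^q_\mu(0,a;\lpso)$ and $X_\gamma:=X_{\mu-1/q,q}$; then the given solution, restricted to any $(0,a)$ with $0<a<T$, lies in $\mathbb{E}_1(a)$.

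For (a), I would fix $0<a<T$ and, for real $\lambda$ near $1$, set $u_\lambda(t):=v(\lambda t)$ on $[0,a]$ (admissible since $\lambda a<T$), so that $u_\lambda\in\mathbb{E}_1(a)$ solves $\partial_t u_\lambda + \lambda A_p u_\lambda = \lambda F(u_\lambda) + \lambda R_\lambda P_p f$, $u_\lambda(0)=v_0$, where $(R_\lambda g)(t):=g(\lambda t)$ --- note that the initial value is \emph{independent} of $\lambda$, which is exactly why time rescaling is admissible for $v_0$ only in the rough space $X_\gamma$. Then I would consider
\[
N(\lambda,u):=\big(\partial_t u + \lambda A_p u - \lambda F(u) - \lambda R_\lambda P_p f,\ u(0)-v_0\big)
\]
near $(1,v)$ as a map into $\mathbb{E}_0(a)\times X_\gamma$. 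One has $N(1,v)=0$; since $F$ is polynomial and $\lambda\mapsto R_\lambda P_p f$ is $C^k$ into $\mathbb{E}_0(a)$ precisely when $P_p f\in H^{k,q}_\mu$, the map $N$ is of class $C^k$, with $D_uN(1,v)=(\partial_t + A_p - DF(v),\ u\mapsto u(0))$. The key point is that this is an isomorphism $\mathbb{E}_1(a)\to\mathbb{E}_0(a)\times X_\gamma$: $(\partial_t+A_p,\ u\mapsto u(0))$ is one by maximal $L^q_\mu$-regularity of $A_p$ (\cite{GigaGriesHusseinHieberKashiwabara2016}, \cite{PruessSimonett2004}), and $DF(v)$, a first-order operator with coefficients built from $v\in\mathbb{E}_1(a)$, is an admissible perturbation by the anisotropic product and trace estimates of Theorem~\ref{thm:glob} (invertibility on a short interval, then patched, or a Neumann series in the time-weighted norm, as in \cite{PruessSimonett2016}). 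The implicit function theorem then gives a $C^k$ curve $\lambda\mapsto u_\lambda\in\mathbb{E}_1(a)$ through $(1,v)$, and by uniqueness $u_\lambda=v(\lambda\cdot)$; since $\partial_\lambda^j u_\lambda(t)=t^j v^{(j)}(\lambda t)$, evaluating at $\lambda=1$ yields $t^j v^{(j)}\in\mathbb{E}_1(a)$ for $j=0,\dots,k$. Because $a<T$ is arbitrary and the weight $t^{1-\mu}$ is bounded above and below on each compact $[\delta,T']\subset(0,T)$, unweighting together with the embedding $\mathbb{E}_1\hookrightarrow C([\delta,T'];X_{1-1/q,q})$ gives the remaining claims in (a).

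For (b) and (c), if $P_p f\in C^\infty((0,T);\lpso)$ then part (a) applies near every $t_0\in(0,T)$ for all $k$, so $v\in C^\infty((0,T);D(A_p))$. If $P_p f\in C^\omega((0,T);\lpso)$, then for analytic $f$ the substitution $t\mapsto\lambda t$ makes $\lambda\mapsto R_\lambda P_p f$ extend holomorphically past $\lambda=1$, so $N$ is jointly holomorphic and the analytic implicit function theorem produces a holomorphic curve $\lambda\mapsto u_\lambda\in\mathbb{E}_1(a)$; reading this off together with the bootstrap of (a) gives $v\in C^\omega((0,T);D(A_p))$, exactly as in the abstract analyticity theorem of \cite{PruessSimonett2016}. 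For (c) I would replace the time rescaling by a real-analytic family of diffeomorphisms $\theta_\mu\colon\overline\Omega\to\overline\Omega$, $\mu\in\mathbb{R}^m$ small, $\theta_0=\mathrm{id}$, chosen to respect horizontal periodicity and to preserve the flat faces $\Gamma_u,\Gamma_b$; the push-forward $v^\mu:=\theta_\mu^*v$ solves a transformed system whose operator $A_p^\mu$ (still with maximal regularity for small $\mu$), nonlinearity $F^\mu$ and data are analytic in $\mu$, and the same implicit-function and bootstrap argument in the parameter $\mu$ shows $\mu\mapsto v^\mu$ is $C^\infty$, resp.\ real analytic. Differentiating in $\mu$ produces the spatial smoothness, resp.\ the Cauchy bounds $\norm{\partial^\alpha v(t)}\le C R^{|\alpha|}|\alpha|!$ characterising $v(t)\in C^\omega_{per}(\Omega)^2$; near $\Gamma_u\cup\Gamma_b$ the homogeneous Dirichlet/Neumann conditions on the flat boundary permit an even/odd reflection in $z$ that reduces matters there to interior analyticity.

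The hard part will be the invertibility of $D_uN(1,v)=(\partial_t+A_p-DF(v),\ u\mapsto u(0))$, i.e.\ verifying that $DF(v)$ --- with coefficients only in the maximal-regularity class of the solution, and involving the \emph{non-local} vertical term $w(v)$ --- is an admissible lower-order perturbation of the maximal-$L^q_\mu$-regularity operator $\partial_t+A_p$; this forces one to re-use the anisotropic product and trace estimates behind Theorem~\ref{thm:glob}. In (c) there is the added complication that the diffeomorphisms $\theta_\mu$ interact both with the vertical integral defining $w(v)$ and with the boundary conditions, so $\theta_\mu$ must be tailored to the cylindrical geometry and a reflection argument used across the flat faces for the scheme to close up to $\partial\Omega$.
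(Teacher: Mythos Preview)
Your plan for parts~(a) and~(b) is essentially the paper's: the same parameter trick via time-rescaling $u_\lambda(t)=v(\lambda t)$, the same map $N$, and the same isomorphism claim for $D_uN(1,v)$ reduced to maximal $L^q_\mu$-regularity of $A_p$ plus solvability of the linearized problem $\partial_t h + A_p h - DF(v)h = g$, $h(0)=0$. The paper packages the latter as a separate lemma (proved by a contraction argument on short subintervals of a finite partition of $(0,T)$ chosen so that $\|v\|_{\mathbb{E}_1}$ is small on each), which is the ``patching'' you allude to.

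For part~(c) your route and the paper's diverge. You propose a family of spatial diffeomorphisms $\theta_\mu\colon\overline\Omega\to\overline\Omega$ compatible with periodicity and the flat faces, then run the implicit function theorem on the transformed system; you already flag the difficulty that $\theta_\mu$ interacts with the non-local vertical integral $w(v)$ and with the boundary conditions. The paper sidesteps exactly this by a two-stage decoupling. First it uses only \emph{horizontal} translations coupled with time, $\Phi_{\lambda,\eta_H}(t,x)=(\lambda t,\,x+t\eta_H)$ with $\eta_H=(\eta_x,\eta_y,0)$, which are genuine isomorphisms of the laterally periodic cylinder and commute with the vertical average, with $w(\cdot)$, and with the hydrostatic Helmholtz projection; hence $A_p$ and $F_p$ simply conjugate, no ``$A_p^\mu$'' is needed, and the parameter trick gives joint analyticity in $(t,x,y)$. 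Since $\pi_s$ depends only on $(x,y)$, this already yields analyticity of the surface pressure. Second, one moves the pressure to the right-hand side and considers $\partial_t v-\Delta v+v\cdot\nabla_H v+w(v)\partial_z v=f-\nabla_H\pi_s$, a \emph{local} (projection-free) equation with real-analytic forcing; for $\Gamma_D=\emptyset$ an even reflection in $z$ extends $v$ to the full torus and one repeats the trick with $\eta\in\mathbb R^3$, while for $\Gamma_D\neq\emptyset$ one localizes in $z$ near the Dirichlet part. Your general $\theta_\mu$ scheme can presumably be made to work, but it forces you to track how $P_p$ (hence $A_p$) and the vertical integral transform under a non-trivial change of variables; the paper's horizontal-first strategy eliminates that bookkeeping by exploiting that the only genuinely non-local ingredient, $\nabla_H\pi_s$, is two-dimensional.
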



The proof of the above regularity results relies on the implicit function theorem. 
Masuda \cite{Masuda1980} introduced first an extra parameter to prove the spatial analyticity of solutions to the Navier-Stokes equations using the implicit function theorem. Angenent \cite{Angenent1990_2, Angenent1990} systematically
developed the nowadays called parameter trick valid for general quasilinear evolution equations. For proofs within the $L^p$-setting we are working in and for further refinements of this method, we refer e.g. to \cite{PruessSimonett2016}.

\begin{remarks}\label{rem:main}
\begin{itemize}
\item[(a)] The solution class given in Theorem~\ref{thm:glob} admits the following embeddings
		\begin{align*}
		   H^{1,q}_{\mu}(0,T;\lpso) \cap L^{q}_{\mu}(0,T;D(A_p))&\hookrightarrow  C([0,T]; X_{\mu-1/q,q}),\\
		    H^{1,q}_{\mu}(0,T;\lpso) \cap L^{q}_{\mu}(0,T;D(A_p))&\hookrightarrow C((\delta,T]; X_{1-1/q,q}), \quad \delta>0.
		\end{align*}
\item[(b)] Replacing in Theorem~\ref{thm:glob} the condition on $P_p f$ by $P_p f \in L^{q}_{\mu}(0,T; \lpso)$ one still obtains local solutions, see Proposition~\ref{prop:loc} below. 
However, to extend this solution to a global one, one needs a priori bounds in $X_{\overline{\mu},q}$ with $\mu< \overline{\mu}\leq 1$, that is, a slightly more 
regular space, than the space of admissible initial values $X_{\mu,q}$, compare Theorem~\ref{thm:globex} below. The additional assumption on the time regularity of $P_p f$ in 
Theorem~\ref{thm:glob}  
assures that this solution is also an $L^2$ solution, and by Proposition~\ref{thm:globl2} $v\in H^{1}(\delta,T;D(A_2))\hookrightarrow C([\delta,T];D(A_2))$ for $\delta\in (0,T)$.
\item[(c)] Concerning analyticity of solutions, for solutions $u$ to the Navier-Stokes equations in $\R^n$ with initial values in $u_0 \in L^n(\R^n)^n$ estimates on 
$\norm{D^{\beta} u}_{L^{q}(\R^n)}$, $\beta\in \N_0^n$, have been established in \cite{GigaSawada2003} using heat kernel estimates. However, the method used there is not applicable in the 
presence of boundaries.
\item[(d)]  The surface pressure $\pi_s$ can be reconstructed from $v$, compare \cite[Equation (6.2)]{HieberKashiwabaraHussein2016}. This gives $\pi_s \in L^{q}_{\mu}(0,T;H_{per,0}^{1,p}(G))$, where $H_{per,0}^{1,p}(G)=\{\pi_s\in H_{per}^{1,p}(G)\colon \int_G \pi_s =0 \}$.
\end{itemize}
\end{remarks}

 \section{Linear Theory}\label{sec:lin}
Consider  the linear problem
 \begin{align}\label{eq:HydrostaticStokes}
 \left\{
 \begin{array}{rll}
 \partial_t v -\Delta v +\nabla_H \pi_s = f,\\
 \div_H \overline{v} = 0, \\
 v(0)= v_0
 \end{array}\right.
 \end{align}
 subject  to boundary conditions \eqref{eq:bc}.  As described  in \cite{GigaGriesHusseinHieberKashiwabara2016}, the key observation for solving this problem is to  
solve first the equation \eqref{eq:HydrostaticStokes} for the surface pressure and to consider the hydrostatic Stokes operator defined by 
 \begin{align}\label{eq:Ap}
 A_p v = \Delta v - (\mathds{1}-P_p) (\partial_z v)\mid_{\Gamma_D}, \quad \hbox{for } v\in D(A_p).
 \end{align}
This allows us to analyze the above linear problem by perturbation methods for the Laplacian. In particular, it was shown in \cite{GigaGriesHusseinHieberKashiwabara2016} that $-A_p$ admits  a 
bounded $H^{\infty}$-calculus and thus also maximal $L^q$-regularity.

The analysis in \cite{GigaGriesHusseinHieberKashiwabara2016} makes use of exponential stability of the hydrostatic semigroup generated by $A_p$. The latter  fact has been 
obtained first in \cite{HieberKashiwabara2015}.  We give here an alternative proof of this property via the positivity of the spectrum of $-A_p$ and collect in addition further spectral properties
of $A_p$. 

\begin{proposition}\label{prop:ellipticregularity}
 		Let $p\in (1,\infty)$ and $\lambda\geq 0$ if $\Gamma_D \neq \emptyset$ and $\lambda>0$ if $\Gamma_D=\emptyset$.
 	\begin{itemize}
 		\item[(a)] Let $v\in D(A_p)$ and $(A_p-\lambda) v = f$. If $P_p f\in H^{s,p}(\Omega)^2$ for $s\geq 0$, then $v\in H^{s+2,p}(\Omega)^2$ and there exists a constant $C>0$ such that  
 		\begin{align*}
 		\norm{v}_{H^{s+2,p}(\Omega)^2} \leq C \norm{f}_{H^{s,p}(\Omega)^2}.
 		\end{align*}
 		\item[(b)] The spectrum of $A_p$ is purely discrete and all the eigenfunctions of $A_p$ belong to  $C^{\infty}(\overline{\Omega})^2$. In particular, 
             the spectrum of $-A_p$ is independent of $p$, $\sigma(-A_p) \subset [0,\infty)$ and  $\sigma(-A_p) \subset [c,\infty)$ for some $c>0$ provided $\Gamma_D\neq \emptyset$.
 \item[(c)] The semigroup generated by $A_p-\lambda$ is exponentially stable.
 	\end{itemize}
   \end{proposition}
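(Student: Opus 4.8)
The plan is to derive all three statements from three ingredients: the sectoriality of $-A_p$ (equivalently, its bounded $H^\infty$-calculus) established in \cite{GigaGriesHusseinHieberKashiwabara2016}, which yields a nonempty resolvent set and an analytic semigroup; elliptic regularity for the Laplacian under the periodic/Dirichlet/Neumann conditions on $\Omega$, noting that after periodic identification in $x,y$ the domain $\Omega$ has smooth, pairwise disjoint boundary pieces $\Gamma_u,\Gamma_b$ (no edges where Dirichlet meets Neumann), so that elliptic estimates of all orders are available; and the $L^2$-theory, where $-A_2$ will turn out to be nonnegative and self-adjoint.

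For (a), fix $\lambda_0>0$; by sectoriality $\lambda_0\in\rho(A_p)$ and $f\mapsto(A_p-\lambda_0)^{-1}f$ is bounded from $\lpso$ into $H^{2,p}(\Omega)^2$. Using the representation \eqref{eq:Ap}, the equation $(A_p-\lambda)v=f$ is equivalent to the elliptic boundary value problem $(\Delta-\lambda_0)v=f+(\lambda-\lambda_0)v+(\mathds{1}-P_p)(\partial_z v)|_{\Gamma_D}$ with $v|_{\Gamma_D}=0$, $\partial_z v|_{\Gamma_N}=0$ and periodicity on $\Gamma_l$. I would then bootstrap: taking $\partial_z$ and restricting to $\Gamma_D$ costs $1+1/p$ orders, the two-dimensional surface-pressure solve on $G$ encoded in $\mathds{1}-P_p$ is regularity preserving, and its constant-in-$z$ extension to $\Omega$ loses nothing; hence if $v\in H^{m,p}(\Omega)^2$ the right-hand side lies in $H^{\min\{s,\,m-1-1/p\},p}(\Omega)^2$ and elliptic regularity for $\Delta-\lambda_0$ returns $v\in H^{\min\{s+2,\,m+1-1/p\},p}(\Omega)^2$ with the corresponding estimate. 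Starting from $v\in H^{2,p}(\Omega)^2$ with $\|v\|_{H^{2,p}}\le C\|f\|_{L^p}$ and iterating finitely many times, each step gaining $1-1/p>0$, produces $v\in H^{s+2,p}(\Omega)^2$ and the claimed bound; the case distinctions in Lemma~\ref{lem:real} are an instance of the same trace phenomenon, and since Bessel-potential and Sobolev spaces coincide at integer exponents no special care is needed there.

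For (b), boundedness of $\Omega$ gives $D(A_p)\subset H^{2,p}_{per}(\Omega)^2$, which embeds compactly into $\lpso$, so $A_p$ (having nonempty resolvent set) has compact resolvent and hence purely discrete spectrum consisting of eigenvalues of finite multiplicity. If $A_pv=\nu v$, applying part (a) to $(A_p-\lambda_0)v=(\nu-\lambda_0)v$ repeatedly raises the regularity of $v$ by $2$ at each step, so $v\in\bigcap_{k}H^{k,p}(\Omega)^2\subset C^\infty(\overline\Omega)^2$; a smooth field in $\lpso$ lies in $L^q_{\overline\sigma}(\Omega)$ for every $q$ and satisfies the boundary conditions classically, hence belongs to $D(A_q)$ with $A_qv=P_q\Delta v=P_p\Delta v=\nu v$ (the formula for $P_p$ does not depend on $p$ on smooth fields). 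This shows $\sigma(A_p)=\sigma(A_q)$ with identical smooth eigenspaces. To locate the spectrum I would pass to $L^2_{\overline\sigma}(\Omega)$: for $u,v\in D(A_2)$, using that $P_2$ is the orthogonal hydrostatic Helmholtz projection and integrating by parts — the boundary terms vanish on $\Gamma_D$ (Dirichlet), on $\Gamma_N$ (there $\partial_z v=0$ and the normal is vertical), and on $\Gamma_l$ (periodicity) — one obtains $\langle-A_2u,v\rangle_{L^2}=\langle\nabla u,\nabla v\rangle_{L^2}$. Thus $-A_2$ is symmetric, has the real regular point $-\lambda_0$ in its resolvent set and is therefore self-adjoint, and it is nonnegative, so $\sigma(-A_2)\subset[0,\infty)$; if $\Gamma_D\neq\emptyset$ the Poincaré inequality gives $\langle-A_2u,u\rangle\ge c\|u\|^2$, whence $\sigma(-A_2)\subset[c,\infty)$. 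Combined with the $p$-independence above this yields (b), and it also shows that $\lambda$ in (a) indeed lies in $\rho(A_p)$ under the stated sign conditions.

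For (c), the semigroup generated by $A_p$ is analytic, hence eventually norm-continuous, so its exponential growth bound equals the spectral bound $s(A_p)=\sup\mathrm{Re}\,\sigma(A_p)$, which by (b) is $\le0$ in general and $\le-c<0$ when $\Gamma_D\neq\emptyset$. Consequently the growth bound of $A_p-\lambda$ is $s(A_p)-\lambda$, which is $\le-c$ if $\Gamma_D\neq\emptyset$ (where $\lambda\ge0$) and $\le-\lambda<0$ if $\Gamma_D=\emptyset$ (where $\lambda>0$); in either case it is strictly negative, i.e.\ the semigroup generated by $A_p-\lambda$ is exponentially stable. The genuinely delicate step is part (a): one must correctly identify the elliptic problem solved by $v$, verify that $\mathds{1}-P_p$ — the surface-pressure solve on $G$ followed by constant-in-$z$ extension — is regularity preserving, and render the bootstrap quantitative so as to produce the stated constant; everything else is then either soft functional analysis or the clean $L^2$ computation above.
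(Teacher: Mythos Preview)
Your proposal is correct and follows essentially the same route as the paper: for (a) you rewrite $(A_p-\lambda)v=f$ via the representation \eqref{eq:Ap} as a Laplace problem with right-hand side containing the lower-order perturbation $(\mathds{1}-P_p)(\partial_z v)|_{\Gamma_D}$ and bootstrap, gaining roughly $1-1/p$ derivatives per step (the paper writes $1-1/p-\epsilon$ to stay clear of the trace endpoint); for (b) you use compact resolvent, smoothness of eigenfunctions via (a), $p$-independence, and the $L^2$ form $\mathfrak a[v,v']=\langle\nabla v,\nabla v'\rangle$, where the paper invokes min-max and comparison with $-\Delta$ while you argue directly via self-adjointness and Poincar\'e; for (c) both arguments are the spectral-bound-equals-growth-bound fact for analytic semigroups. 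The only cosmetic difference is your introduction of an auxiliary $\lambda_0$ in (a), whereas the paper works directly with $\Delta-\lambda$.
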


 \begin{remarks}
 \begin{itemize}
 \item[(a)] In the quantitative analysis of the primitive model the Coriolis force, which may be incorporated by replacing $f$ in \ref{eq:primequiv} by $f_0 k \times v$, 
plays an important role. For the qualitative analysis we omit this term since it is a zero order term, which can be included easily into our analysis by setting
 		\begin{align*}
 		A_p v = P_p \Delta + P_p \left(f_0k \times v\right).
 		\end{align*}
\item[(b)] In \cite{Cotietall2015} the hydrostatic Stokes operator with Robin boundary conditions is considered. The strategy to solve first for the surface pressure can be applied 
in this case as well.
 \end{itemize}	
 \end{remarks}

 \begin{proof}[Proof of Proposition~\ref{prop:ellipticregularity}]
 Let $v\in D(A_p)\subset H^{2,p}$ and $P_p f\in H^{s,p}(\Omega)^2\subset L^p(\Omega)^2$. We then obtain 
 \begin{align*}
 (\Delta -\lambda)v= P_pf + (\mathds{1}-P_p) (\partial_z v)\mid_{\Gamma_D} \in  H^{\min\{s, 1-1/p-\epsilon\}} 
 \end{align*}
for some $\varepsilon>0$ and elliptic regularity for the Laplacian implies $v\in H^{2+\min\{s, 1-1/p-\epsilon\}}(\Omega)$. Iterating this argument, the assertion follows. 
 	
The discreetness of the spectrum follows from the compactness of the  embedding $D(A_p)\subset H^{2,p}(\Omega)\cap \lpso\hookrightarrow \lpso$. Again, by elliptic regularity of $A_p$, 
eigenfunctions of $A_p$ are in $C^{\infty}(\overline{\Omega})$ and thus independent of $p$. Hence, it is sufficient to compute the spectrum in the case $p=2$, 
where $-A_2=-P_2 \Delta$ is associated to the closed symmetric form defined by
 	 \begin{align*}
 	 \mathfrak{a}[v,v']:= \langle \nabla v, \nabla v'\rangle_{L^2(\Omega)^{2\times 2}}, \quad \hbox{where } v,v'\in \{H_{per}^1(\Omega) \cap L^2_{\overline{\sigma}}(\Omega) \colon v\vert_{\Gamma_D}=0\}.
 \end{align*}
 	 Eigenvalues are characterized by the min-max principle, and therefore enlarging the form domain to $\{v\in H_{per}^{1,2}(\Omega)^2 \colon v_{\Gamma_D} =0 \}$, we conclude that 
the eigenvalues of $-A_p$ can be inferred by those of $-\Delta$, and in the proof of $(a)$ one can choose any $\lambda\geq 0$ for $\Gamma_D\neq \emptyset$ and any $\lambda>0$ for $\Gamma_D= \emptyset$. The exponential stability of the hydrostatic semigroup follows from the fact that for analytic semigroups the growth bound coincides with the spectral bound.
 \end{proof}

\begin{proof}[Proof of Lemma~\ref{lem:real}]
Note that the cylindrical domain $\Omega$ can be extended to the $3$ dimensional full torus, and therefore we may equivalently define the function spaces appearing in Lemma \ref{lem:real} 
as restrictions of the function spaces on the full torus, compare e.g. \cite[Chapter 9]{Triebel}. Then, the real interpolation space $X_{\theta,q}$ can be computed by means 
of these retractions and co-retractions, which are related to function spaces on the full torus, compare \cite[Theorem 1.2.4]{Triebel1978} and \cite[1.7.1 Theorem 1]{Triebel1978}. 
Since $D(A_p)$ consists of functions with Dirichlet, Neumann or mixed boundary conditions one can extend these by odd and even extensions to the full torus, which defines a 
co-retraction to the full torus, where real interpolation spaces are known, compare e.g. \cite[Section 4.11.1]{Triebel1978}. Since  odd and even function have vanishing traces and 
traces of the derivatives, respectively, the traces can be found in the interpolation spaces as long as they are well-defined. Alternatively, the retractions and co-retractions 
given in \cite[Section 4]{HieberKashiwabaraHussein2016} can be used.
\end{proof}

\section{Semilinear evolution equations and maximal $L^q$-regularity}\label{sec:maxreg}
In this section we present results on semilinear evolution equations, which then will be applied to the primitive equations  in Section~\ref{sec:proofs}. We also present only a 
simplified version of a more general result for quasilinear systems due to Pr\"uss and Wilke \cite{PruessWilke2016}, which, however, will be sufficient for our purposes. 

Let $X_0,X_1$ be Banach spaces such that $X_1 \hookrightarrow X_0$ is densely embedded, and let $A\colon X_1 \rightarrow X_0$ be bounded. 
The aim is to solve the semi-linear problem for $0<T\leq \infty$
\begin{align}\label{eq:abstrpb}
u^{\prime} + Au = F(u) + f, \qquad 0 < t < T, \qquad u(0)=u_0.
\end{align}
For a Banach space $X$, a time weight $\mu\in (1/q,1]$, a time interval $J\subset [0,\infty)$ and $k\in \N$, we set
\begin{align*}
L^q_{\mu}(J;X) &= \{u\in L^{1}_{loc}(J;X) \colon t^{1-\mu} u \in L^q(J;X)\}, \\
H^{1,q}_{\mu}(J;X) &= \{u\in L^{1}_{loc}(J;X) \cap H^{1,1}(J;X) \colon t^{1-\mu} u^{\prime} \in L^q(J;X)\}, \\
\vdots \\
H^{k+1,q}_{\mu}(J;X) &= \{u\in L^{1}_{loc}(J;X) \cap H^{k+1,1}(J;X) \colon u^{\prime} \in H^{k,q}_{\mu}(J;X)\},
\end{align*}
Here $u^{\prime}$ denotes  the time derivative of $u$ in the distributional sense.

The problem \eqref{eq:abstrpb} is considered for initial data within the real interpolation space
\begin{align*}
u_0\in X_{\gamma, \mu} =(X_0,X_1)_{\mu-1/q,q} \quad \hbox{and for} \quad  f\in \mathds{E}_{0,\mu}(J):= L_{\mu}^{q}(J;X_0), \quad 
\end{align*}
where  $q\in(1,\infty)$. We aim for solutions lying in the maximal regularity space 
\begin{align*}
\mathds{E}_{1,\mu}(J) := H^{1,q}_{\mu}(J;X_0) \cap L^q_{\mu}(J;X_1)
\end{align*}
and define for $ \beta \in [0,1]$ the space $X_\beta$ as the complex interpolation space $[X_0,X_1]_\beta$. 

The following existence and uniqueness results are based on the following assumptions:  
\begin{itemize}
	\item[(H1)] $A$ has maximal $L^q$-regularity for $q\in (1,\infty)$.
	\item[(H2)] $F\colon X_{\beta} \rightarrow X_0$ satisfied the estimate
	\begin{align*}
	\norm{F(u_1) - F(u_2)}_{X_0} \leq C (\norm{u_1}_{X_{\beta}}+ \norm{u_1}_{X_{\beta}}) (\norm{u_1-u_2}_{X_{\beta}})
	\end{align*}
	for some $C>0$ independent of $u_1,u_2$.
	\item[(H3)] $\beta - (\mu-1/q) \leq \frac{1}{2}(1-(\mu-1/q))$, that is $2\beta -1 + 1/q\leq \mu$.
	\item[(S)] $X_0$ is of class UMD, and the embedding 
	$$H^{1,q}(\R;X_0) \cap L^q(\R;X_1) \hookrightarrow H^{1-\beta,q}(\R;X_{\beta})$$
	is valid for each $\beta \in (0,1)$ and $q\in (1,\infty)$.
\end{itemize}

\begin{theorem}{\cite[Theorem 1.2]{PruessWilke2016}}. \label{thm:pruesswilke}
Assume that the assumptions $(H1), (H2), (H3)$ and $(S)$ hold and let 
	\begin{align*}
	u_0 \in X_{\gamma, \mu} \qquad \hbox{and} \qquad f \in L^q(0,T;X_0). 
	\end{align*}
	
	Then there exists a time $T'=T'(u_0)$ with $0<T'\leq T$ such that problem \eqref{eq:abstrpb} admits a unique solution 
	\begin{align*}
	u\in H^{1,q}_{\mu}(0,T';X_0) \cap L_{\mu}^q(0,T';X_1).
	\end{align*}
	Furthermore, the solution $u$ depends continuously on the data. 
\end{theorem}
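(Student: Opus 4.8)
\textbf{Proof proposal for Theorem~\ref{thm:pruesswilke}.}

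The plan is to solve \eqref{eq:abstrpb} by a contraction mapping (Banach fixed point) argument in the maximal regularity space $\mathds{E}_{1,\mu}(0,T')$, using the linear maximal $L^q$-regularity of $A$ to invert the principal part and treating $F(u)+f$ as a given inhomogeneity. First I would record the solution operator for the linear problem: by hypothesis (H1) together with \cite[Theorem 3.2]{PruessSimonett2004} (the self-improving property of maximal regularity under time weights, already invoked in Section~\ref{sec:pre}), for every $g\in L^q_\mu(0,T';X_0)$ and every $u_0\in X_{\gamma,\mu}=(X_0,X_1)_{\mu-1/q,q}$ there is a unique $u=S(u_0,g)\in \mathds{E}_{1,\mu}(0,T')$ solving $u'+Au=g$, $u(0)=u_0$, with a bound $\|u\|_{\mathds{E}_{1,\mu}(0,T')}\le C_0(\|u_0\|_{X_{\gamma,\mu}}+\|g\|_{L^q_\mu(0,T';X_0)})$ in which $C_0$ can be taken independent of $T'\in(0,T]$ (this uniformity, which is where the UMD part of (S) and the embedding theory enter, is standard and may be quoted). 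Define the map $\Phi(u):=S(u_0, F(u)+f)$ on a ball $B_R$ in $\mathds{E}_{1,\mu}(0,T')$ around the solution $u_*:=S(u_0,f)$ of the linear problem; a fixed point of $\Phi$ is precisely the desired solution.

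The second ingredient is the control of the Nemytskii map $u\mapsto F(u)$ from $\mathds{E}_{1,\mu}$ into $L^q_\mu(0,T';X_0)$. Here one uses the mixed derivative embedding in (S), namely $\mathds{E}_{1,\mu}(0,T')\hookrightarrow H^{1-\beta,q}_\mu(0,T';X_\beta)$, together with the trace/interpolation inequality, to get $\mathds{E}_{1,\mu}(0,T')\hookrightarrow C([0,T'];X_{\gamma,\mu})$ and, crucially, a bound
\begin{align*}
\|u\|_{L^q_\mu(0,T';X_\beta)}\le \eta(T')\,\|u\|_{\mathds{E}_{1,\mu}(0,T')}+C(T')\|u_0\|_{X_{\gamma,\mu}},
\end{align*}
where the condition (H3), $2\beta-1+1/q\le\mu$, is exactly what guarantees $\beta\le\mu-1/q+\tfrac12(1-(\mu-1/q))$ so that $X_\beta$ lies strictly between the trace space and $X_1$, making the embedding constant $\eta(T')$ small for small $T'$ (the weight $\mu$ localised near $t=0$ is what produces the smallness). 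Combining this with the quadratic estimate (H2), $\|F(u_1)-F(u_2)\|_{X_0}\lesssim(\|u_1\|_{X_\beta}+\|u_2\|_{X_\beta})\|u_1-u_2\|_{X_\beta}$, Hölder's inequality in time turns $\|F(u_1)-F(u_2)\|_{L^q_\mu(0,T';X_0)}$ into a bound of the form $C(\|u_1\|_{\mathds{E}_{1,\mu}}+\|u_2\|_{\mathds{E}_{1,\mu}}+\|u_0\|_{X_{\gamma,\mu}})\,\|u_1-u_2\|_{\mathds{E}_{1,\mu}}$ with an extra factor that is small when $T'$ is small or $R$ is small.

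With these estimates, I would choose $R$ proportional to $\|u_0\|_{X_{\gamma,\mu}}+\|f\|_{L^q(0,T;X_0)}$ and then $T'=T'(u_0)\le T$ small enough that $\Phi$ maps $B_R$ into itself and is a strict contraction there; Banach's fixed point theorem gives existence and uniqueness of $u\in\mathds{E}_{1,\mu}(0,T';X_0)\cap L^q_\mu(0,T';X_1)$. Uniqueness in the full space (not only in $B_R$) follows by a standard continuation/Gronwall argument on the difference of two solutions. Continuous dependence on $(u_0,f)$ is then obtained by applying the same linear estimate to the difference of two solutions with different data and absorbing the $F$-terms exactly as in the contraction step. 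I expect the main obstacle to be the bookkeeping behind the $T'$-uniformity of the linear maximal regularity constant and the smallness of the embedding constant $\eta(T')$ in the weighted spaces — i.e. verifying that the time weight really does make the lower-order term subordinate — but since this is precisely the content of \cite[Theorem 1.2]{PruessWilke2016}, in our exposition it can be cited rather than reproved.
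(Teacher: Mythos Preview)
The paper does not give its own proof of this theorem; it is quoted directly from \cite{PruessWilke2016}. Your contraction-mapping outline is indeed the correct overall strategy, and it is essentially what Pr\"uss and Wilke do. However, there is a genuine gap in the step where you claim smallness: you assert that the embedding constant $\eta(T')$ in
\[
\|u\|_{L^q_\mu(0,T';X_\beta)}\le \eta(T')\,\|u\|_{\mathds{E}_{1,\mu}(0,T')}+C(T')\|u_0\|_{X_{\gamma,\mu}}
\]
tends to zero as $T'\to 0$. In the \emph{critical} case $2\beta-1+1/q=\mu$ of (H3) --- which is precisely the case of interest, since the application takes $\mu=1/p+1/q$ --- there is no spare Sobolev index to convert into a positive power of $T'$, and the embedding constant does \emph{not} vanish. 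Your argument as written therefore fails exactly at criticality.

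The mechanism actually used (and reproduced in the paper in the proof of Lemma~\ref{lemma:F}) is different. One splits $u=u_0^*+u'$ with $u_0^*(t)=e^{tA}u_0$ and $u'(0)=0$, and uses (S) together with Sobolev embedding and Hardy's inequality to obtain
\[
\{v\in\mathds{E}_{1,\mu}(0,T'):v(0)=0\}\hookrightarrow L^{2q}_{\vartheta}(0,T';X_\beta),\qquad \vartheta=\tfrac12(1+\mu),
\]
with embedding constant \emph{independent} of $T'$ (Hardy's inequality is what removes the $T'$-dependence for functions with vanishing trace). Smallness in the contraction estimate then comes from two sources: the radius $r$ of the ball around $u_0^*$, controlling $\|u'\|_{\mathds{E}_{1,\mu}}$, and the fact that $\|u_0^*\|_{L^{2q}_{\vartheta}(0,T';X_\beta)}\to 0$ as $T'\to 0$ by dominated convergence. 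Note also that the bilinearity of $F$ forces the intermediate norm to be $L^{2q}$ rather than $L^q$, which is why the exponent $\vartheta$ and Hardy's inequality appear. Your final sentence correctly identifies that the bookkeeping is delicate, but the specific source of smallness you name is the wrong one.
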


\begin{remarks}\label{rem:pruesswilke}
	\begin{itemize}
\item[(a)] 
Condition $(S)$ holds true whenever $X_0$ is of class UMD and there is an operator $A_{\#} \in \mathcal{H}^{\infty}(X_0)$ with domain $D(A_{\#})=X_1$ satisfying  
$\phi^{\infty}_{A_{\#}} < \pi/2$, see Remark 1.1 of \cite{PruessWilke2016}.
\item[(b)] To verify condition $(S)$ in the situation considered here, i.e., $X_0=\lpso$ and $X_1=D(A_p)$, note first that $\lpso$ is of class UMD as closed subspace of $L^p(\Omega)^2$, 
and second, considering  $A_{\#}= A_p - \lambda$ with $\lambda>0$, it was proved in  \cite{GigaGriesHusseinHieberKashiwabara2016}
that $A_{\#} \in \mathcal{H}^{\infty}(X_0)$ with  $\phi^{\infty}_{A_{\#}} =0 < \pi/2$. 
\item[(c)] Due to the embeddings
		\begin{align*}
		\mathds{E}_{1,\mu}(0,T') \hookrightarrow C([0,T']; X_{\gamma,\mu}) \quad \hbox{and}\quad \mathds{E}_{1,\mu}(\delta,T') \hookrightarrow C([\delta,T']; X_{\gamma}), \quad \delta>0,
		\end{align*}
		there is an instantaneous smoothing effect typical for parabolic equations, compare e.g. \cite[Section 3.5.2]{PruessSimonett2016}.  
	\end{itemize}
\end{remarks}

When investigating the question of a global solution,  we consider
\begin{align*}
t_+(u_0):=\sup \{ T' >0 \colon \hbox{equation \eqref{eq:abstrpb} admits a solution on $(0,T')$}\}.
\end{align*}
By the above Theorem~\ref{thm:pruesswilke}, this set is non-empty, and we say that \eqref{eq:abstrpb} has a \textit{global solution} if for 
$f \in L^q(0,T;X_0)$ one has $t_+(u_0)=T$, where $0<T\leq \infty$. Global existence results can be derived from suitable a priori bounds 
following \cite[Theorem 5.7.1]{PruessSimonett2016}. The statement of \cite[Theorem 5.7.1]{PruessSimonett2016} and the relevant corollary \cite[Corollary 5.1.2]{PruessSimonett2016} 
are not formulated for the optimal time-weight used in \cite[Theorem 1.2]{PruessWilke2016}, however they carry over to this situation directly and without modifications. 
Therefore we state them here without proof. In the following, we denote by $C_b$ bounded continuous functions.

\begin{theorem}{\cite[Theorem 5.7.1]{PruessSimonett2016}}\label{thm:globex} 
Assume in addition to the assumptions of Theorem~\ref{thm:pruesswilke} 
that for $\mu < \overline{\mu} \leq 1$ the embedding  
	\begin{align*}
	X_{\gamma, \overline{\mu}} \hookrightarrow X_{\gamma, \mu}
	\end{align*}
	is compact, and that for some $\tau \in (0,t_+(u_0))$ the solution of \eqref{eq:abstrpb} satisfies
	\begin{align*}
	u\in C_b([\tau, t_+(u_0)); X_{\gamma, \overline{\mu}}),
	\end{align*}
	then there is a global solution to \eqref{eq:abstrpb}, i.e. $T'=T$.
\end{theorem}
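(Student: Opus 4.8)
The plan is a continuation argument: assuming for contradiction that $t_+:=t_+(u_0)<T$, I would construct an extension of $u$ slightly beyond $t_+$, contradicting the maximality built into the definition of $t_+$, and hence conclude $t_+(u_0)=T$, i.e.\ \eqref{eq:abstrpb} has a global solution. Throughout I would use that $u\in\mathds{E}_{1,\mu}(0,t_+)\hookrightarrow C([0,t_+);X_{\gamma,\mu})$ and that, since $t_+<T$, the translated forcings $f(\,\cdot+s\,)$ for $s\in(0,t_+)$ lie in $L^q(0,T-s;X_0)$ with norm at most $\|f\|_{L^q(0,T;X_0)}$.

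The heart of the matter is to upgrade the a priori bound to a uniform restart property. Since by assumption $u\in C_b([\tau,t_+);X_{\gamma,\overline\mu})$ and $X_{\gamma,\overline\mu}\hookrightarrow X_{\gamma,\mu}$ is compact, the orbit $\{u(t):t\in[\tau,t_+)\}$ is relatively compact, hence bounded, in $X_{\gamma,\mu}$. Revisiting the contraction scheme underlying Theorem~\ref{thm:pruesswilke}, the local existence time there may be taken to depend only on the $X_{\gamma,\mu}$-norm of the datum and the $L^q$-norm of the forcing (a consequence of the semilinear structure (H2) together with maximal regularity (H1)); therefore there is $\delta_0>0$ such that every solution issued from the orbit exists on $[0,\delta_0]$, and by the instantaneous smoothing of Remark~\ref{rem:pruesswilke}(c) the orbit $\{u(t):t\in[\tau',t_+)\}$ is then bounded --- indeed relatively compact --- in the endpoint trace space $X_\gamma$ for some $\tau'\in(\tau,t_+)$. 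Applying Theorem~\ref{thm:pruesswilke} once more, now with time weight $1$ (legitimate since (H3) holds a fortiori for $\mu=1$), I obtain a single $\varepsilon_0>0$ such that for every $t_0\in[\tau',t_+)$ the problem \eqref{eq:abstrpb} with initial value $u(t_0)\in X_\gamma$ and forcing $f(\,\cdot+t_0\,)$ has a solution on $[0,\varepsilon_0]$ in the unweighted class $H^{1,q}(0,\varepsilon_0;X_0)\cap L^q(0,\varepsilon_0;X_1)$. I expect this to be the main obstacle: it requires both the passage from the intermediate space $X_{\gamma,\overline\mu}$ to the endpoint space $X_\gamma$ and careful bookkeeping of the time weights, and it is precisely here that the compactness of $X_{\gamma,\overline\mu}\hookrightarrow X_{\gamma,\mu}$ is used (for the general quasilinear version of the statement, via a finite covering of the relatively compact orbit rather than via a norm-only bound on the existence time).

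Finally I would extend and conclude. Choosing $t_0\in[\tau',t_+)$ with $t_+-t_0<\varepsilon_0$ and letting $w$ denote the solution on $[0,\varepsilon_0]$ starting from $u(t_0)$, the function that equals $u$ on $(0,t_0]$ and $t\mapsto w(t-t_0)$ on $[t_0,t_0+\varepsilon_0]$ solves \eqref{eq:abstrpb} on $(0,t_0+\varepsilon_0)$: the two pieces take the common value $u(t_0)$ at $t_0$, by the uniqueness part of Theorem~\ref{thm:pruesswilke} they coincide on the overlap $[t_0,t_+)$, and near the interior point $t_0$ both lie in the unweighted maximal regularity class with matching trace, so the concatenation belongs to $\mathds{E}_{1,\mu}(0,t_0+\varepsilon_0)$. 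Since $t_0+\varepsilon_0>t_+$, this contradicts the definition of $t_+$; hence $t_+(u_0)=T$, which is the claimed global existence.
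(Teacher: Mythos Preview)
The paper does not supply its own proof of this statement: it explicitly records that \cite[Theorem~5.7.1]{PruessSimonett2016} and \cite[Corollary~5.1.2]{PruessSimonett2016} ``carry over to this situation directly and without modifications'' and therefore states the result without proof. Your continuation argument---relatively compact orbit in $X_{\gamma,\mu}$ via the compact embedding, uniform local existence time on that orbit from the contraction scheme behind Theorem~\ref{thm:pruesswilke}, smoothing into $X_\gamma$, and gluing past $t_+$---is precisely the strategy of the cited Pr\"uss--Simonett proof, so your proposal is correct and matches the intended argument.

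One minor remark: the detour through $X_\gamma$ and the unweighted class ($\mu=1$) is not strictly needed. Since $t_0>0$ is an interior point, the weight $t^{1-\mu}$ is bounded above and below on any compact subinterval around $t_0$, so a solution in $\mathds{E}_{1,\mu}$ with respect to the shifted origin $t_0$ already lies in $\mathds{E}_{1,\mu}$ with respect to the original origin on overlapping intervals; the concatenation step therefore goes through directly at the level of the $\mu$-weighted spaces. Your version is nonetheless correct, and it has the advantage of making the bookkeeping with the weights explicit.
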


Additional time and space-time regularity solutions of \eqref{eq:abstrpb} for more regular right hand side $f$ can be derived using the parameter trick based on the implicit 
function theorem.   Here for the time regularity, the version of of the parameter trick is adapted to time-weighted spaces, see e.g. \cite[Theorem 9.1]{ChillFasangova2009}, 
\cite[Section 9.4]{PruessSimonett2016} 

Let $F\colon X_1 \rightarrow X_0$ be a continuously differentiable function and $f$ an integrable function. We consider the problem
\begin{align}\label{eq:Fu}
u^{\prime} + F(u) = f.
\end{align}

\begin{theorem}{Compare \cite[Theorem 9.1]{ChillFasangova2009}}\label{thm:timereg}
	Let $q\in (1,\infty)$. Assume that for $k\in \N$, the  composition operator 
	$$\mathcal{F}\colon H_{\mu}^{1,q}(0,T;X_0) \cap L_{\mu}^{q}(0,T;X_1) \rightarrow L_{\mu}^{q}(0,T;X_0), \qquad u \mapsto F(u)$$
	is $k$ times continuously differentiable. Let $f\in L_{\mu}^{q}(0,T;X_0)$ and let $u\in H_{\mu}^{1,q}(0,T;X_0) \cap L_{\mu}^{q}(0,T;X_1)$ be a solution 
to \eqref{eq:Fu} on $(0,T)$. Assume that for the differential $D F$ of $F$ the linear problem 
	\begin{align*}
	u^{\prime} + D F(u)v = g, \qquad v(0)=0,
	\end{align*}
	admits for every $g \in L_{\mu}^{q}(0,T';X_0)$, $0<T'<T$ a unique solution $v\in H_{\mu}^{1,q}(0,T';X_0) \cap L_{\mu}^{q}(0,T';X_1)$. 
	
	Then for every $j = 0, \ldots, k$
	\begin{align*}
	u\in H_{loc}^{k+1,q}(0,T';X_0) \cap H_{loc}^{k,q}(0,T';X_1), \\
	t \mapsto t^j u^{(j)}(t) \in H_{\mu}^{1,q}(0,T';X_0) \cap L_{\mu}^{q}(0,T';X_1).
	\end{align*}
	If $\mathcal{F}$ and $f$ are of class $C^\infty$ or $C^{\omega}$, then $u \in C^{\infty}((0,T);X_1)$ or $u \in C^{\omega}((0,T);X_1)$, respectively.
\end{theorem}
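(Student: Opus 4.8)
The plan is to apply the implicit function theorem in the spirit of the parameter trick of Angenent, adapted to the time-weighted maximal regularity setting. First I would fix $0 < T' < T$ and, for a parameter $\lambda$ ranging in a small neighborhood of $1$ in $\R$, introduce the rescaling $u_\lambda(t) := u(\lambda t)$ on $(0,T')$. If $u$ solves $u' + F(u) = f$ on $(0,T)$, then $u_\lambda$ solves $u_\lambda' + \lambda F(u_\lambda) = \lambda f_\lambda$ with $f_\lambda(t) := f(\lambda t)$, still with initial value $u_0$. The key point is that the map $\lambda \mapsto f_\lambda$ is smooth (resp. analytic) from a neighborhood of $\lambda = 1$ into $L^q_\mu(0,T';X_0)$ precisely when $f$ is smooth (resp. analytic) in time; for the statement of parts (a) one only needs finitely many derivatives, which follows from $P_pf \in H^{k,q}_\mu$. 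I would then define
\begin{align*}
\Phi\colon (1-\varepsilon,1+\varepsilon) \times \big(H^{1,q}_{\mu,0}(0,T';X_0) \cap L^q_\mu(0,T';X_1)\big) &\longrightarrow L^q_\mu(0,T';X_0),\\
\Phi(\lambda, w) &:= w' + \lambda \mathcal{F}(w + u) - \lambda \mathcal{F}(u) - \lambda f_\lambda + \lambda f + F(u) - f,
\end{align*}
where the subscript $0$ denotes vanishing initial trace, arranged so that $\Phi(1,0) = 0$ corresponds to the given solution $u$ and $w = u_\lambda - u$ solves $\Phi(\lambda, w) = 0$.

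Next I would verify the hypotheses of the implicit function theorem. The regularity of $\Phi$ in $w$ is exactly the assumed $k$-fold (resp. $C^\infty$, $C^\omega$) differentiability of the composition operator $\mathcal{F}$ together with the fact that translation by the fixed element $u$ and multiplication by $\lambda$ are smooth; regularity in $\lambda$ reduces to the smoothness of $\lambda \mapsto f_\lambda$ noted above. The partial derivative $D_w\Phi(1,0)$ is the linear operator $v \mapsto v' + DF(u)v$ acting from $H^{1,q}_{\mu,0}(0,T';X_0) \cap L^q_\mu(0,T';X_1)$ to $L^q_\mu(0,T';X_0)$, and the hypothesis that the linearized problem $v' + DF(u)v = g$, $v(0)=0$, is uniquely solvable for every $g$ in the target space is precisely the statement that $D_w\Phi(1,0)$ is a bounded bijection; by the open mapping theorem it is therefore an isomorphism. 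Hence the implicit function theorem yields a map $\lambda \mapsto w(\lambda)$ of the same regularity class as $\Phi$ with $w(1) = 0$ and $\Phi(\lambda, w(\lambda)) = 0$, and by uniqueness of the rescaled problem $w(\lambda) = u_\lambda - u$. In particular $\lambda \mapsto u_\lambda = u(\lambda\,\cdot\,)$ is $C^k$ (resp. $C^\infty$, $C^\omega$) from $(1-\varepsilon,1+\varepsilon)$ into $H^{1,q}_\mu(0,T';X_0) \cap L^q_\mu(0,T';X_1)$.

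Finally I would translate parameter regularity into time regularity of $u$ itself. Differentiating $u_\lambda(t) = u(\lambda t)$ in $\lambda$ at $\lambda = 1$ gives $t\,u'(t)$, and iterating, the $j$-th $\lambda$-derivative at $\lambda = 1$ is a linear combination of $t^i u^{(i)}(t)$, $i \le j$, with top term $t^j u^{(j)}(t)$; an induction on $j$ then shows $t \mapsto t^j u^{(j)}(t) \in H^{1,q}_\mu(0,T';X_0) \cap L^q_\mu(0,T';X_1)$ for $j = 0,\dots,k$, which is the first assertion. The interior regularity $u \in H^{k+1,q}_{loc}(0,T';X_0) \cap H^{k,q}_{loc}(0,T';X_1)$ follows because on any compact subinterval $[\delta, T'']\subset (0,T')$ the weight $t^{1-\mu}$ and the factors $t^{-j}$ are bounded above and below. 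For the $C^\infty$ and $C^\omega$ conclusions one combines the real-analytic (resp. smooth) dependence of $u_\lambda$ on $\lambda$ with the embedding $H^{1,q}_\mu(0,T';X_0)\cap L^q_\mu(0,T';X_1) \hookrightarrow C([\delta,T'];X_\gamma)$ and, for the analytic case, a standard argument estimating the derivatives $u^{(j)}$ via Cauchy estimates on the holomorphic extension of $\lambda \mapsto u_\lambda$, localized away from $t = 0$; this upgrades values in $X_\gamma$ to values in $X_1$ by using the equation $u' = f - F(u)$ to trade one time derivative for membership in $X_1$.

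The main obstacle I anticipate is the bookkeeping for the analytic case: showing that $C^\omega$ dependence on the parameter, together with the maximal regularity embedding, genuinely yields local-in-time real analyticity with values in $X_1$ (not merely in the trace space), uniformly on compact subintervals of $(0,T')$. This requires a careful Cauchy-estimate argument for the $X_1$-valued derivatives and the observation that the radius of the parameter neighborhood can be taken uniform on compact time intervals; the rest is a routine verification that the composition operator hypotheses transfer through the affine shift and the scalar multiplication by $\lambda$.
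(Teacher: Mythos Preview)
The paper does not give its own proof of Theorem~\ref{thm:timereg}; it is quoted from \cite[Theorem~9.1]{ChillFasangova2009} and only applied later. The closest the paper comes to a proof is the argument for Lemma~\ref{lemma:vt}, where the parameter trick is carried out explicitly for the primitive equations: one defines
\[
G(\lambda,\nu) = \big(\nu' + (1+\lambda)F(\nu) - (1+\lambda)f_\lambda,\ \nu(0)-v(0)\big)
\]
as a map into $\mathds{E}_{0,\mu}(0,T')\times X_{\gamma,\mu}$, checks that $\partial_\nu G(0,v)$ is an isomorphism via Lemma~\ref{lemma:F}, applies the implicit function theorem, and reads off $t\,v_t$ as the $\lambda$-derivative at $\lambda=0$.

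Your proposal is the same argument, with one cosmetic difference: you subtract the fixed solution and work with $w = u_\lambda - u$ in the zero-trace subspace, whereas the paper (and \cite{ChillFasangova2009}) keeps the full function and encodes the initial condition as a second component of the nonlinear map. Both formulations lead to the same linearized operator $v\mapsto v'+DF(u)v$ with $v(0)=0$, and both invoke the implicit function theorem in the same way. Your treatment of the inductive step $\partial_\lambda^j u_\lambda|_{\lambda=1} = t^j u^{(j)} + (\text{lower order})$ and the passage from weighted to local unweighted regularity on compact subintervals is correct and standard. The concern you flag about the $C^\omega$ case with values in $X_1$ is legitimate bookkeeping, but it is handled in the references the paper already cites (e.g.\ \cite[Section~9.4]{PruessSimonett2016}, \cite{EscherSimonett2003}); the paper itself does not spell this out either.
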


\begin{remark}
In many versions of such regularity theorems the mapping property $\mathcal{F}\colon X_{\gamma,\mu} \rightarrow \mathcal{L}(X_1,X_0)$ is assumed, while the condition 
imposed in \cite[Theorem 9.1]{ChillFasangova2009} is weaker compared to other versions. 
\end{remark}

\section{Proofs of the main results}\label{sec:proofs}
In this section we prove our main results. For $1<p<\infty$ we set
\begin{align*}
X_0=\lpso \quad \hbox{and} \quad X_1=D(A_p).
\end{align*}

\subsection{Local well-posedness}
Similarly to \cite[Section 5]{HieberKashiwabara2015}, we define  for $1<p<\infty$ the bilinear map $F_p$ by 
\begin{align*}
F_p(v,v') := P_p(v \cdot \nabla_H v' + w(v)\partial_z v'),
\end{align*}
and set $F_p(v):=F_p(v,v)$. Since $u'=(v',w(v'))$ is divergence-free, we also obtain the representation 
$$
F_p(v,v') = P_p \div ( u'\otimes v).
$$
We start by collecting various facts concerning  the map $F_p$.

\begin{lemma}\label{lemma:Fp}
There exists a constant $C>0$, depending only on $\Omega$, $p\in (1,\infty)$ and $s\geq 0$, such that for $v,v'\in H^{s+1+1/p,p}(\Omega)^2$
	\begin{align*}
	\norm{F_p(v,v')}_{H^{s,p}(\Omega)^2}  &\leq C \norm{v}_{H^{s+1+1/p,p}}\norm{v'}_{H^{s+1+1/p,p}}, 
	\end{align*}
	i.e., $F_p(\cdot,\cdot)\colon H^{s+1+1/p,p}(\Omega)^2 \times H^{s+1+1/p,p}(\Omega)^2 \rightarrow H^{s,p}(\Omega)^2$ is a continuous bilinear map.	
\end{lemma}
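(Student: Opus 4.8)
\textbf{Proof proposal for Lemma~\ref{lemma:Fp}.}

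The plan is to estimate the two terms in $F_p(v,v') = P_p(v\cdot\nabla_H v' + w(v)\partial_z v')$ separately, using that $P_p$ is bounded on $H^{s,p}(\Omega)^2$ (which follows from the mapping properties of the hydrostatic Helmholtz projection recalled in Section~\ref{sec:pre}, together with the identification of $H^{s,p}$ as a restriction/retraction of Bessel potential spaces on the torus). So it suffices to bound $v\cdot\nabla_H v'$ and $w(v)\partial_z v'$ in $H^{s,p}(\Omega)^2$. The key structural points are: (i) $H^{s+1+1/p,p}(\Omega)$ embeds into a space on which pointwise multiplication is controlled — more precisely, by Sobolev embedding on the $3$-dimensional domain $\Omega$, $H^{1/p,p}(\Omega)\hookrightarrow L^r(\Omega)$ for a suitable $r$, and $H^{s,p}$ is a multiplication algebra modulo lower-order factors when paired against something with enough extra smoothness; and (ii) the vertical velocity $w(v)(x,y,z) = -\int_{-h}^z \div_H v(x,y,\xi)\,d\xi$ gains one horizontal derivative's worth of regularity in a weak direction but, crucially, involves $\div_H v$, so $w(v)$ is controlled in $H^{s+1/p,p}$ by $\norm{v}_{H^{s+1+1/p,p}}$, and the vertical integration does not cost regularity (it is a smoothing operation in $z$, bounded on the relevant anisotropic spaces, cf. the analysis of $w(v)$ in \cite{HieberKashiwabara2015}).

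For the first term, I would use a Kato--Ponce / fractional Leibniz estimate: for $s\geq 0$,
\begin{align*}
\norm{v\cdot\nabla_H v'}_{H^{s,p}} \leq C\big(\norm{v}_{H^{s,p_1}}\norm{\nabla_H v'}_{L^{p_2}} + \norm{v}_{L^{p_3}}\norm{\nabla_H v'}_{H^{s,p_4}}\big),
\end{align*}
with Hölder-conjugate choices $1/p_1+1/p_2 = 1/p_3+1/p_4 = 1/p$ chosen so that the Sobolev embeddings $H^{s+1+1/p,p}\hookrightarrow H^{s,p_1}$, $H^{1+1/p,p}\hookrightarrow L^{p_2}$, $H^{1/p,p}\hookrightarrow L^{p_3}$ and $H^{s+1+1/p,p}\hookrightarrow H^{s+1,p_4}$ all hold on the $3$-dimensional $\Omega$; the "$+1/p$" surplus is exactly what makes the endpoint Sobolev numerology work in dimension $3$ (this is the same budget used in \cite[Section 5]{HieberKashiwabara2015}). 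For the second term one argues identically, writing $\norm{w(v)\partial_z v'}_{H^{s,p}}$ with a fractional Leibniz split and then invoking the bound $\norm{w(v)}_{H^{s+1+1/p,p}\text{-type}} \leq C\norm{v}_{H^{s+1+1/p,p}}$ coming from $w(v) = -\int_{-h}^{\cdot}\div_H v\,d\xi$ — here one must be slightly careful that the vertical antiderivative maps the relevant (possibly anisotropic) space boundedly into itself, which is where citing the corresponding lemma from \cite{HieberKashiwabara2015} is cleanest. Finally, I would note that for $s\notin\N$ one works on the extended torus via the even/odd extensions described in the proof of Lemma~\ref{lem:real} so that the fractional Leibniz rule applies without boundary complications, and that $F_p(v,v')$ is hydrostatically solenoidal by construction since $P_p$ projects onto $\lpso$.

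The main obstacle I expect is the bookkeeping in (ii): verifying that $w(v)$, built from a vertical integral of $\div_H v$, lands in a space with enough regularity to be multiplied against $\partial_z v'\in H^{s+1/p,p}$ and still produce something in $H^{s,p}$ — the vertical integration trades a $z$-derivative's worth of regularity for nothing, but one needs the anisotropic estimate that integration in $z$ is bounded on $H^{s,p}$ in the horizontal variables uniformly, and that the product structure (one factor smooth in $z$, one factor being a $z$-derivative) closes. Everything else is a routine, if slightly tedious, chase through Sobolev embeddings and the fractional Leibniz rule in dimension $3$ with the extra $1/p$ of smoothness built into the hypothesis. The $s=0$ case is the cleanest and essentially already appears in \cite{HieberKashiwabara2015}; the contribution here is the uniform-in-$s$ version, which I would present by carrying the parameter $s$ through the same argument.
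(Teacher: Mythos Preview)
Your approach is correct in spirit but genuinely different from the paper's. You propose to attack all $s\geq 0$ at once via a fractional Leibniz (Kato--Ponce) estimate on the extended torus, carrying the parameter $s$ through the anisotropic analysis of the $w(v)\partial_z v'$ term. The paper instead proceeds in two steps: first it proves the estimate only for integer $s=m\in\N_0$ by induction, using the base case $m=0$ from \cite[Lemma 5.1]{HieberKashiwabara2015} and the elementary identity $\partial_i F_p(v,v') = F_p(\partial_i v,v') + F_p(v,\partial_i v')$ for the induction step; then it invokes Bergh's bilinear complex interpolation theorem \cite{Bergh1984} to pass to non-integer $s$. The paper's route is more economical: it never re-enters the anisotropic estimates for $s>0$, and it avoids the Kato--Ponce machinery entirely. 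Your route is more direct but heavier, and your own caveat about the $w(v)$ term is well placed --- the isotropic product numerology $H^{s+1/p,p}\cdot H^{s+1/p,p}\hookrightarrow H^{s,p}$ fails for small $s$ in dimension $3$, so you would indeed have to rerun the anisotropic splitting of \cite{HieberKashiwabara2015} at each $s$, which the paper's induction-plus-interpolation argument sidesteps completely.
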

\begin{proof}
The assertion is proved first inductively for the case $s=m\in \N_0$. A complex interpolation result for non-linear operators due to Bergh \cite{Bergh1984}, 
which, due to the bilinear structure of $F_p(\cdot,\cdot)$, is applicable completes the proof. 

The induction basis $m=0$ follows as in \cite[Lemma 5.1]{HieberKashiwabara2015} using anisotropic estimates and the bilinearity of $F_p(\cdot,\cdot)$. To prove the induction step, observe that 
\begin{align*}
\norm{F_p(v,v')}_{H^{m+1,p}} \leq \norm{\nabla F_p(v,v')}_{H^{m,p}}  + \norm{F_p(v,v')}_{H^{m,p}} 
\end{align*}
by using 
\begin{align*}
\partial_i F_p(v,v') = F_p(\partial_i v,v') + F_p(v,\partial_i v'), \quad i\in \{x,y,z\}  
\end{align*}
and $\norm{\partial_i v}_{H^{m+1+1/p,p}} \leq C \norm{v}_{H^{m+2+1/p,p}}$.
\end{proof}

Recalling that  $X_{\beta}$ is given by $X_\beta= [\lpso, D(A_p)]_{\beta}$ we obtain  
$$
[\lpso, D(A_p)]_{(1+1/p)/2}\subset H^{1+1/p,p}(\Omega)^2,
$$ 
which yields the following corollary of Lemma~\ref{lemma:Fp}.

\begin{corollary}\label{cor:nonlinearity}
Let  $\beta = \tfrac{1}{2}(1+1/p)$. Then there exists a constant $C>0$, independent of $v,v'$, such that
	\begin{align*}
	\norm{F_p(v)-F_p(v')}_{\lpso} \leq C \left(\norm{v}_{X_{\beta}} + \norm{v'}_{X_{\beta}} \right) \norm{v-v'}_{X_{\beta}}.
	\end{align*}
\end{corollary}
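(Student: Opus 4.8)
The plan is to derive Corollary~\ref{cor:nonlinearity} as a direct consequence of Lemma~\ref{lemma:Fp} applied with $s=0$, combined with the embedding of the complex interpolation space $X_\beta = [\lpso, D(A_p)]_\beta$ into $H^{1+1/p,p}(\Omega)^2$ for $\beta = \tfrac12(1+1/p)$. First I would justify this embedding: since $D(A_p) \hookrightarrow H^{2,p}_{per}(\Omega)^2$ and $\lpso \hookrightarrow L^p(\Omega)^2 = H^{0,p}(\Omega)^2$, the complex interpolation identity for Bessel potential scales (\cite[Section 4.3]{Triebel1978}) gives $[L^p(\Omega)^2, H^{2,p}(\Omega)^2]_\beta = H^{2\beta,p}(\Omega)^2 = H^{1+1/p,p}(\Omega)^2$, and hence by interpolating the two embeddings we obtain $X_\beta \hookrightarrow H^{1+1/p,p}(\Omega)^2$ with a norm bound $\norm{v}_{H^{1+1/p,p}} \le C\norm{v}_{X_\beta}$. (This is exactly the displayed inclusion stated just before the corollary.)

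Next I would exploit the bilinearity. Writing $F_p(v) - F_p(v') = F_p(v,v) - F_p(v',v')$ and inserting the mixed term,
\begin{align*}
F_p(v) - F_p(v') = F_p(v, v - v') + F_p(v - v', v'),
\end{align*}
so that, applying Lemma~\ref{lemma:Fp} with $s=0$ to each of the two terms,
\begin{align*}
\norm{F_p(v) - F_p(v')}_{L^p(\Omega)^2} &\le \norm{F_p(v, v-v')}_{L^p} + \norm{F_p(v-v', v')}_{L^p} \\
&\le C\norm{v}_{H^{1+1/p,p}}\norm{v-v'}_{H^{1+1/p,p}} + C\norm{v-v'}_{H^{1+1/p,p}}\norm{v'}_{H^{1+1/p,p}}.
\end{align*}
Finally I would feed in the interpolation embedding on the right-hand side, bounding each $H^{1+1/p,p}$ norm by the corresponding $X_\beta$ norm, and note that $\norm{\,\cdot\,}_{\lpso}$ restricted to divergence-free fields is just the $L^p(\Omega)^2$ norm, so the left-hand side is unchanged. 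Collecting the two terms and factoring out $\norm{v-v'}_{X_\beta}$ yields
\begin{align*}
\norm{F_p(v) - F_p(v')}_{\lpso} \le C\bigl(\norm{v}_{X_\beta} + \norm{v'}_{X_\beta}\bigr)\norm{v-v'}_{X_\beta},
\end{align*}
which is the claim.

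There is no real obstacle here; the only points requiring a little care are (i) confirming that the complex interpolation space $[\lpso, D(A_p)]_\beta$ indeed embeds into $H^{1+1/p,p}$ rather than merely into some interpolation space of the imposed boundary-condition subspaces — but since we only need an embedding (an inequality of norms), it suffices to interpolate the ambient embeddings $\lpso \hookrightarrow L^p$ and $D(A_p) \hookrightarrow H^{2,p}$, which is immediate; and (ii) that the value $\beta = \tfrac12(1+1/p)$ is exactly the one making $2\beta = 1 + 1/p$, matching the smoothness index $s+1+1/p$ in Lemma~\ref{lemma:Fp} at $s=0$. Everything else is the standard "polarization" trick for quadratic nonlinearities, so the proof is short.
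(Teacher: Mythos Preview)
Your proof is correct and follows exactly the route the paper indicates: establish the embedding $X_\beta = [\lpso, D(A_p)]_{(1+1/p)/2} \hookrightarrow H^{1+1/p,p}(\Omega)^2$ and then apply Lemma~\ref{lemma:Fp} with $s=0$ via the bilinear polarization identity. The paper leaves the polarization step implicit, but your explicit decomposition $F_p(v)-F_p(v') = F_p(v,v-v') + F_p(v-v',v')$ is precisely what is intended.
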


Local well-posedness for the primitive equations follows now from Theorem~\ref{thm:pruesswilke} by using Remarks~\ref{rem:pruesswilke} $(a)$ and $(b)$ for the conditions $(S)$ and $(H1)$ and 
Corollary~\ref{cor:nonlinearity} for the conditions $(H2)$ and $(H3)$. 

\begin{proposition}[Local well-posedness]\label{prop:loc}
Let $p,q\in (1,\infty)$ with $1/p + 1/q\leq 1$, $\mu \in  [1/p + 1/q,1]$ and  $T>0$. Assume that
		\begin{align*}
		v_0\in X_{\mu-1/q,q} \qquad \hbox{and} \qquad P_p f \in L^{q}_{\mu}(0,T; \lpso).
		\end{align*}
		Then there exists $T'=T'(v_0)$ with $0<T'\leq T$ and a unique, strong solution $v$ to \eqref{eq:primequiv} on $(0,T')$ with
		\begin{align*}
		v\in H^{1,q}_{\mu}(0,T';\lpso) \cap L^{q}_{\mu}(0,T'; D(A_p)).
		\end{align*}
		
\end{proposition}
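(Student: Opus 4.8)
The plan is to verify that the abstract semilinear framework of Theorem~\ref{thm:pruesswilke} applies to the primitive equations in the form \eqref{eq:primequiv} with the choices $X_0 = \lpso$, $X_1 = D(A_p)$, and $A = -A_p$, and then simply invoke that theorem. After applying the hydrostatic Helmholtz projection $P_p$ to \eqref{eq:primequiv}, the pressure term $\nabla_H \pi_s$ is annihilated and the equation becomes $v' - A_p v = -F_p(v) + P_p f$, which is exactly \eqref{eq:abstrpb} with $F(v) = -F_p(v)$ and $f$ replaced by $P_p f$. One must check that the constraint $\div_H \overline{v} = 0$ is automatically encoded: this holds because $v(t)$ lies in $\lpso = X_0$ and in $D(A_p) = X_1$ for a.e.\ $t$, and $\lpso$ consists precisely of hydrostatically solenoidal fields, so no separate divergence equation needs to be tracked.

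Next I would check the hypotheses $(H1)$, $(H2)$, $(H3)$ and $(S)$ one by one. For $(S)$ and $(H1)$ this is already done in the excerpt: Remark~\ref{rem:pruesswilke}(b) records that $\lpso$ is UMD and that $A_p - \lambda \in \mathcal{H}^\infty(X_0)$ with $H^\infty$-angle $0 < \pi/2$ for $\lambda > 0$, which gives condition $(S)$ via Remark~\ref{rem:pruesswilke}(a) and condition $(H1)$ (maximal $L^q$-regularity) via \cite{GigaGriesHusseinHieberKashiwabara2016}. For $(H2)$, Corollary~\ref{cor:nonlinearity} provides exactly the required Lipschitz-type estimate
\[
\norm{F_p(v) - F_p(v')}_{X_0} \leq C\bigl(\norm{v}_{X_\beta} + \norm{v'}_{X_\beta}\bigr)\norm{v - v'}_{X_\beta}
\]
with $\beta = \tfrac12(1 + 1/p)$, matching the bilinear growth structure postulated in $(H2)$ (up to the evident typo of repeating $\norm{u_1}_{X_\beta}$ there); in particular $F_p$ maps $X_\beta \to X_0$ continuously. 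For $(H3)$ one needs $2\beta - 1 + 1/q \leq \mu$; substituting $\beta = \tfrac12(1 + 1/p)$ gives $2\beta - 1 = 1/p$, so the condition reads $1/p + 1/q \leq \mu$, which is precisely the hypothesis $\mu \in [1/p + 1/q, 1]$ of the proposition.

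With all four hypotheses verified, Theorem~\ref{thm:pruesswilke} applies directly with $u_0 = v_0 \in X_{\gamma,\mu} = (X_0, X_1)_{\mu - 1/q, q} = X_{\mu - 1/q, q}$ and $P_p f \in L^q_\mu(0,T; X_0)$, yielding a time $T' = T'(v_0) \in (0, T]$ and a unique solution $v \in H^{1,q}_\mu(0,T'; \lpso) \cap L^q_\mu(0,T'; D(A_p))$ of the projected equation that depends continuously on the data. It remains only to observe that this $v$, together with the pressure $\pi_s$ reconstructed as in Remark~\ref{rem:main}(d) (equivalently \cite[Equation (6.2)]{HieberKashiwabaraHussein2016}), solves the original system \eqref{eq:primequiv} with boundary conditions \eqref{eq:bc}: the boundary conditions on $\Gamma_D, \Gamma_N$ are built into $D(A_p)$, horizontal periodicity is built into the $C^\infty_{per}$-closures defining the spaces, and $I - P_p$ applied to the equation recovers the gradient structure of the pressure term.

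Honestly, there is almost no obstacle here once the machinery of Sections~\ref{sec:maxreg} and the nonlinear estimates of Lemma~\ref{lemma:Fp}/Corollary~\ref{cor:nonlinearity} are in place — the proposition is essentially a bookkeeping exercise matching notation. The one point deserving care is the identification of the abstract trace space $X_{\gamma,\mu}$ with the concrete Besov space $X_{\mu-1/q,q}$ from Lemma~\ref{lem:real}, i.e.\ confirming $\mu - 1/q \in (0,1)$ under the standing assumptions so that the interpolation identity and the boundary-condition description of Lemma~\ref{lem:real} are valid; since $1/p + 1/q \leq \mu \leq 1$ and $p > 1$ force $1/q < \mu \leq 1$, we indeed have $0 < \mu - 1/q < 1$, so Lemma~\ref{lem:real} applies and the statement is complete.
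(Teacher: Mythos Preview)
Your proposal is correct and matches the paper's own proof, which is the single sentence preceding the proposition: local well-posedness follows from Theorem~\ref{thm:pruesswilke} once $(S)$ and $(H1)$ are verified via Remarks~\ref{rem:pruesswilke}(a),(b) and $(H2)$, $(H3)$ via Corollary~\ref{cor:nonlinearity} with $\beta=\tfrac12(1+1/p)$. Your write-up simply spells out the verification of $(H3)$ and the identification of the trace space more explicitly than the paper does.
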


\subsection{Time and space regularity}

Define $F(v):=A_p v + F_p(v)$. 

\begin{lemma}\label{lemma:F}
Let $p,q\in (1,\infty)$ with $1/p + 1/q\leq 1$, $\mu \in  [1/p + 1/q,1]$, $T>0$. Then the mapping 
 \begin{align*}
 \mathcal{F}\colon H_{\mu}^{1,q}(0,T;\lpso) \cap L_{\mu}^{q}(0,T;D(A_p)) \rightarrow L_{\mu}^{q}(0,T;\lpso), \quad v \mapsto F(v) 
 \end{align*} 
is continuously differentiable and even real analytic with
 \begin{align*}
 DF(v)h = A_p h + F_p(v,h) + F_p(h,v).
 \end{align*}	
Moreover, for any $g\in  L_{\mu}^{q}(0,T;\lpso)$ the equation
\begin{align*}
 \partial_t h  - DF(v)h = g, \qquad h(0)=0,
\end{align*}
admits a unique solution $h\in H_{\mu}^{1,q}(0,T;\lpso) \cap L_{\mu}^{q}(0,T;D(A_p))$ with
\begin{align*}
\norm{h}_{H_{\mu}^{1,q}(0,T;\lpso) \cap L_{\mu}^{q}(0,T;D(A_p))} \leq C(v,g)
\end{align*}	
where $C(v,g)>0$ remains bounded for $v\in H_{\mu}^{1,q}(0,T;\lpso) \cap L_{\mu}^{q}(0,T;D(A_p))$ and $g\in L_{\mu}^{q}(0,T;\lpso)$.
 \end{lemma}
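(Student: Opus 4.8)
The plan is to verify the three assertions of Lemma~\ref{lemma:F} in turn, leaning on Lemma~\ref{lemma:Fp}, Corollary~\ref{cor:nonlinearity}, and the maximal $L^q$-regularity of $A_p$ in time-weighted spaces.

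\textbf{Step 1: Analyticity of $\mathcal{F}$.} First I would observe that $v \mapsto A_p v$ is bounded linear from $L^{q}_\mu(0,T;D(A_p))$ into $L^{q}_\mu(0,T;\lpso)$, hence trivially real analytic. For the nonlinear part $v \mapsto F_p(v) = F_p(v,v)$, the key point is that $F_p(\cdot,\cdot)$ is a \emph{bounded bilinear} map. Using the embedding $\mathds{E}_{1,\mu}(0,T) \hookrightarrow L^{q}_\mu(0,T; X_\beta)$ combined with the embedding $H^{1-\beta,q}(\ldots)$ furnished by condition $(S)$ — or more directly, estimating $F_p(v)$ in $L^{q}_\mu(0,T;\lpso)$ via Corollary~\ref{cor:nonlinearity} and interpolating the two factors against $\mathds{E}_{1,\mu}$ — one sees that the induced Nemytskii-type operator $v \mapsto F_p(v)$ is a bounded homogeneous quadratic map between the relevant Banach spaces. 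A bounded quadratic map is automatically real analytic (it equals its own second-order Taylor polynomial), with $D[v\mapsto F_p(v)](v)h = F_p(v,h) + F_p(h,v)$ and vanishing derivatives of order $\geq 3$. Adding the linear term gives $DF(v)h = A_p h + F_p(v,h) + F_p(h,v)$ as claimed, and $\mathcal{F} \in C^\omega$. The only care needed is checking that the bilinear estimate of Lemma~\ref{lemma:Fp} with $s=0$ indeed lands $F_p$ in $\lpso$ after the $L^q_\mu$-in-time integration, which follows from $2\beta - 1 + 1/q \leq \mu$ (condition $(H3)$, equivalently the parameter constraint $1/p+1/q\leq\mu$) exactly as in the proof of Proposition~\ref{prop:loc}.

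\textbf{Step 2: Unique solvability of the linearized equation.} Here I would write $DF(v)h = A_p h + B_v(h)$ where $B_v(h) := F_p(v,h) + F_p(h,v)$, and view $\partial_t h + A_p h = B_v(h) + g$, $h(0)=0$, as a perturbation of the maximal-regularity problem for $A_p$. The operator $A_p$ has maximal $L^q$-regularity in the time-weighted spaces (Remark~\ref{rem:pruesswilke}(b) together with \cite[Theorem 3.2]{PruessSimonett2004}), so the solution operator $g \mapsto (\partial_t + A_p)^{-1}g$ is bounded from $L^{q}_\mu(0,T;\lpso)$ to $\mathds{E}_{1,\mu}(0,T)$ with $h(0)=0$. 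The remaining task is to handle the lower-order term $B_v$. Because $v$ is \emph{fixed} and $B_v$ is linear in $h$, Corollary~\ref{cor:nonlinearity} (or rather its bilinear source, Lemma~\ref{lemma:Fp} with $s=0$) gives $\|B_v(h)(t)\|_{\lpso} \leq C\|v(t)\|_{X_\beta}\|h(t)\|_{X_\beta}$; since $X_\beta$ is a complex interpolation space between $\lpso$ and $D(A_p)$ with $\beta = \tfrac12(1+1/p)$, an interpolation inequality of the form $\|h(t)\|_{X_\beta} \leq \varepsilon \|h(t)\|_{D(A_p)} + C_\varepsilon \|h(t)\|_{\lpso}$ lets one absorb the principal part into the left-hand side after choosing $\varepsilon$ small, then run a Gronwall/fixed-point argument on successive short subintervals (using absolute continuity of $t\mapsto \|v(t)\|_{X_\beta}^q$, finite since $v\in\mathds{E}_{1,\mu}\hookrightarrow L^q_\mu(X_\beta)$) to obtain existence on all of $(0,T')$. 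Uniqueness follows from the same estimate applied to a difference of two solutions with zero data.

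\textbf{Step 3: Uniformity of the constant.} The bound $\|h\|_{\mathds{E}_{1,\mu}(0,T)} \leq C(v,g)$ with $C(v,g)$ bounded for $v,g$ in bounded sets is read off by tracking constants through Step~2: the number of subintervals needed in the Gronwall argument depends only on $\int_0^T \|v(t)\|_{X_\beta}^q\,dt$ and hence only on $\|v\|_{\mathds{E}_{1,\mu}}$, while the per-interval estimate is linear in $\|g\|_{L^q_\mu(\lpso)}$ and in the maximal-regularity constant of $A_p$, which is independent of $v$ and $g$. I expect \textbf{Step 2} — specifically the absorption of the quasilinear perturbation $B_v$ via interpolation and the localized Gronwall iteration — to be the main obstacle, since it is the only place where the time-weight $\mu$, the value of $\beta$, and the constraint $1/p+1/q\leq\mu$ must all be balanced simultaneously; everything else is either bookkeeping or a direct citation. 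An alternative to Step~2 that avoids the iteration entirely is to invoke Theorem~\ref{thm:pruesswilke} itself in its linear (indeed affine) form, noting that $B_v$ satisfies a Lipschitz bound of exactly type $(H2)$ with the fixed profile $v$ playing the role of the nonlinearity's argument, so that the abstract local existence, once complemented by the a priori estimate above to rule out blow-up, yields the solution on the full interval.
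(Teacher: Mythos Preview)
Your overall architecture matches the paper's: Step~1 is correct, and the paper proceeds the same way, expanding $\mathcal{F}(v+h)$ and showing the remainder $F_p(h,h)$ is quadratic in $\|h\|_{\mathds{E}_{1,\mu}}$. The high-level plan of Step~2---treat $B_v(h)=F_p(v,h)+F_p(h,v)$ as a perturbation of the maximal-regularity problem for $A_p$ and close by contracting on successive short subintervals---is also the paper's strategy, and your Step~3 reads off the dependence of the constant correctly.

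The concrete mechanism you propose in Step~2, however, has a gap. The pointwise Young-type interpolation $\|h(t)\|_{X_\beta}\le\varepsilon\|h(t)\|_{X_1}+C_\varepsilon\|h(t)\|_{X_0}$ leaves you with a term $C\varepsilon\,\|v(t)\|_{X_\beta}\|h(t)\|_{X_1}$ whose $L^q_\mu$-norm cannot be absorbed by shrinking $\varepsilon$, because the coefficient $\|v(t)\|_{X_\beta}$ is \emph{not} in $L^\infty_t$: the trace embedding only gives $v\in C([0,T];X_{\mu-1/q,q})$, and at the critical weight $\mu=1/p+1/q$ one has $\mu-1/q=1/p<\beta=\tfrac12(1+1/p)$. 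Similarly, the embedding $\mathds{E}_{1,\mu}\hookrightarrow L^q_\mu(0,T;X_\beta)$ you invoke is too weak to place the product $\|v\|_{X_\beta}\|h\|_{X_\beta}$ in $L^q_\mu$ by H\"older---each factor must lie in $L^{2q}$ with an adjusted weight.

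What the paper uses instead is the sharper embedding (coming from condition~(S) together with Sobolev and Hardy)
\[
\{u\in\mathds{E}_{1,\mu}(J):u(0)=0\}\hookrightarrow L^{2q}_{\vartheta}(J;X_\beta),\qquad \vartheta=\tfrac{1+\mu}{2},
\]
with embedding constant \emph{independent of} $|J|$. H\"older then yields $\|B_v(h)\|_{\mathds{E}_{0,\mu}(J)}\le C\|v\|_{\mathds{E}_{1,\mu}(J)}\|h\|_{\mathds{E}_{1,\mu}(J)}$ whenever $h$ vanishes at the left endpoint, and the contraction constant is made small by partitioning $(0,T)$ into subintervals on which $\|v\|_{\mathds{E}_{1,\mu}}$ falls below a fixed threshold (your ``absolute continuity'' remark points in exactly this direction, but applied to $\|v\|_{\mathds{E}_{1,\mu}(T_i,T_{i+1})}$). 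The paper also exploits linearity, scaling $h$ and $g$ by $1/n$ to force the self-mapping of the contraction ball. Finally, your proposed alternative of applying Theorem~\ref{thm:pruesswilke} directly is precisely what the paper flags as unavailable: $h\mapsto B_v(h)$ does not satisfy (H2) because $v$ lives in $\mathds{E}_{1,\mu}$, not pointwise in $X_{\gamma,\mu}$ or $X_\beta$.
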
	
 
\begin{proof}
 In order to prove the existence of the Fr\'{e}chet derivative of $\mathcal{F}$, note that the bilinearity of $F_p(\cdot)$ yields  
 	\begin{align*}
 	\mathcal{F}(v+h) = \mathcal{F}(v) + A_p h+ F_p(v,h) + F_p(h,v) + F_p(h,h).
 	\end{align*}
Proceeding similarly to \cite{PruessWilke2016} we set for $h\in \mathds{E}_{1,\mu}(0,T)$
 	\begin{align*}
 	h_0^*:= e^{tA_p}h(0) \qquad \hbox{and} \qquad h':= h -h_0^*.
 	\end{align*}
Hence, $h= h_0^* + h'$ with $h_0^*(0)=h(0) \in X_{\gamma,\mu}$ and $h'(0)=0$. As observed in \cite{PruessWilke2016}, the assumption $(S)$, Sobolev's embeddings and Hardy's inequality, 
imply
 			\begin{align*}
 			\{u\in \mathds{E}_{1,\mu}(0,T)\mid u(0)=0 \} &\hookrightarrow  \{u\in H^{1-\beta,p}_{\mu}(0,T; X_{\beta}) \mid u(0)=0 \} \\ &\hookrightarrow \{u\in H^{1-\beta-1/2p,2p}_{\mu}(0,T; X_{\beta}) \mid u(0)=0 \}  \hookrightarrow L^{2p}_{\sigma}(0,T; X_{\beta}), 
 			\end{align*} 
 where we use that $1-\beta-1/2p +\mu = \tfrac{1}{2}(1+\mu) =\vartheta$. Note that the embedding constants are independent of $T$. Hence,
 	\begin{align*}
 	\norm{F_p(h,h)}_{\mathds{E}_{0,\mu}} &\leq \norm{F_p(h',h')}_{\mathds{E}_{0,\mu}} + \norm{F_p(h',h_0^*)}_{\mathds{E}_{0,\mu}} + \norm{F_p(h_0^*,h')}_{\mathds{E}_{0,\mu}}  + \norm{F_p(h_0^*,h_0^*)}_{\mathds{E}_{0,\mu}}  \\
 	&\leq C \left(\norm{h'}^2_{L^{2q}_{\vartheta}(0,T;X_{\beta})} + \norm{h'}_{L^{2q}_{\vartheta}(0,T;X_{\beta})}\norm{h_0^*}_{L^{2q}_{\vartheta}(0,T;X_{\beta})} +\norm{h^*_0}^2_{L^{2q}_{\vartheta}(0,T;X_{\beta})}\right) \\
 	&\leq C \left(\norm{h'}^2_{\mathds{E}_{1,\mu}} + \norm{h'}_{\mathds{E}_{1,\mu}}\norm{h(0)}_{X_{\gamma, \mu}} +\norm{h(0)}^2_{X_{\gamma,\mu}} \right) \leq C \norm{h}^2_{\mathds{E}_{1,\mu}},
 	\end{align*}
 	using $\norm{h^*_0}_{L^{2q}_{\vartheta}(0,T;X_{\beta})}\leq C \norm{h(0)}_{X_{\gamma,\mu}}\leq C \norm{h}_{\mathds{E}_{1,\mu}}$, and $\norm{h'}_{\mathds{E}_{1,\mu}}\leq \norm{h}_{\mathds{E}_{1,\mu}} + \norm{h^*_0}_{\mathds{E}_{1,\mu}}\leq C \norm{h}_{\mathds{E}_{1,\mu}}$, since $\norm{h^*_0}_{\mathds{E}_{1,\mu}} \leq C \norm{h(0)}_{X_{\gamma,\mu}} \leq C \norm{h}_{\mathds{E}_{1,\mu}}$, compare \cite[Proof of Proposition 3.4.2 and 3.4.3, Theorem 3.4.8]{PruessSimonett2016}.
For this reason,  $\norm{F(h,h)}_{\mathds{E}_{0,\mu}}/\norm{h}_{\mathds{E}_{1,\mu}} \to 0$ for $\norm{h}_{\mathds{E}_{1,\mu}}\to 0$. Similarly, we show that the map $v\mapsto DF(v)$ is continuous. 
Clearly, since $\mathcal{F}$ is quadratic in $v$, it is real analytic.    
 	
It remains to prove the (global) solvability in $\mathds{E}_{1,\mu}(0,T)$ of 
 	\begin{align*}
 	\partial_t h  - DF(v)h = g, \qquad h(0)=0.
 	\end{align*}
It seems that \cite[Theorem 1.2]{PruessWilke2016} cannot be applied directly because $v\mapsto DF(v)$ is well defined for $v\in \mathds{E}_{1,\mu}(0,T)$, but it is not 
for $v\in X_{\gamma,\mu}$. However, the assertion  can be proven by adapting the methods used in \cite{PruessWilke2016} and by using the linearity of the equation. 

First, we define the reference function $h_0^*\in \mathds{E}_{1,\mu}(0,T)$ for $h_0\in X_{\gamma,\mu}$ as the solution of the inhomogeneous linear problem
 	\begin{align*}
 		u^{\prime} + A_p u = g, \qquad u(0)=h_0, \quad \hbox{i.e.} \quad  h_0^*(t)= e^{tA_p}h_0 + \int_0^t e^{(t-s)A_p}g(s) ds.
 		\end{align*}
 Then, defining the ball
 \begin{align*}
 \mathds{B}_{r,T',h_0} := \{u\in \mathds{E}_{1,\mu}(0,T) \colon u(0)=h_0 \hbox{ and } \norm{u - h_0^*}_{\mathds{E}_{1,\mu}(0,T')} \leq r   \}  \subset\mathds{E}_{1,\mu}(0,T) ,  \quad r\in (0,1].
 \end{align*}
 we consider the map
 		\begin{align*}
 		\cT_{h_0} \colon  \mathds{B}_{r,T',h_0} \rightarrow \mathds{E}_{1,\mu}(0,T),  \quad \cT_{h_0} h = u,
 		\end{align*}
 where $u$ is the unique solution in $\mathds{E}_{1,\mu}(0,T)$ to the linear problem 
 		\begin{align*}
 		u^{\prime} + A_p u = g - F_p(v,h) - F_p(h,v), \qquad u(0)=h_0.
 		\end{align*}
 Choosing $r\in (0,1]$ and $0<T'\leq T$ appropriately, the mapping $\cT_{h_0}$ restricts to a contractive self-mapping on $\mathds{B}_{r,T',h_0}$.

As above, writing  $h= h' + h_0^*$ and $v= v' + v_0^*$, we obtain
\begin{align*}
\norm{F_p(v,h)+F_p(h,v)}_{\mathds{E}_{0,\mu}} \leq \norm{F_p(v,h)}_{\mathds{E}_{0,\mu}}  + \norm{F_p(h,v)}_{\mathds{E}_{0,\mu}} \leq C \norm{v}_{\mathds{E}_{1,\mu}} \left( \norm{h-h_0^*}_{\mathds{E}_{1,\mu}} + \norm{h(0)}_{X_{\gamma,\mu}} \right),
\end{align*} 	
and similarly
\begin{align*}
 	\norm{\cT_{h_0}h_1 -\cT_{h_0}h_2}_{\mathds{E}_{1,\mu}} &\leq C \norm{F(v,h_1-h_2) - F(h_1-h_2,v)}_{\mathds{E}_{0,\mu}} \\
 	&\leq C \norm{v}_{\mathds{E}_{1,\mu}} \left( \norm{h_1-h_2}_{\mathds{E}_{1,\mu}} \right).
 	\end{align*}
Now, $\norm{v}_{\mathds{E}_{1,\mu}(0,T)}$ is finite and $\norm{v}_{\mathds{E}_{1,\mu}(T_1,T_2)}\to 0$ for $|T_2-T_1| \to 0$. Therefore, for $\varepsilon>0$ there is a finite partition $0=T_0 < T_1 < \ldots <T_n=T$ of $(0,T)$ such that 
\begin{align*}
\norm{v}_{\mathds{E}_{1,\mu}(T_i,T_{i+1})} < \varepsilon, \quad  i \in \{0, \ldots n-1\}.
\end{align*}
Choosing $\varepsilon < 1/2C$ and by replacing -- using the linearity -- $h(T_i)$ and $g$ by $h_n(T_i)=h(T_i)/n$, $g_n=g/n$, respectively, $n\in \N$, and 
making relevant norms sufficiently small, we prove global existence iteratively in finitely many steps, where the norm of $h$ is controlled by $r$ in each step. 
Therefore, $\norm{h}_{\mathds{E}_{1,\mu}(0,T)}$ depends on the partition used above, in particular on the number of steps to reach the global solution and the norms of $h(T_i)$ and 
$g$ in $X_{\gamma,\mu}$ and $ L_{\mu}^{q}(0,T;\lpso)$, respectively.
 \end{proof}

\vspace{.2cm}\noindent 	
 \begin{proof}[Proof of Theorem~\ref{thm:time}]
The assertions $(a)$ and $(b)$ follow from Theorem~\ref{thm:timereg} by using Lemma~\ref{lemma:F}. 

Note that Theorem~\ref{thm:timereg} is based on the Banach space version of the implicit function theorem applied to maps of the type $(u(t,\cdot),\lambda)\mapsto \lambda F(u)(\lambda t, \cdot)$. Now, in order to prove $(c)$, the implicit function theorem needs to be applied to both space and time variables. For the most direct approach we need  that
 	\begin{align*}
 	\Phi_{\lambda, \eta} \colon  (0,\infty) \times \Omega\rightarrow (0,\infty) \times \Omega, \quad (t,x) \mapsto (\lambda \cdot t, x + t\eta), \quad \lambda \in (0,\infty), \quad \eta\in \R^3,
 	\end{align*} 
defines an isomorphism for parameters satisfying $\abs{\lambda - 1} <\epsilon$ and $\norm{\eta}<\epsilon$, $\epsilon>0$, see e.g.\cite[Section 5]{Pruess2002}. 
This is not true for general domains, but it is true for the whole space and also for the torus taking into account periodicity.  
 	
 	The strategy applied here is to first prove analyticity with respect to the horizontally periodic $x,y$ variables, thus proving analyticity of the pressure, and secondly, to 
apply a localization procedure in $z$ direction close to $\Gamma_D\neq \emptyset$, while on $\Gamma_N$ solutions are extended by even reflection onto a larger domain.     
 	
To this end, let $\Omega_{per} = \Omega\cup \Gamma_l$ be equipped with the topology of $S^1\times S^1 \times (-h,0)$, where $S^1=\R /\Z$, that is, taking into account 
lateral periodicity which induces a group structure in the lateral direction. Then  
 	\begin{align*}
 	\Phi_{\lambda, \eta_H} \quad \hbox{for} \quad \eta_H=(\mu_x,\mu_y,0), \quad \eta_x,\eta_y\in S^1
 	\end{align*}
   defines an isomorphism on $\Omega_{per}$  and for $v_{\lambda, \eta_H}= v\circ\Phi_{\lambda, \eta_H}$ we obtain by the chain rule
   	\begin{align*}
   	\partial_t v_{\lambda, \eta_H} = \lambda (\partial_t v)\circ \Phi_{\lambda, \eta_H} + \eta_H \cdot (\nabla v)\circ \Phi_{\lambda, \eta_H}.
   	\end{align*} 
Moreover, for $\epsilon >0$,  we define the real analytic map  
 		\begin{align*}
 		H\colon \mathds{E}_{1,\mu}\times (1-\epsilon, 1+\epsilon) \times (-\epsilon,\epsilon)^2 \rightarrow \mathds{E}_{0,\mu} \times X_{\gamma,\mu} 
 		\end{align*}
 	by
 		\begin{align*}
 		H(\lambda, \eta_H, v):= (\partial_t v_{\lambda, \eta_H} -  \lambda (A_p v + F_p(v))_{\lambda, \eta_H}  - \eta_H \cdot \nabla v_{\lambda, \eta_H}, v_0 -v), 
 		\end{align*}
 		where the solution $v$ to \eqref{eq:primequiv} with initial data $v_0$ solves $H(1,0,v)=(0,0)$. 
 		
Note that $(A_p v + F_p(v))_{\lambda, \eta_H} = A_p v_{\lambda, \eta_H} + F_p(v_{\lambda, \eta_H})$. The Fr\'{e}chet derivative $\partial_v H$ is then an isomorphism by 
arguments similar to the ones given in the proof of Lemma~\ref{lemma:F} and by using that $H$ is polynomial in $v$. Therefore, the implicit function theorem yields 
that $v(\lambda t, x+ t \eta_H)$ is real analytic around $(1,0)$ in $\eta_H$ and $\lambda$. From this we  deduce real analyticity of $v$ around $(x,t)$ with respect to time and 
the horizontal directions, compare e.g. \cite[Section 5]{Pruess2002}. One can also adapt the approach in \cite{EscherSimonett2003} for 
locally symmetric spaces to the situation of a symmetry in only two space directions. In particular, this proves analyticity of the surface pressure $\pi_s$. 
 	
Concerning the $z$-direction, we note first that \eqref{eq:primequiv} is compatible with even reflections along the Neumann part of the boundary.
 Thus for $\Gamma_D=\emptyset$ solutions $v$ may be extended to the full torus -- a feature used in the literature dealing with Neumann boundary values, see e.g. \cite{LiTiti2016} -- 
and replacing $\eta_H$ by general $\eta\in \R^3$ in the above arguments implies  analyticity of solutions including the boundary. 
 	
 If $\Gamma_D\neq\emptyset$ we need to apply a localization procedure with respect to $z$-variable. The details of this method are neglected here and we refer to 
\cite[Section 9]{PruessSimonett2016} for details. 
 	
 Since the main non-locality in the primitive equation arise from the pressure term, we consider finally 
 	\begin{align*}
 	\partial_t v  - \Delta v  + v \cdot \nabla_H v + w(v) \cdot \partial_z v   = f_s , \quad f_s = f -\nabla_H \pi_s, \quad v(0)=v_0,
 	\end{align*} 
 	where $f_s$ is real analytic by the considerations above and the assumption on $f$.
The above proofs can now be adapted by using the fact that  the non-linearity $v \mapsto w(v)\partial_z + v\cdot \nabla_H v$ is real analytic.	
 \end{proof} 	
 
 \begin{remarks}
\begin{itemize}
\item[(a)]  A different strategy to prove smoothness, but not real analyticity, of solutions is to consider higher order time derivatives,  which are well defined according to 
Theorem~\ref{thm:time} (a) and (b). Then, by Lemma~\ref{lemma:Fp}
  		\begin{align*}
  		F_p(\cdot) \colon H^{s,p}(\Omega)^2 \rightarrow H^{s-(1+1/p),p}(\Omega)^2, \quad s \geq (1/2+1/2p), \quad p\in (1,\infty),
  		\end{align*}
  		is well-defined and bounded and  we write
  		\begin{align*}
  		\partial_t^{(n)} v  = A_p^{-1}( \partial_t^{(n)} F_p(v) - \partial_t^{(n+1)} v).
  		\end{align*}
 Since $\partial_t^{(n)} v\in D(A_p)$ for all $n\in \N_0$, we conclude first, that $\partial_t^{(n)} F_p(v)\in H^{2-(1+1/p),p}(\Omega)^2$. Secondly, applying elliptic regularity from 
Proposition~\ref{prop:ellipticregularity}, we conclude that $\partial_t^{(n)} v \in H^{(3 - 1/p),p}(\Omega)^2$. Iterating this argument, that is, 'trading time for space regularity' and 
using Sobolev embeddings we arrive at 
$$
v\in C^{\infty}((0,\infty); C_{per}^{\infty}(\overline{\Omega})^2),
$$
thereby proving smoothness including the boundary. \\
\item[(b)]  Another strategy for smoothness of solutions, namely proving first additional space regularity and deriving therefrom additional time regularity has been developed 
in \cite{GigaMiyakawa1985} in the case of the Navier-Stokes equations. 
\end{itemize}
\end{remarks}
 
The following elementary lemma is needed to extend regularity of solutions from $(0,T')$ for any $0<T'<T$ to $(0,T)$.

\begin{lemma}\label{lemma:T}
Let $v\in \mathds{E}_{1,\mu}(0,T')$ for any $0<T'<T$, and $\underset{0<T'<T}{\sup} \norm{v}_{\mathds{E}_{1,\mu}(0,T')}<C$ for some constant $C>0$. Then $v\in \mathds{E}_{1,\mu}(0,T)$. 
\end{lemma}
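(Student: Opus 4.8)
The plan is to treat Lemma~\ref{lemma:T} as a monotone convergence statement: every quantity entering the norm of $\mathds{E}_{1,\mu}(0,T')$ is an integral over $(0,T')$ of a fixed nonnegative function, hence nondecreasing in $T'$, so one simply lets $T'\uparrow T$. First I would fix the functional-analytic bookkeeping. Since $(0,T)=\bigcup_{0<T'<T}(0,T')$ and, by hypothesis, the restriction of $v$ to each $(0,T')$ lies in $L^1_{loc}((0,T');D(A_p))$ and has a distributional time derivative $v'\in L^1_{loc}((0,T');\lpso)$, locality of the distributional derivative forces these derivatives to agree on overlaps; thus $v\in L^1_{loc}((0,T);D(A_p))$ carries a well-defined distributional derivative $v'$ on $(0,T)$. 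Moreover, because $\mu\in(1/q,1]$, H\"older's inequality gives $t^{1-\mu}\in L^{q'}(0,T')$ with $(1-\mu)q'<1$ precisely because $\mu>1/q$, so that $L^q_\mu(0,T';X)\hookrightarrow L^1(0,T';X)$; consequently the $H^{1,1}$-part required in the definition of $H^{1,q}_\mu$ is automatically dominated by the weighted $L^q$-parts, and it suffices to control the latter.

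Next I would apply the monotone convergence theorem. For each $0<T'<T$,
\[
\int_0^{T'}\norm{t^{1-\mu}v(t)}_{D(A_p)}^{q}\,dt+\int_0^{T'}\norm{t^{1-\mu}v'(t)}_{\lpso}^{q}\,dt\ \le\ c\,\norm{v}_{\mathds{E}_{1,\mu}(0,T')}^{q}\ \le\ c\,C^{q},
\]
with $c$ a fixed constant coming from the (finitely many) equivalent pieces of $\norm{\cdot}_{\mathds{E}_{1,\mu}}$. As $T'\uparrow T$ the integrands are fixed nonnegative functions of $t$ and the intervals increase to $(0,T)$, so monotone convergence yields that the corresponding integrals over $(0,T)$ are finite and bounded by $c\,C^{q}$. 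Hence $t^{1-\mu}v\in L^q(0,T;D(A_p))$ and $t^{1-\mu}v'\in L^q(0,T;\lpso)$; combined with the embedding $L^q_\mu\hookrightarrow L^1$ noted above this gives $v\in H^{1,q}_{\mu}(0,T;\lpso)\cap L^{q}_{\mu}(0,T;D(A_p))=\mathds{E}_{1,\mu}(0,T)$, together with the quantitative bound $\norm{v}_{\mathds{E}_{1,\mu}(0,T)}\le C'$ for a suitable $C'$ depending only on $C$.

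There is no serious obstacle here — the statement is genuinely elementary. The only point that deserves a line of justification, and hence the "hard part" such as it is, is the gluing of the weak derivatives on the exhausting family $(0,T')$ into a single $v'$ on $(0,T)$ together with membership in $H^{1,1}(0,T;\lpso)$; both are handled, respectively, by locality of the distributional derivative and by the embedding $L^q_\mu(0,T;\lpso)\hookrightarrow L^1(0,T;\lpso)$, which holds exactly under the standing assumption $\mu>1/q$. No compactness, interpolation, or maximal regularity input is needed.
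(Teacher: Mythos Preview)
Your argument is correct and is essentially the paper's own proof: both establish measurability/well-definedness of $v_t$ and $A_p v$ on $(0,T)$ and then pass to the limit $T'\uparrow T$ in the weighted integrals via a convergence theorem (you say monotone convergence, the paper says dominated convergence; yours is the cleaner label here). Your write-up is in fact more careful than the paper's about the gluing of distributional derivatives and the $H^{1,1}$-requirement; one tiny slip is that in the H\"older step you need $t^{-(1-\mu)}\in L^{q'}(0,T')$, not $t^{1-\mu}\in L^{q'}$, though your stated condition $(1-\mu)q'<1\Leftrightarrow\mu>1/q$ is the correct one for that.
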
 	

\begin{proof}
First, note that $v_t$ and $A_pv$ are measurable functions on $(0,T)$ by considering these as point-wise limits of the extensions by zero of $v_t\mid_{(0,T')}$ and 
$A_p v\mid_{(0,T')}$, respectively. Secondly, by dominated convergence 
\begin{align*}
\int_0^{T} t^{(1-\mu)q}\norm{v_t - A_p v}^q_{\lpso} = \lim_{T'\to T} \int_0^{T'} t^{(1-\mu)q}\norm{v_t - A_p v}^q_{\lpso} < \infty.
\end{align*}
\end{proof} 
	
\goodbreak 	
 	
 	\begin{lemma}\label{lemma:vt}
 	Let $p,q,\mu$ and $v_0, P_pf$ be as in Proposition~\ref{prop:loc}. Assume that
 		\begin{align*}
 		v\in H^{1,q}_{\mu}(0,T;\lpso) \cap L^{q}_{\mu}(0,T; D(A_p))
 		\end{align*}
    is a solution to \eqref{eq:primequiv}. If in addition $t \mapsto t\cdot P_p f_t \in L^{q}_{\mu}(0,T;\lpso)$, then 
 	\begin{align*}
 	t \cdot v_t \in H^{1,q}_{\mu}(0,T;\lpso) \cap L^{q}_{\mu}(0,T;D(A_p)), \quad \norm{t \cdot v_t}_{H^{1,q}_{\mu}(0,T;\lpso) \cap L^{q}_{\mu}(0,T;D(A_p))} \leq C(v, f, f_t, T),
 	\end{align*}
 	for some  finite constant $C$.
 	\end{lemma}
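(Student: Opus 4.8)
The plan is to differentiate the equation in time and apply the time-regularity machinery (Theorem~\ref{thm:timereg}) together with the linear solvability from Lemma~\ref{lemma:F}. Formally, writing the primitive equations as $\partial_t v + F(v) = P_p f$ with $F(v)=A_p v + F_p(v)$ and differentiating once in time, the quantity $w:=t\cdot v_t$ should satisfy a linear parabolic problem of the form $\partial_t w - DF(v)w = t\cdot P_p f_t + v_t + [\text{lower order terms from the product rule applied to }t]$, with $w(0)=0$. The point is that by Lemma~\ref{lemma:F} the linear operator $\partial_t - DF(v)$ is invertible from $\{h\in \mathds{E}_{1,\mu}(0,T): h(0)=0\}$ onto $\mathds{E}_{0,\mu}(0,T)$, with norm bound depending only on $\norm{v}_{\mathds{E}_{1,\mu}}$. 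So the whole task reduces to checking that the right-hand side of the equation for $w$ lies in $\mathds{E}_{0,\mu}(0,T) = L^q_\mu(0,T;\lpso)$.

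The first step is to establish that $v_t\in \mathds{E}_{0,\mu}(0,T)$; but this is immediate since $v\in H^{1,q}_\mu(0,T;\lpso)$ by hypothesis, which means exactly $t^{1-\mu}v_t\in L^q(0,T;\lpso)$. The second step is the term $t\cdot P_p f_t$, which is in $\mathds{E}_{0,\mu}(0,T)$ precisely by the added hypothesis $t\mapsto t\cdot P_p f_t\in L^q_\mu(0,T;\lpso)$. The third and most delicate step concerns the cross terms coming from $\partial_t F_p(v) = F_p(v_t,v)+F_p(v,v_t)$: after multiplying by $t$ we need $t\cdot(F_p(v_t,v)+F_p(v,v_t))\in L^q_\mu(0,T;\lpso)$, and more importantly we want the equation for $w$ to actually be $\partial_t w - DF(v)w = \text{(something in }\mathds{E}_{0,\mu})$ rather than requiring $w$ itself inside the nonlinearity. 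The cleanest way is to invoke Theorem~\ref{thm:timereg} directly: by Lemma~\ref{lemma:F} the composition operator $\mathcal{F}$ is real analytic (in particular $C^2$), the linear problem $\partial_t h - DF(v)h=g$, $h(0)=0$ is uniquely solvable for every $g\in \mathds{E}_{0,\mu}$, and $P_pf$ together with the extra assumption places the right-hand side $f$ in $H^{1,q}_{\mu}$-type regularity needed so that the $k=1$ case of Theorem~\ref{thm:timereg} applies; this yields $t\cdot v_t = t\cdot v^{(1)}\in H^{1,q}_\mu(0,T';\lpso)\cap L^q_\mu(0,T';D(A_p))$ for each $T'<T$, with quantitative norm bound.

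The final step is to upgrade from $(0,T')$ to $(0,T)$: one shows the $\mathds{E}_{1,\mu}(0,T')$-norm of $t\cdot v_t$ stays bounded as $T'\uparrow T$, using the explicit constant $C(v,g)$ from Lemma~\ref{lemma:F} which remains bounded since $\norm{v}_{\mathds{E}_{1,\mu}(0,T)}<\infty$ and the right-hand side has fixed finite $\mathds{E}_{0,\mu}(0,T)$-norm, and then applies Lemma~\ref{lemma:T} (with $t\cdot v_t$ in place of $v$) to conclude $t\cdot v_t\in \mathds{E}_{1,\mu}(0,T)$. The main obstacle I expect is the bookkeeping in step three: making the application of Theorem~\ref{thm:timereg} rigorous requires verifying that the hypothesis on $f$ in the present lemma (only $t\cdot P_pf_t\in L^q_\mu$, not full $P_pf\in H^{1,q}_\mu$) is enough to run the parameter-trick argument, i.e. that the reference-function decomposition $f = f(0)^* + (\text{remainder})$ from the proof of Lemma~\ref{lemma:F} still produces a right-hand side in the correct space; alternatively one avoids Theorem~\ref{thm:timereg} altogether and argues by hand via difference quotients in time, $h:=t\cdot(v(\cdot+\tau)-v)/\tau$, showing these are uniformly bounded in $\mathds{E}_{1,\mu}$ using Lemma~\ref{lemma:F}'s linear estimate and passing to the limit — this is more elementary but requires the same Lipschitz estimates on $F_p$ from Corollary~\ref{cor:nonlinearity}.
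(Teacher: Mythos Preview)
Your proposal is correct and follows the paper's approach: identify the linear equation satisfied by $h=t v_t$ with $h(0)=0$, apply Lemma~\ref{lemma:F} for the $\mathds{E}_{1,\mu}(0,T')$-estimate, and then Lemma~\ref{lemma:T} to pass to $T'=T$. Two clarifications. First, there are no extra ``lower order'' or cross terms on the right-hand side: the product rule gives $\partial_t(tv_t)=v_t+tv_{tt}$, and the bilinear pieces $t\big(F_p(v_t,v)+F_p(v,v_t)\big)$ are exactly the $F_p$-part of $DF(v)(tv_t)$ and sit on the left; the right-hand side is simply $v_t+tP_pf_t$ (the paper writes it equivalently as $-(A_pv+F_p(v)-P_pf-tP_pf_t)$, using the equation for $v$). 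Second, on your flagged obstacle about the hypothesis on $f$: the paper does not quote Theorem~\ref{thm:timereg} as a black box but carries out the implicit-function-theorem step explicitly, applying it to
\[
G(\lambda,\nu)=\big(\nu'+(1+\lambda)F(\nu)-(1+\lambda)f_\lambda,\;\nu(0)-v_0\big),\qquad f_\lambda(t):=f\big((1+\lambda)t\big),
\]
on $(-\varepsilon,\varepsilon)\times\mathds{E}_{1,\mu}(0,T')$. The $C^1$-dependence of $\lambda\mapsto(1+\lambda)f_\lambda$ in $\mathds{E}_{0,\mu}(0,T')$ at $\lambda=0$ requires precisely $P_pf\in L^q_\mu$ and $tP_pf_t\in L^q_\mu$, so the weaker hypothesis is exactly what is needed; no decomposition of $f$ and no difference-quotient argument is required.
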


\begin{proof}
 	Let $v$ be a solution to \ref{eq:primequiv} in $\mathds{E}_{1,\mu}(0,T)$. Consider for $0<\varepsilon< \tfrac{T-T'}{T'}$, $0<T'<T$, 
 	the map  
\begin{align*}
G\colon (-\varepsilon, \varepsilon) \times \mathds{E}_{1,\mu}(0,T') \rightarrow \mathds{E}_{0,\mu}(0,T') \times X_{\gamma,\mu}, \quad (\lambda, \nu) \mapsto (\nu' +(1+\lambda)F(\nu) - (1+\lambda)f_{\lambda}, \nu(0)- v(0)),  
\end{align*} 	
 where $f_{\lambda}(t,\cdot):=f((1+\lambda)t,\cdot)$. As in \cite[Section 9.2]{ChillFasangova2009} one can prove that the implicit function theorem applies and there is an implicit function $g_{\lambda}(-\varepsilon',\varepsilon')\rightarrow \mathds{E}_{1,\mu}(0,T')$, $\varepsilon'\leq \varepsilon$ which solves $G(\lambda,g_{\lambda}(\lambda))=(0,0)$. By uniqueness we conclude that $g_{\lambda} = v_\lambda$. The implicit derivative at $\lambda=0$ is
 \begin{align*}
 	\partial_{\lambda}g_{\lambda}\vert_{\lambda=0}= t\cdot v_t = - (\partial_v G)(0,v) (A_p v +F_p(v) - f - t\cdot f_t,0),
 	\end{align*}
i.e., $t \mapsto -t\cdot v_t$ is the solution to the equation
 	\begin{align*}
 	\partial_t h - A_p h + F_p(h,v) + F_p(v,h) = A_p v +F_p(v) - f - t\cdot f_t, \quad h(0)=0, 
 	\end{align*}
 	and by Lemma~\ref{lemma:F}
 	\begin{align*}
 	\norm{t\cdot v_t}_{\mathds{E}_{1,\mu}(0,T')} \leq C(v, \norm{A_p v +F_p(v) - f - t\cdot f_t}_{\mathds{E}_{0,\mu}(0,T')}).
 	\end{align*}	
 Note that $\norm{A_p v +F_p(v) - f - t\cdot f_t}_{\mathds{E}_{0,\mu}} \leq C \left(\norm{v}_{\mathds{E}_{1,\mu}} + \norm{v}^2_{\mathds{E}_{1,\mu}} + \norm{f}_{\mathds{E}_{0,\mu}} + \norm{t\cdot f_t}_{\mathds{E}_{0,\mu}}\right)$. Now since $\norm{v}_{\mathds{E}_{1,\mu}(0,T)} + \norm{f}_{\mathds{E}_{0,\mu}(0,T)} + \norm{t\cdot f_t}_{\mathds{E}_{0,\mu}(0,T)}$ are bounded by assumption, $\sup_{T'<T}\norm{t\cdot v_t}_{\mathds{E}_{1,\mu}(0,T')}$ is bounded as well, and therefore $t\cdot v_t\in \mathds{E}_{1,\mu}(0,T)$ by Lemma~\ref{lemma:T}. 
 \end{proof}	
 
 	\begin{remark}
 		Note that in Lemma~\ref{lemma:vt} regularity of $t\cdot v_t$ is derived on $(0,T)$ while in Theorem~\ref{thm:time} it is proven on $(0,T')$ for any $T'<T$. Extending the regularity onto $(0,T)$ is possible due to the control on the implicit derivative by Lemma~\ref{lemma:F}. 
 	\end{remark}

\subsection{\textit{A priori} bounds in $H^1(0,T;L^2)\cap L^2(0,T;H^2)$}\label{sec:bdd}

\begin{theorem}[\textit{A priori bounds}]\label{thm: a priori estimate}
	There exists a continuous function 
	$B$ satisfying the following property:
	for any solution of \eqref{eq:primequiv} such that for $0<T <\infty$
		\begin{align*}
		v\in H^{1}(0,T;L^{2}_{\overline{\sigma}}(\Omega))) \cap L^{2}(0,T;D(A_2)), \quad v_0\in \{H^{1} \cap L^{2}_{\overline{\sigma}}(\Omega) \colon \restr{v}{\Gamma_D} = 0\}, \quad P_2 f \in L^2(0,T;L^{2}_{\overline{\sigma}}(\Omega))
		\end{align*}
     one has
   \begin{equation*}
    \norm{v}_{H^{1}(0,T;L^{2}_{\overline{\sigma}}(\Omega))) \cap L^{2}(0,T;D(A_2))} \leq B(\norm{v_0}_{H^1(\Omega)}, \norm{P_2 f}_{L^2(0,T;L^2(\Omega))}, T).
   \end{equation*}
\end{theorem}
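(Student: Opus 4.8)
The plan is first to reduce the assertion --- which is phrased in the maximal regularity norm of $H^1(0,T;L^2_{\overline\sigma}(\Omega))\cap L^2(0,T;D(A_2))$ --- to the Cao--Titi type energy bound
\begin{equation*}
\sup_{0\le t\le T}\|v(t)\|_{H^1(\Omega)}^2 + \int_0^T\|v(t)\|_{H^2(\Omega)}^2\,dt \le C\big(\|v_0\|_{H^1(\Omega)},\,\|P_2 f\|_{L^2(0,T;L^2(\Omega))},\,T\big)
\end{equation*}
with $C$ continuous in its arguments, and then to prove this bound. For the reduction, apply $P_2$ to \eqref{eq:primequiv}; since $P_2\nabla_H\pi_s=0$ and $P_2\Delta v=A_2v$ on $D(A_2)$, the solution satisfies $\partial_t v = A_2v - F_2(v) + P_2f$ in $L^2(0,T;L^2_{\overline\sigma}(\Omega))$, where $F_2(v)=P_2(v\cdot\nabla_H v + w(v)\partial_z v)$. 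By the elliptic regularity of Proposition~\ref{prop:ellipticregularity}(a) (used with some $\lambda>0$ in the case $\Gamma_D=\emptyset$) the graph norm of $A_2$ is equivalent to $\|\cdot\|_{H^2(\Omega)}$ on $D(A_2)$, and by Lemma~\ref{lemma:Fp} (with $p=2$, $s=0$) together with the interpolation inequality $\|v\|_{H^{3/2}(\Omega)}^2\le C\|v\|_{H^1(\Omega)}\|v\|_{H^2(\Omega)}$ one has $\|F_2(v(t))\|_{L^2(\Omega)}\le C\|v(t)\|_{H^1(\Omega)}\|v(t)\|_{H^2(\Omega)}$; hence the displayed abstract equation controls $\|\partial_t v\|_{L^2(0,T;L^2)}$ and $\|v\|_{L^2(0,T;D(A_2))}$ in terms of the right-hand side of the energy bound, and the theorem (with $B$ continuous) follows once that bound is established.

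For $\Gamma_D\neq\emptyset$ the energy bound is, up to routine modifications, already contained in \cite{HieberKashiwabara2015,GaldiHieberKashiwabara2015} (the pure Dirichlet case being analogous), so the case requiring genuine work is $\Gamma_D=\emptyset$, which is the Cao--Titi situation \cite{CaoTiti2007}; I would carry it out in three steps, the identities below being formal and justified by a Galerkin approximation. In all of them the test functions turn out to be hydrostatically solenoidal --- for $v$ trivially, and for $-\partial_z^2 v$ and $-\Delta v$ because the Neumann condition $\partial_z v|_{\Gamma_u\cup\Gamma_b}=0$ forces their vertical averages to vanish --- so the forcing enters only through $P_2f$. \emph{(i)} Testing \eqref{eq:primequiv} with $v$: the pressure term drops ($\pi_s$ is independent of $z$, $\mathrm{div}_H\overline v=0$) and the convective term drops ($(v,w(v))$ is divergence free and $w(v)=0$ on $\Gamma_u\cup\Gamma_b$), giving $\tfrac12\tfrac{d}{dt}\|v\|_{L^2}^2+\|\nabla v\|_{L^2}^2=\langle P_2f,v\rangle$ and hence, by Gronwall and Young, bounds on $\|v\|_{L^\infty(0,T;L^2)}$ and $\|\nabla v\|_{L^2(0,T;L^2)}$. \emph{(ii)} Testing \eqref{eq:primequiv} with $-\partial_z^2 v$: the boundary terms vanish by the Neumann condition and the pressure term vanishes because $\partial_z\nabla_H\pi_s=0$, leaving $\tfrac12\tfrac{d}{dt}\|\partial_z v\|_{L^2}^2+\|\nabla\partial_z v\|_{L^2}^2$ equal to quadratic convective terms plus $-\langle P_2f,\partial_z^2 v\rangle$; estimating the convective terms by the anisotropic Ladyzhenskaya/Agmon inequalities of \cite[Section 3]{CaoTiti2007} (controlling $w(v)=-\int_{-h}^{\cdot}\mathrm{div}_H v\,d\xi$ by $L^\infty$ in the vertical variable and $L^2$, $L^4$ in the horizontal ones), absorbing $\|\nabla\partial_z v\|_{L^2}^2$, and applying Gronwall with $\int_0^T\|\nabla v\|_{L^2}^2\,dt<\infty$ from \emph{(i)} yields bounds on $\|\partial_z v\|_{L^\infty(0,T;L^2)}$ and $\|\nabla\partial_z v\|_{L^2(0,T;L^2)}$. \emph{(iii)} Splitting $v=\overline v+\widetilde v$ into barotropic and baroclinic parts, the Poincar\'e inequality in $z$ and \emph{(ii)} control $\widetilde v$ in $L^\infty(0,T;L^2)\cap L^2(0,T;H^1)$, while $\overline v$ solves a two-dimensional Navier--Stokes type system on $G$ with forcing $\overline f$ plus vertically averaged quadratic interactions of $\widetilde v$; two-dimensional energy estimates then give bounds on $\|\overline v\|_{L^\infty(0,T;H^1(G))}$ and $\|\overline v\|_{L^2(0,T;H^2(G))}$, and combining these with \emph{(ii)} and elliptic regularity on $\Omega$ produces the energy bound, with constants of the form polynomial times exponential in the data, hence continuous.

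The step I expect to be the main obstacle is \emph{(ii)}--\emph{(iii)}: the nonlinearity is critical because $w(v)$ costs one horizontal derivative of $v$, so naive Sobolev products do not close and one must use precisely the anisotropic interpolation inequalities of Cao--Titi, arranged moreover so that the forcing appears only as $P_2f\in L^2(0,T;L^2)$ --- which is why one tests with $-\partial_z^2 v$ rather than differentiating \eqref{eq:primequiv} in $z$, an operation that would require a vertical derivative of $f$. Everything else --- the reduction above and quoting the $\Gamma_D\neq\emptyset$ case --- is routine.
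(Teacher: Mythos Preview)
Your reduction from the maximal-regularity norm to the Cao--Titi energy bound is correct and coincides with what the paper does (it appears at the end of the paper's Step~4, using Lemma~\ref{lemma:Fp} and maximal regularity of $A_2$). Your handling of the case $\Gamma_D\neq\emptyset$ by quotation is also what the paper does.

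The genuine gap is in the Neumann case $\Gamma_D=\emptyset$: your step \emph{(ii)} does not close with only the information from \emph{(i)}. After testing with $-\partial_z^2 v$ and integrating by parts (using $v_z=0$ on $\Gamma_u\cup\Gamma_b$ and the cancellation of the transport term), the surviving nonlinear contributions are of the schematic form $\int_\Omega \nabla_H v\,|v_z|^2$. Splitting $v=\bar v+\tilde v$, the $\bar v$--part can indeed be absorbed using the 2D Ladyzhenskaya inequality and $\int_0^T\|\nabla_H\bar v\|_{L^2(G)}^2\,dt<\infty$ from \emph{(i)}. But the $\tilde v$--part, after one more integration by parts, leaves $\big\||\tilde v|\,|v_z|\big\|_{L^2(\Omega)}\,\|\nabla_H v_z\|_{L^2(\Omega)}$; every Sobolev/anisotropic splitting of the first factor produces either a Gronwall weight like $\|\nabla\tilde v\|_{L^2}^4$ (not in $L^1_t$ from \emph{(i)}) or an unabsorbable power of $\|\nabla v_z\|_{L^2}$. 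This is precisely why both Cao--Titi and the paper insert an intermediate step \emph{before} the $v_z$--estimate: in \cite{CaoTiti2007} it is the $L^6$ bound on $v$, in the paper it is Step~1, the bound $\tilde v\in L^\infty_t(L^4_x)$ together with $\int_0^T\big\||\tilde v|\,|\nabla\tilde v|\big\|_{L^2}^2\,dt<\infty$, obtained by testing the $\tilde v$--equation with $|\tilde v|^2\tilde v$. That quantity is exactly what is needed to close the $v_z$--estimate, and it also feeds the 2D estimate for $\nabla_H\bar v$ (the paper's Step~2). So the correct order is: energy $\to$ $\tilde v\in L^\infty_tL^4_x$ $\to$ $\nabla_H\bar v\in L^\infty_tL^2_x$ $\to$ $v_z\in L^\infty_tL^2_x$ $\to$ $\nabla v\in L^\infty_tL^2_x$; your order \emph{(i)}$\to$\emph{(ii)}$\to$\emph{(iii)} reverses the crucial first two nonlinear steps and therefore does not go through.

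A minor point: your justification that $-\Delta v$ is hydrostatically solenoidal because its vertical average vanishes is not correct (one has $\overline{-\Delta v}=-\Delta_H\bar v\neq 0$); the conclusion is nevertheless true since $\mathrm{div}_H(-\Delta_H\bar v)=-\Delta_H\,\mathrm{div}_H\bar v=0$.
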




\begin{proof}
In \cite{GaldiHieberKashiwabara2015} global a priori bounds in $L^{\infty}(0,T;H^1(\Omega))$ and $L^{2}(0,T;H^2(\Omega))$ have been derived for the case of mixed Dirichlet and Neumann boundary condition. 
	The case of pure Dirichlet boundary conditions can be treated similarly. Here, we supplement the corresponding proof for Neumann boundary conditions, where we even prove $L^{\infty}(0,T;H^2(\Omega))$-bounds.

	A standard procedure yields the energy equality: 
	\begin{equation} \label{eq: energy equality}
	\|v(t)\|_{L^2(\Omega)}^2 + 2\int_0^t \|\nabla v(s)\|^2 \, ds = \|a\|_{L^2(\Omega)}^2 + 2\int_0^t \int_{\Omega}f(s)\cdot v(s) \, ds.
	\end{equation}
We subdivide our proof into seven steps. The solution of \eqref{eq:primequiv} splits, compare \cite[(6.3) and (6.4)]{HieberKashiwabara2015}, into 
		\begin{align}
		\bar{v}_t-\Delta_H \bar{v}+ \nabla_H p &= - \bar{v}\cdot\nabla_H \bar{v} - \frac{1}{h}\int_{-h}^{0}   \tilde{v}\cdot\nabla_H \tilde{v} +  \mathrm{div}_H\tilde{v} \tilde{v} + \bar{f}, \qquad \mathrm{div}_H \bar{v}=0, \label{eq:vbar}\\
		\tilde{v}_t-\Delta \tilde{v} + \tilde{v}\cdot\nabla_H \tilde{v} + w\partial_z \tilde{v}&= - \bar{v}\cdot\nabla_H \tilde{v} - \tilde{v}\cdot\nabla_H \bar{v} + \frac{1}{h}\int_{-h}^{0}   \tilde{v}\cdot\nabla_H \tilde{v} +  \mathrm{div}_H\tilde{v} \tilde{v} + \tilde{f}. \label{eq:vtilde}
		\end{align}	
The proof presented below basically follows the steps of \cite[Section 6]{HieberKashiwabara2015}. However, the Neumann boundary condition make the proof different in two ways.
One is that the extra term $\partial_zv|_{\Gamma_D}$ appearing in the equations for $\bar v$ and $\tilde v = v - \bar v$ is now absent (see (6.3) and (6.4) of \cite{HieberKashiwabara2015}), 
and the other is that Poincar\'e's inequality for $v$ is no longer available. However, we still have 
		\begin{align}\label{eq:poincare}
		\|\tilde v\|_{L^2(\Omega)} \le h\|\partial_z \tilde{v}\|_{L^2(\Omega)}. 
		\end{align}
		
	\textbf{Step 1.}
	We derive an estimate for $\tilde v: = v - \bar v \in L^\infty_t(L^4_x)$.
As in \cite[(6.8)]{HieberKashiwabara2015} and \cite[Step 3]{GaldiHieberKashiwabara2015}, by multiplying \eqref{eq:vtilde} with $|\tilde v|^2\tilde v$ and integrating over $\Omega$, we 
obtain by integrating by parts 
	\begin{align*}
	&\frac14 \partial_t \|\tilde v\|_{L^4(\Omega)}^4 + \frac12 \big\|\nabla |\tilde v|^2 \big\|_{L^2(\Omega)}^2 + \big\| |\tilde v|\, |\nabla\tilde v| \big\|_{L^2(\Omega)}^2 \\
	=&\; -\int_\Omega (\tilde v\cdot\nabla_H\bar v) \cdot |\tilde v|^2\tilde v + \frac1h \int_\Omega \int_{-h}^0 (\tilde v\cdot\nabla_H\tilde v + \mathrm{div}_Hv \tilde v)\, dz \cdot |\tilde v|^2\tilde v + \int_{\Omega}\tilde{f} \cdot |\tilde v|^2\tilde v 
	=: I_1 + I_2.
	\end{align*}
We estimate
		\begin{align*}
I_1 \le&\; C\|\nabla_H\bar v\|_{L^2(G)} \|\tilde v\|_{L^4(\Omega)}^4 + C\|\nabla_H\bar v\|_{L^2(G)}^2 \|\tilde v\|_{L^4(\Omega)}^4 + \frac14 \big\|\nabla_H |\tilde v|^2 \big\|_{L^2(\Omega)}^2,
		\end{align*}
		and similarly to \cite[Section 4, Step 3]{GaldiHieberKashiwabara2015}
		\begin{align*}
		I_2 \le&\;  C (1 + \|\tilde{f} \|_{L^2(\Omega)}^2 )\|\tilde v\|_{L^4(\Omega)}^4  +\|\tilde{f} \|_{L^2(\Omega)}^{8/5}\|\tilde v\|_{L^4(\Omega)}^{12/5}   +  \frac14 \big\|\nabla_H |\tilde v|^2 \big\|_{L^2(\Omega)}^2
		\end{align*}
		as well as	
		\begin{align*}
		\|\tilde{f} \|_{L^2(\Omega)}^{8/5}\|\tilde v\|_{L^4(\Omega)}^{12/5} \leq C (1 + \|\tilde{v} \|_{L^4(\Omega)}^4 )\|\tilde{f} \|_{L^2(\Omega)}^2 + 1 + \|\tilde{v} \|_{L^4(\Omega)}^4.
		\end{align*}
Combining these estimates we obtain
		\begin{align*}
		\|\tilde v(t)\|_{L^4(\Omega)} 
		+ \int_0^t \big\| |\tilde v(s)|\, |\nabla\tilde v(s)| \big\|_{L^2(\Omega)}^2 \, ds \le& 
		\|\tilde a \|_{L^4(\Omega)}  +	C \int_0^t(1+\|\tilde{f} \|_{L^2(\Omega)}^2)  \\
		+& C\left(1+ \|\nabla_H\bar v\|_{L^2(G)}^2 + \|\tilde{f} \|_{L^2(\Omega)}^2\right)  	\|\tilde v(t)\|_{L^4(\Omega)} ds,
		\end{align*}
		and Gronwall's inequality yields
		\begin{align*}
		\|\tilde v(t)\|_{L^4(\Omega)} \le& 
		\left(\|\tilde a \|_{L^4(\Omega)}  +	C (t+ \int_0^t\|\tilde{f} \|_{L^2(\Omega)}^2) \right) \exp \left(C \int_0^t \left(1+ \|\nabla_H\bar v\|_{L^2(G)}^2 + \|\tilde{f} \|_{L^2(\Omega)}^2\right) ds \right),
		\end{align*}	 	
		where $\int_0^t\|\nabla_H\bar v\|_{L^2(G)}^2 ds$ is bounded by \eqref{eq: energy equality}

	This implies that there exists a continuous function $B_1 = B_1(\|a\|_{H^1(\Omega)})$ such that
	\begin{equation} \label{eq: L4 estimate for tilde v}
	\|\tilde v(t)\|_{L^4(\Omega)} 
	+ \int_0^t \big\| |\tilde v(s)|\, |\nabla\tilde v(s)| \big\|_{L^2(\Omega)}^2 \, ds \le B_1(\|a\|_{H^1(\Omega)}, \|f\|_{L^2(0,T;L^2(\Omega))}), \qquad  t\in [0,T].
	\end{equation}
	
	\textbf{Step 2.}
	We derive an estimate for $\nabla_H\bar v \in L^\infty_t(L^2_x)$.
	As in \cite[p.\ 1103]{HieberKashiwabara2015} we obtain
	\begin{equation*}
	8\partial_t \|\nabla_H\bar v\|_{L^2(G)}^2 + \|\Delta_H\bar v\|_{L^2(G)}^2 + \|\nabla_H\pi\|_{L^2(G)}^2 \le C\big\| |\bar v|\,|\nabla_H\bar v| \big\|_{L^2(G)}^2 + C\big\| |\tilde v|\,|\nabla_H\tilde v| \big\|_{L^2(\Omega)}^2 +\|\bar{f}\|_{L^2(G)}^2.
	\end{equation*}
	By an interpolation inequality, $\|\bar v\|_{H^1(G)} = \|\bar v\|_{L^2(G)} + \|\nabla_H\bar v\|_{L^2(G)}$ etc., and $\|\nabla^2_H\bar v\|_{L^2(G)} \le C\|\Delta_H\bar v\|_{L^2(G)}$, we obtain
	\begin{align*}
	\big\| |\bar v|\,|\nabla_H\bar v| \big\|_{L^2(G)}^2 &\le C\|\bar v\|_{L^4(G)}^2 \|\nabla_H\bar v\|_{L^4(G)}^2
	\le C\|\bar v\|_{L^2(G)} \|\bar v\|_{H^1(G)} \|\nabla_H\bar v\|_{L^2(G)} \|\nabla_H\bar v\|_{H^1(G)} \\
	&\le C(1 + \|\bar v\|_{L^2(G)}^2 + \|\bar v\|_{L^2(G)}^4)\|\nabla_H\bar v\|_{L^2(G)}^2 + C\|\bar v\|_{L^2(G)}^2 \|\nabla_H\bar v\|_{L^2(G)}^4 + \frac12 \|\Delta_H\bar v\|_{L^2(G)}^2.
	\end{align*}
	It then follows from Gronwall's inequality and \eqref{eq: L4 estimate for tilde v} that
	\begin{equation*}
	\|\nabla_H\bar v(t)\|_{L^2(G)}^2 + \int_0^t \|\nabla_H\pi(s)\|_{L^2(G)}^2\, ds \le B_2(\|a\|_{H^1(\Omega)},\|f\|_{L^2(0,T;L^2(\Omega))}), \qquad  t\in[0,T]
	\end{equation*}
	for some continuous function $B_2$.
	
	\textbf{Step 3.} We derive an estimate for $v_z := \partial_z v \in L^\infty_t(L^2_x)$.
	As in \cite[(6.6)]{HieberKashiwabara2015} testing with $-\partial_{z}^2 v$ we obtain
	\begin{align*}
	\frac12 \partial_t\|v_z\|_{L^2(\Omega)}^2 &+ \|\nabla v_z\|_{L^2(\Omega)}^2 = -\int_\Omega v_z\cdot\nabla_H\bar v\cdot v_z + \int_\Omega \mathrm{div}_Hv_z\tilde v\cdot\tilde v_z \\
	&\hspace{3cm} + \int_\Omega v_z\cdot\nabla_Hv_z\cdot \tilde v - 2\int_\Omega \tilde v\cdot\nabla_Hv_z\cdot v_z - \int_\Omega f \cdot v_{zz} \\
	&\le C\|\nabla_H\bar v\|_{L^2(G)}\|v_z\|_{L^4(\Omega)}^2 + C\|\nabla_Hv_z\|_{L^2(\Omega)} \big\| |\tilde v|\, |v_z| \big\|_{L^2(\Omega)} +\frac14\|v_{zz}\|_{L^2(\Omega)}^2 + \|f\|_{L^2(\Omega)}^2  \\
	&\le C\|\nabla_H\bar v\|_{L^2(G)}^4 \|v_z\|_{L^2(\Omega)}^2 + C\big\| |\tilde v|\, |\nabla\tilde v| \big\|_{L^2(\Omega)}^2 + \frac12\|\nabla v_z\|_{L^2(\Omega)}^2 + \|f\|_{L^2(\Omega)}^2,
	\end{align*}
	where we have used $\|v_z\|_{L^4(\Omega)} \le C\|v_z\|_{L^2(\Omega)}^{1/4} \|\nabla v_z\|_{L^2(\Omega)}^{3/4}$ (note that $v_z = 0$ on $\Gamma_u\cup\Gamma_b$) and $v_z = \tilde v_z$.
	It follows from \eqref{eq: L4 estimate for tilde v} that
	\begin{equation*}
	\|v_z(t)\|_{L^2(\Omega)}^2 + \int_0^t\|\nabla v_z(s)\|_{L^2(\Omega)}^2\, ds \le B_3(\|a\|_{H^1(\Omega)}, \|f\|_{L^2(0,T;L^2(\Omega))}), \qquad  t\in[0,T].
	\end{equation*}
	
	\textbf{Step 4.} We derive an estimate for $\nabla v \in L^\infty_t(L^2_x)$.
	As in \cite[(6.13)]{HieberKashiwabara2015} we obtain
	\begin{align}
	\partial_t\|\nabla v\|_{L^2(\Omega)}^2 + \|\Delta v\|_{L^2(\Omega)}^2 &\le C( \|\bar v\cdot\nabla_H\bar v\|_{L^2(G)}^2 + \|\bar v\cdot\nabla_H\tilde v\|_{L^2(\Omega)}^2 + \|\tilde v\cdot\nabla_H\bar v\|_{L^2(\Omega)}^2 + \|w\partial_zv_z\|_{L^2(\Omega)}^2 \notag \\
	&\hspace{1cm} + \|\tilde v\cdot\nabla_H\tilde v\|_{L^2(\Omega)}^2 + \|\nabla_H\pi\|_{L^2(G)}^2 +\|f\|_{L^2(\Omega)}^2). \label{eq1: L2 estimate for Dv}
	\end{align}
	In view of interpolation inequalities, elliptic regularity for $\Delta$, and anisotropic estimates, we may bound the first four terms on the right-hand side as
	{\allowdisplaybreaks\begin{align*}
		&\begin{aligned}
		\bullet\; \|\bar v\cdot\nabla_H\bar v\|_{L^2(G)}^2 &\le C\|\bar v\|_{L^2(G)} \|\bar v\|_{H^1(G)} \|\nabla_H\bar v\|_{L^2(G)} \|\nabla_H\bar v\|_{H^1(G)} \\
		&\le C\|\bar v\|_{L^2(G)} \|\bar v\|_{H^1(G)} (1 + \|\bar v\|_{L^2(G)} \|\bar v\|_{H^1(G)}) \|\nabla_H\bar v\|_{L^2(G)}^2 + \frac18 \|\Delta v\|_{L^2(\Omega)}^2,
		\end{aligned}
		\\
		&\begin{aligned}
		\bullet\; \|\bar v\cdot\nabla_H\tilde v\|_{L^2(\Omega)}^2 &= \|\bar v\cdot \nabla_H \tilde{v}\|_{L^2(\Omega)}^2 \le C\|\bar v\|_{L^6(G)}^2 \|\nabla_H\tilde{v}\|_{L^2(\Omega)} \|\nabla_H\tilde{v}\|_{H^1(\Omega)} \\
		&\le C\|\bar v\|_{H^1(G)}^2 \|\nabla v\|_{L^2(\Omega)} \|\partial_z\nabla_Hv\|_{L^2(\Omega)} \\
		&\le C\|\bar v\|_{H^1(G)}^2 \|\nabla v\|_{L^2(\Omega)}^2 + \frac18 \|\Delta v\|_{L^2(\Omega)}^2,
		\end{aligned}
		\\
		&\begin{aligned}
		\bullet\; \|\tilde v\cdot\nabla_H\bar v\|_{L^2(\Omega)}^2 \le C\|\tilde v\|_{L^4(\Omega)}^2 \|\nabla_H\bar v\|_{L^4(G)}^2 \le C\|\tilde v\|_{L^4(\Omega)}^2(1 + \|\tilde v\|_{L^4(\Omega)}^2)\|\nabla_H\bar v\|_{L^2(G)}^2 + \frac18 \|\Delta v\|_{L^2(\Omega)}^2,
		\end{aligned}
		\\
		&\begin{aligned}
		\bullet\; \|wv_z\|_{L^2(\Omega)}^2 &\le C\|w\|_{L^4_{xy}L^\infty_z} \|v_z\|_{L^4_{xy}L^2_z} \le C\|\mathrm{div}_H\,v\|_{L^2(\Omega)} \|\mathrm{div}_Hv\|_{H^1(\Omega)} \|v_z\|_{L^2(\Omega)} \|\nabla v_z\|_{L^2(\Omega)} \\
		&\le C\|\mathrm{div}_Hv\|_{L^2(\Omega)}^4\|v_z\|_{L^2(\Omega)}^2 + C\|\mathrm{div}_Hv\|_{L^2(\Omega)}^2\|v_z\|_{L^2(\Omega)}^2\|\nabla v_z\|_{L^2(\Omega)}^2 + \frac18 \|\Delta v\|_{L^2(\Omega)}^2.
		\end{aligned}
		\end{align*}
	} 	
	
	Combining these with \eqref{eq1: L2 estimate for Dv} and applying Gronwall's inequality, we conclude
	\begin{equation} \label{eq2: L2 estimate for Dv}
	\|\nabla v(t)\|_{L^2(\Omega)}^2 + \int_0^t \|\Delta v(s)\|_{L^2(\Omega)}^2\, ds \le B_4(\|a\|_{H^1(\Omega)}, \|f\|_{L^2(0,T;L^2(\Omega))}), \qquad  t\in[0, T].
	\end{equation}

	 Having now established $L^{\infty}(0,T;H^1(\Omega))$ and $L^{2}(0,T;H^2(\Omega))$-{a priori} bounds $B_4$ for the all boundary conditions \ref{eq:bc}, we conclude, by using maximal 
regularity of $A_2$ that
	 \begin{align*}
	 \norm{v}_{\mathds{E}_{1,1}(0,T)} \leq c \norm{(\partial_t - (A_2-\lambda)v}_{\mathds{E}_{0,1}(0,T)} \leq C \norm{F_2(v) - \lambda v + f}_{\mathds{E}_{0,1}(0,T)}, \quad \lambda>0,
	 \end{align*} 
	 and by using Lemma \ref{lemma:Fp}, interpolation inequality and H\"older's inequality 
	 \begin{align*}
\int_{0}^T \norm{F_p(v(s))}^2_{L^2(\Omega)} ds \leq C \int_{0}^T \norm{v(s)}_{H^1}^2\norm{v(s)}_{H^2}^2 ds \leq C \norm{v}^2_{L^{\infty}(0,T;H^1(\Omega))}\norm{v}^2_{L^{2}(0,T;H^2(\Omega))}\leq B_4^4=:B,
	 \end{align*}
	 an {a priori} bound in the maximal regularity space. 
	 
	 To derive in addition an $L^{\infty}(0,T;H^2(\Omega))$-bounds we proceed as follows.

	\textbf{Step 5.} We derive an estimate for $v_t := \partial_t v \in L^\infty_t(L^2_x)$.
	Taking the time derivative of \eqref{eq:primequiv}, multiplying by $v_t$, and integrating over $\Omega$, we obtain using the divergence free condition
	\begin{equation} \label{eq1: L2 estimate for vt}
	\frac12\partial_t\|v_t\|_{L^2(\Omega)}^2 + \|\nabla v_t\|_{L^2(\Omega)}^2 = - \int_\Omega (v_t\cdot \nabla_Hv + w_t\partial_zv) \cdot v_t + \int_\Omega f_t \cdot v_{t}.
	\end{equation}
	In view of interpolation inequalities and anisotropic estimates, the first two terms on the right-hand side may be bounded by
	\begin{equation*}
	\|\nabla_Hv\|_{L^2(\Omega)} \|v_t\|_{L^4(\Omega)}^2 \le C(\|\nabla_Hv\|_{L^2(\Omega)} + C\|\nabla_Hv\|_{L^2(\Omega)}^4) \|v_t\|_{L^2(\Omega)}^2 + \frac14\|\nabla v_t\|_{L^2(\Omega)}^2,
	\end{equation*}
	and by
	\begin{align*}
	&\|w_t\|_{L^2_{xy}L^\infty_z} \|v_z\|_{L^3_{xy}L^2_z} \|v_t\|_{L^6_{xy}L^2_z} \\
	\le\; &C\|\mathrm{div}_Hv_t\|_{L^2(\Omega)}\|v_z\|_{H^{1/3}(\Omega)} \|v_t\|_{H^{2/3}(\Omega)} \\
	\le\; &C\|\nabla v_t\|_{L^2(\Omega)} \|v_z\|_{L^2(\Omega)}^{2/3} \|\nabla v_z\|_{L^2(\Omega)}^{1/3} \|v_t\|_{L^2(\Omega)}^{1/3} \|v_t\|_{H^1(\Omega)}^{2/3} \\
	\le\; &C\|\nabla v_z\|_{L^2(\Omega)}^2\|v_t\|_{L^2(\Omega)}^2 + C\|v_z\|_{L^2(\Omega)}^{4} \|\nabla v_z\|_{L^2(\Omega)}^{2} \|v_t\|_{L^2(\Omega)}^{2} + \frac14\|\nabla v_t\|_{L^2(\Omega)}^2,
	\end{align*}
	respectively.  Therefore, integrating \eqref{eq1: L2 estimate for vt} with respect to $t$ and noting that $\|v_t(0)\|_{L^2(\Omega)} \le C(\|a\|_{H^2(\Omega)}^2 + \|a\|_{H^2(\Omega)})$ (see \cite[p.\ 1111]{HieberKashiwabara2015}), we conclude $\|v_t(t)\|_{L^2(\Omega)} \le B_6(\|a\|_{H^2(\Omega)}, \|f\|_{W^{1}(0,T;L^2(\Omega))}^2)$ for all $t\in[0, T]$.
	
	\textbf{Step 6.} We derive an estimate for $v_z \in L^\infty_t(L^3_x)$.
	As in \cite[p.\ 1109]{HieberKashiwabara2015} we obtain testing with $-\partial_z (|v_z|v_z)$, now assuming $f_z \in L^{2}(0,T;L^2(\Omega))$
	\begin{align*}
	&\frac13\partial_t\|v_z\|_{L^3(\Omega)}^3 + \frac49 \big\| \nabla|v_z|^{3/2} \big\|_{L^2(\Omega)}^2 + \big\| |v_z|^{1/2}|\nabla v_z| \big\|_{L^2(\Omega)}^2 \\
	=\; &-\int_\Omega v_z\cdot\nabla_Hv\cdot |v_z|v_z + \int_\Omega \mathrm{div}_Hv |v_z|^3 - \int_{\Omega} f\cdot\partial_z (|v_z|v_z) \\
	\le &C\|\nabla_Hv\|_{L^2(\Omega)}^4\|v_z\|_{L^3(\Omega)}^3 + \frac19 \big\| \nabla|v_z|^{3/2} \big\|_{L^2(\Omega)}^2 + \|f_z\|_{L^2(\Omega)}^2 + \|v_z\|_{L^4(\Omega)}^4,
	\end{align*}
where $\|v_z\|_{L^4(\Omega)}^4 \leq C (\|v_z\|_{L^3(\Omega)}^3 +1)(\|\nabla v_z\|_{L^2(\Omega)}^2 +1)$. Gronwall's inequality then implies $\|v_z(t)\|_{L^3(\Omega)} \le B_7(\|a\|_{H^2(\Omega)},  \|f_z\|_{L^{2}(0,T;L^2(\Omega))}^2)$ for all $t\in[0, T]$.
	
	\textbf{Step 7.} We now derive an estimate for $\nabla^2v \in L^\infty_t(L^2_x)$.
	As in \cite[p.\ 1111]{HieberKashiwabara2015} we have
	\begin{align*}
	\|\nabla^2v\|_{L^2(\Omega)} &\le C\|\Delta v\|_{L^2(\Omega)} \le C(\|v_t\|_{L^2(\Omega)} + \|v\cdot\nabla_Hv\|_{L^2(\Omega)} + \|w\partial_zv\|_{L^2(\Omega)}) + \|f\|_{L^2(\Omega)}^2 \\
	&\le C\|v_t\|_{L^2(\Omega)} + C\|v\|_{L^6(\Omega)}\|v\|_{W^{1,3}(\Omega)} + C\|w\|_{L^6(\Omega)}\|v_z\|_{L^3(\Omega)} + \|f\|_{L^2(\Omega)}^2 \\
	&\le C\|v_t\|_{L^2(\Omega)} + C\|v\|_{H^1(\Omega)}^3 + C\|\nabla v\|_{L^2(\Omega)} (\|v_z\|_{L^3(\Omega)} + \|v_z\|_{L^3(\Omega)}^3)  + \|f\|_{L^2(\Omega)}^2 + \frac12 \|\nabla^2v\|_{L^2(\Omega)},
	\end{align*}
	which implies the desired estimate
	\begin{equation} \label{eq: L2 estimate for DDv}
	\|\nabla^2v(t)\|_{L^2(\Omega)} \le B_8(\|a\|_{H^2(\Omega)}, \|f\|_{W^{1}(0,T;L^2(\Omega))}^2, \|f_z\|_{L^{2}(0,T;L^2(\Omega))}^2) \qquad \forall t\in[0, T].
	\end{equation}
	Combining \eqref{eq: energy equality}, \eqref{eq2: L2 estimate for Dv}, and \eqref{eq: L2 estimate for DDv}, we completed the proof.

\end{proof}

\subsection{Global well-posedness}

\begin{proof}[Proof of Proposition~\ref{thm:globl2}]
To prove assertion  (a) consider 
\begin{align*}
t_+(v_0):=\sup \{ T'>0 \colon \hbox{Equation \eqref{eq:primequiv} has a solution in  $\mathds{E}_{1,1}(0,T')$}\}.
\end{align*}
By Proposition~\ref{prop:loc} $t_+(v_0)>0$ and the solutions in $\mathds{E}_{1,1}(0,T')$ are unique. Indeed, if we assume that there are two solutions $v,v'\in \mathds{E}_{1,1}(0,T')$, 
then setting  
$$t_{1}(v_0):=\sup \{ s>0 \colon \norm{(v-v')(s)}_{X_{\gamma,1}} = 0\},$$ 
we see that  $t_{1}(v_0)>0$ by Proposition~\ref{prop:loc}. Further, by continuity, $\mathds{E}_{1,1}(0,T')\hookrightarrow C([0,T'];X_{\gamma,1})$ and the above supremum is attained. 
Assuming that $t_{1}(v_0)<T'$, again by Proposition~\ref{prop:loc}, the solution with new initial value at $t_{1}(v_0)$ is unique on some time interval, thus contradicting the assumption. 

Assume now, that $t_+(v_0) < T$. By Theorem~\ref{thm: a priori estimate} $\norm{v}_{\mathds{E}_{1,1}(0,T')}\leq B(\norm{v_0}_{H^1(\Omega)}, \norm{P_2 f}_{L^2(0,T;L^2(\Omega))}, t_+(v_0))$ for any $0<T'<t_+(v_0)$. Hence by Lemma~\ref{lemma:T} we have $v\in \mathds{E}_{1,1}(0,t_+(v_0))$. Since the trace in $\mathds{E}_{1,1}(0,t_+(v_0))$ is well-defined $v(t_+(v_0))$ can be taken as new initial value, thus extending the solution beyond $t_+(v_0)$ contradicting the assumption. Hence $t_+(v_0) = T$, and again combing Theorem~\ref{thm: a priori estimate} and Lemma~\ref{lemma:T} we have  $v\in \mathds{E}_{1,1}(0,T)$. This proves part (a).    

Assertion  (b) follows directly from Lemma~\ref{lemma:vt}.
\end{proof}

\begin{proof}[Proof of Theorem~\ref{thm:glob}]
By Proposition~\ref{prop:loc} there is a local solutions, which by Theorem~\ref{thm:time} (a) has additional time regularity, in particular $v \in H^{1,q}(\delta,T; D(A_p))\hookrightarrow C^0(\delta,T; D(A_p))$ for some $0<\delta\leq T'$ and $0<T'< T$. Now, using $v(T')$ as new initial value, and taking advantage of the embedding $D(A_q) \subset (L_{\overline{\sigma}}^2(\Omega),D(A_2))_{1/2,q}$ for $q\in [6/5,\infty)$ and the additional assumption $P_2 f \in W^{1,2}(\delta,T; L_{\overline{\sigma}}^2(\Omega))$ we obtain that $v$ is also an $L^2$ solution at least for $\delta>0$. This holds for $q\in [6/5, \infty)$, and for $q\in (1,6/5)$ this follows from a bootstrapping argument as in \cite[Section 6.2]{HieberKashiwabaraHussein2016}. By Proposition~\ref{thm:globl2} there exists a global $L^2$ solution with $v\in C_b(\delta,D(A_2))$. Then using Lemma \ref{lem:real} and classical embedding results, see e.g. \cite{Triebel}, we obtain
\begin{align*}
D(A_2) \hookrightarrow X_{\overline{\mu},q} \quad \hbox{for } 0 \leq \overline{\mu}-\mu < 2 - \tfrac{2}{p},
\end{align*}
and compactness of the embedding $X_{\overline{\mu},q} \hookrightarrow X_{\mu,q}$ for $1/p <\mu <\overline{\mu}<1$. 
Hence Theorem~\ref{thm:globex} applies since 
\begin{align*}
\norm{v}_{C_b (\delta,T;X_{\overline{\mu},q})} \leq C \norm{v}_{C_b (\delta,T; D(A_2))}, 
\end{align*}
and therefore the solution exists globally, that is for any $T>0$.
\end{proof}





\section{Concluding Remarks}\label{sec:concludingremark}

The maximal regularity approach uses the contraction mapping principle to construct local solutions with initial values being traces of functions in the maximal $L^q$-$L^p$-regularity class which here reads as $v_0\in B^{2/p}_{pq}$. Other methods to construct solutions for the primitive equations are the Fujita-Kato scheme as proposed in \cite{HieberKashiwabara2015} for initial values $v\in H^{2/p,p}$, and the Galerkin method as used originally in \cite{Guillenetall2001} giving initial values $v_0\in H^1$. Note that for $q=p=2$ all results agree, and for $p,q\geq 2$ one has $H^{2/p,p}\subset B^{2/p}_{pq}$.


Comparing the maximal regularity and the Fujita-Kato approach, we see that, by using the maximal regularity approach, various boundary conditions can be treated simultaneously in the same way. 
The efficiency of this approach becomes furthermore obvious, when studying further couplings adding to the complexity of the equations. For instance, adding non-constant 
temperature $\tau$ one considers
\begin{align*}
\left\{
\begin{array}{rll}
\partial_t v + v \cdot \nabla_H v + w \cdot \partial_z v - \Delta v + \nabla_H \pi_s+  \nabla_H \int_{-h}^z\tau(\cdot,\xi) d\xi  & = f  , &\text{ in } \Omega \times (0,T),  \\
\mathrm{div}_H \overline{v} & = 0, &\text{ in } \Omega \times (0,T),   \\
\partial_t \tau + v \cdot \nabla_H \tau + w \cdot \partial_z \tau - \Delta \tau & = g, &\text{ in } \Omega \times (0,T),
\end{array}\right.
\end{align*}
compare \cite{Lionsetall1992}, where the non-linearity 
$$F_p(v,\tau):=\left(P_p( v \cdot \nabla_H v + w \cdot \partial_z v +  \nabla_H \int_{-h}^z\tau(\cdot,\xi) d\xi), v \cdot \nabla_H \tau + w \cdot \partial_z \tau \right)$$
can be estimated as in \cite[Lemma 5.1]{HieberKashiwabaraHussein2016}, and local well-posedness and regularity results follow directly.

Recently, the coupling to moisture and its analysis has come into focus, see \cite{Cotietall2015, Hitmeieretall2016} and the references given therein. The equation for the moisture $q$ is of the type   
    \begin{align*}
    \partial_t q + v \cdot \nabla_H q + w \cdot \partial_z q - \Delta q  = h + F(v, \tau, q)
    \end{align*}   
with additional coupling term $F(v, \tau, q)$. In the model studied in \cite{Cotietall2015} $F(v, \tau, q)$ involves some Heaviside functions and it is treated using variational methods. In  \cite{Hitmeieretall2016} water vapor $q_v$, cloud water $q_c$ and rain water $q_r$ mixing ratios are coupled to the temperature and velocity equations where the coupling terms 
involve expressions of the form
$$\tau(q_r^+)^{\beta}(q_{vs} - q_v), \quad \beta \in (0,1], \quad q_r^+=\max\{0,q_r\}$$ 
for fixed saturation mixing ratio $q_{vs}$. 
For $\beta=1$ this is Lipschitz continuous, and hence maximal $L^q$-regularity can be used, while for $\beta<1$ other methods must be applied. 


On the other hand, the Fujita-Kato method is more flexibel in various  situations compared to the approach presented here. This approach allows to  include for example anisotropic spaces. 
Considering for simplicity the case of pure Neumann boundary conditions, where $A_p v = \Delta v$, we may split $e^{tA_p} = e^{t\Delta_H}\circ e^{t\Delta_z}$ into commuting semigroups 
generated by $\Delta_H= \partial_x^2 + \partial_y^2$ and $\Delta_z= \partial_z^2$. So, using the anisotropic estimate
\begin{align*}
\norm{F_p(v)}_{L^p(\Omega)} \leq \norm{v}_{H^{1,p}_zH^{1/p,p}_{xy}} \norm{v}_{L^{p}_zH^{1+1/p,p}_{xy}}, 
\end{align*}
and considering quantities of the form
\begin{align*}
K(v)(t)= \sup_{0<s<t}s^{1/2+1/2p} \norm{v(s)}_{H^{1,p}_zH^{1/p,p}_{xy}} \quad \hbox{and} \quad H(v)(t)= \sup_{0<s<t}s^{1/2+1/2p} \norm{v(s)}_{L^{p}_zH^{1+1/p,p}_{xy}}
\end{align*}
we may distribute time weights anisotropically which leads to initial values 
$$v_0 \in H_z^{1/p,p}H^{1/p}_{xy}\cap L_z^{p}H^{2/p}_{xy} \cap \lpso, \quad p\in (1,\infty),$$ 
which for $p=2$ is slightly better than the result presented here since $H^1(\Omega) \subset H_z^{1/2}H_{xy}^{1/2}\cap L_z^{2}H_{xy}^{1}$.


\begin{thebibliography}{40}


	

		
\bibitem{Amann}
H.~Amann.
\newblock {\em Linear and quasilinear parabolic problems. Vol. I Abstract linear theory} 
\newblock Birkh\"auser Boston, Inc., Boston, MA, 1995.
\newblock \doi{10.1007/978-3-0348-9221-6}


\bibitem{Angenent1990_2}
S.~B.~Angenent.
\newblock Nonlinear analytic semiflows. 
\newblock {\em Proc. Roy. Soc. Edinburgh Sect. A}, 115(1--2):91--107, 1990. 
\newblock \doi{10.1017/S03082105000245988}
	
	
\bibitem{Angenent1990}
S.~B.~Angenent.
\newblock Parabolic equations for curves on surfaces. {I}. {C}urves with {$p$}-integrable curvature. 
\newblock {\em Ann. of Math. (2)}, 132(3):451--483, 1990. 
\newblock \doi{10.2307/1971426}
	
\bibitem{Bergh1984}
J.~Bergh.
\newblock A non-linear complex interpolation result. 
\newblock In {\em Interpolation Spaces and Allied Topics in Analysis: Proceedings of the Conference held in Lund, Sweden, August 29 -- September 1, 1983} Springer Berlin Heidelberg:45--47, 1984. 
\newblock \doi{10.1007/BFb0099091}	
	

\bibitem{Can97}
M. Cannone.
\newblock A generalization of a theorem by {K}ato on {N}avier-{S}tokes equations.  
\newblock {\em Rev. Mat. Iberoamericana}, 13(3):515-541, 1997.
\newblock \doi{10.4171/RMI/229}

\bibitem{CaoTiti2007}
Ch.~Cao and E.~Titi.
\newblock Global well--posedness of the three-dimensional viscous primitive equations of large scale ocean and atmosphere dynamics.
\newblock {\em Annals of Mathematics}, 166:245--267, 2007.
\newblock \doi{10.4007/annals.2007.166.245}


\bibitem{Cotietall2015}
M.~Coti Zelati, A.~Huang, I.~Kukavica, R.~Temam and M.~Ziane.
\newblock The primitive equations of the atmosphere in presence of vapour saturation. 
\newblock  {\em Nonlinearity}, 28(3):625--668, 2015. 
\newblock \doi{10.1088/0951-7715/28/3/625}



\bibitem{ChillFasangova2009}
R.~Chill and E.~Fasangova.
\newblock {\em Gradient Systems}. Lecture Notes of the 13th International Internet Seminar.
\newblock Matfyzpress, Prague, 2010.
\newblock \url{https://www.math.tecnico.ulisboa.pt/~czaja/ISEM/13internetseminar200910.pdf}







\bibitem{EscherSimonett2003}
J.~Escher and G.~Simonett.
\newblock Analyticity of solutions to fully nonlinear parabolic evolution equations on symmetric spaces. 
\newblock  {\em J. Evol. Equ.}, 3(4):549--576 , 2003. 
\newblock \doi{10.1007/s00028-003-0093-z}




\bibitem{Farwigetall2017}
R.~Farwig, Y.~Giga and P.~Y.~Hsu.
\newblock The {N}avier-{S}tokes equations with initial values in {B}esov spaces of type $B^{-1+3/q}_{q,\infty}$.
\newblock {\em J. Korean Math. Soc.}, 54(5):1483--1504, 2017.
\newblock \doi{10.4134/JKMS.j160529}

\bibitem{Farwigetall2017b}
	R.~Farwig, Y.~Giga and P.~Y.~Hsu.
	\newblock {O}n the continuity of the solutions to the {N}avier-{S}tokes equations with initial data in critical Besov spaces.
	\newblock Preprint, 2017. 

\bibitem{Farwigetall2016}
R.~Farwig, Y.~Giga and P.~Y.~Hsu.
\newblock Initial values for the {N}avier-{S}tokes equations in spaces with weights in time.             
\newblock {\em Funkcial. Ekvac.}, 59(2):199--216, 2016.
\newblock \doi{10.1619/fesi.59.199}







\bibitem{GaldiHieberKashiwabara2015}
G.~P.~Galdi, M.~Hieber and T.~Kashiwabara.
\newblock Strong time-periodic solutions to the 3D  Primitive Equations subject to arbitrary large forces.             
\newblock {\em Nonlinearity}, 30(10):3979, 2017.
\newblock \doi{10.1088/1361-6544/aa8166}








\bibitem{GigaGriesHusseinHieberKashiwabara2016}
Y.~Giga, M.~Gries, A.~Hussein, M.~Hieber and T.~Kashiwabara.
\newblock Bounded $H^{\infty}$-Calculus for the Hydrostatic Stokes Operator on $L^p$-Spaces and Applications.             
\newblock {\em Proc. Amer. Math. Soc.}, 145(9):3865--3876, 2017.
\newblock \doi{10.1090/proc/13676}

\bibitem{GigaGriesHusseinHieberKashiwabara2017NN}
Y.~Giga, M.~Gries, A.~Hussein, M.~Hieber and T.~Kashiwabara.
\newblock The Primitive Equations in the scaling invariant space $L^{\infty}(L^1)$.             
\newblock Preprint \href{https://arxiv.org/abs/1710.04434}{arXiv:1710.04434}, 2017.

\bibitem{GigaGriesHusseinHieberKashiwabara2017DN}
Y.~Giga, M.~Gries, A.~Hussein, M.~Hieber and T.~Kashiwabara.
\newblock Dirichlet Neumann.             
\newblock Preprint 2017. 


\bibitem{GigaMiyakawa1985}
Y.~Giga and T.~Miyakawa.
\newblock Solutions in $L_r$ of the Navier-Stokes initial value problem.
\newblock {\em Arch. Ration. Mech. Anal.}, 89(3):267--281, 1985.
\newblock \doi{10.1007/BF00276875}





\bibitem{GigaSawada2003}
Y.~Giga and O.~Sawada.
\newblock On regularizing-decay rate estimates for solutions to the Navier-Stokes initial value problem.
\newblock In {\em Nonlinear analysis and applications: to V. Lakshmikantham on his 80th birthday. Vol. 1, 2}, Kluwer Acad. Publ., Dordrecht, 549--562, 2003.








\bibitem{Guillenetall2001}
F.~Guill\'en-Gonz\'alez, N.~Masmoudi and M.~A.~Rodr\'\i guez-Bellido.
\newblock Anisotropic estimates and strong solutions of the primitive
equations.
\newblock {\em Differential Integral Equations}, 14(11):1381--1408, 2001.
\newblock \url{http://projecteuclid.org/euclid.die/1356123030}.



\bibitem{HieberKashiwabara2015}
M.~Hieber and T.~Kashiwabara.
\newblock Global Strong Well--Posedness of the Three Dimensional Primitive Equations in $L^p$--Spaces.             
\newblock {\em Arch. Rational Mech. Anal.}, 221(3): 1077--1115, 2016.   
\newblock \doi{10.1007/s00205-016-0979-x}


\bibitem{HieberKashiwabaraHussein2016}
M.~Hieber, T.~Kashiwabara and A.~Hussein.
\newblock Global strong {$L^p$} well-posedness of the 3{D} primitive
equations with heat and salinity diffusion 
 \newblock {\em J. Differential Equations}, 261(12): 6950--6981, 2016.       
\newblock \doi{10.1016/j.jde.2016.09.010}














\bibitem{Hitmeieretall2016}
S.~Hittmeier, R.~Klein, J.~Li, E.~Titi.
\newblock Global well-posedness for passivly transported
nonlinear moisture dynamics with phase changes. 
\newblock {\em Nonlinearity}, 30(10):3676, 2017.
\newblock \doi{10.1088/1361-6544/aa82f1}






\bibitem{KoehnePruessWilke2010}
M.~K\"ohne, J.~Pr\"uss and M.~Wilke.
\newblock On quasilinear parabolic evolution equations in weighted
{$L_p$}-spaces.
\newblock {\em J. Evol. Equ.}, 10(2):443--463, 2010.
\newblock \doi{10.1007/s00028-010-0056-0}.







\bibitem{LeCronePruessWilke2014}
J.~LeCrone, J.~Pr\"uss and M.~Wilke.
\newblock On quasilinear parabolic evolution equations in weighted
{$L_p$}-spaces {II}.
\newblock {\em J. Evol. Equ.}, 14(3):509--533, 2014.
\newblock \doi{10.1007/s00028-014-0226-6}.

\bibitem{Lionsetall1992}
J.~L.~Lions, R.~Temam and Sh.~H.~Wang.
\newblock New formulations of the primitive equations of atmosphere and applications.
\newblock {\em Nonlinearity}, 5(2):237--288, 1992.
\newblock \url{http://stacks.iop.org/0951-7715/5/237}

\bibitem{Lionsetall1992_b}
J.~L.~Lions, R.~Temam and Sh.~H.~Wang.
\newblock On the equations of the large-scale ocean.
\newblock {\em Nonlinearity}, 5(5):1007--1053, 1992.
\newblock \url{http://stacks.iop.org/0951-7715/5/1007}

\bibitem{Lionsetall1993}
J.~L.~Lions, R.~Temam and Sh.~H.~Wang.
\newblock Models for the coupled atmosphere and ocean. ({CAO} {I},{II}).
\newblock {\em Comput. Mech. Adv.}, 1:3--119, 1993.


\bibitem{LiTiti2016}
J.~Li and E.~Titi.
\newblock Recent Advances Concerning Certain Class of Geophysical Flows.
\newblock In {\em Handbook of Mathematical Analysis in Mechanics of Viscous Fluids}, Springer International Publishing, 2016.
\newblock \doi{10.1007/978-3-319-10151-4_22-1}



\bibitem{LiTiti2015}
J.~Li and E.~Titi.
\newblock  Existence and uniqueness of weak solutions to viscous primitive equations for certain class of discontinuous initial data.
\newblock {\em SIAM J. Math. Anal.}, 49(1):1--28, 2017.
\newblock \doi{10.1137/15M1050513}









\bibitem{Masuda1980}
K.~Masuda.
\newblock On the regularity of solutions of the nonstationary {N}avier-{S}tokes equations. 
\newblock In {\em Approximation methods for {N}avier-{S}tokes problems ({P}roc.{S}ympos., {U}niv. {P}aderborn, {P}aderborn,} Lecture Notes in Math., Springer, Berlin, 771:360--370, 1980. 	
\newblock \doi{10.1007/BFb0086917}










\bibitem{Pruess2002}
J. Pr\"uss.
\newblock Maximal regularity for evolution equations in {$L_p$}-spaces. 
\newblock {\em Conferenze del Seminario di Matematica dell'Universit\`a di Bari}, 285:1--39, 2002. 


\bibitem{PruessSimonett2016}
J.~Pr\"uss and G.~Simonett.
\newblock {\em Moving interfaces and quasilinear parabolic evolution equations}.
\newblock Birkh\"auser/Springer, [Cham], 2016.
\newblock \doi{10.1007/978-3-319-27698-4}

\bibitem{PruessSimonett2004}
J.~Pr\"uss and G.~Simonett.
\newblock Maximal regularity for evolution equations in weighted {$L_p$}-spaces.
\newblock {\em Arch. Math. (Basel)}, 82(5):415--431, 2004.
\newblock \doi{10.1007/s00013-004-0585-2}.

\bibitem{PruessSimonettWilke2017}
J.~Pr\"uss, G.~Simonett and M.~Wilke.
\newblock Critical spaces for quasilinear parabolic evolution equations and applications. 
\newblock Preprint  \href{https://arxiv.org/abs/1708.08550}{arXiv:1708.08550}, 2017.


\bibitem{PruessWilke2016}
J.~Pr\"uss and M.~Wilke.
\newblock Addendum to the Paper ''On quasilinear parabolic evolution equations in weighted {$L_p$}-spaces {II}''.
\newblock {\em J. Evol. Equ.}, published electronically, 2017.
\newblock \doi{10.1007/s00028-017-0382-6}.













\bibitem{Triebel}
H.~Triebel.
\newblock {\em Theory of Function Spaces}.
\newblock (Reprint of 1983 edition) Springer AG, Basel, 2010.





\bibitem{Triebel1978}
H.~Triebel.
\newblock {\em Interpolation theory, function spaces, differential operators.} Second Edition
\newblock Johann Ambrosius Barth, Heidelberg 1978.








\end{thebibliography}
\end{document}